\newtheorem{theor}{Theorem}
\newtheorem{lemma}{Lemma}
\newtheorem{proposition}{Proposition}
\newtheorem{corollary}{Corollary}
\newtheorem{defin}{Definition}
\newtheorem{remark}{Remark}
\def\tsc#1{\csdef{#1}{\textsc{\lowercase{#1}}\xspace}}
\begin{document}
\let\WriteBookmarks\relax
\def\floatpagepagefraction{1}
\def\textpagefraction{.001}

\shorttitle{Stratified Sampling Algorithms for Machine Learning Methods in Solving Two-scale PDEs}

\shortauthors{Eddel El\'{i} Ojeda Avil\'{e}s, Daniel Olmos Liceaga, Jae-Hun Jung}

\title[mode=title]{Stratified Sampling Algorithms for Machine Learning Methods in Solving Two-scale Partial Differential Equations}





%
\author[1,2]{{Eddel El\'{i} Ojeda Avil\'{e}s}}[style=chinese,
                        orcid=0009-0006-7585-6745
                        ]
\ead{eddelojeda@gmail.com}

\author[1,2]{{Daniel Olmos Liceaga}}[style=chinese,
                        orcid=0000-0002-3536-2162
                        ]
\ead{daniel.olmos@unison.mx}
\author[2]{{Jae-Hun Jung}}[style=chinese,
                        orcid=0000-0001-7923-0674
                        ]

\cormark[1]


\ead{jung153@postech.ac.kr}


\credit{Conceptualization of this study, Methodology, Software}

\affiliation[1]{organization={Universidad de Sonora},
    addressline={Blvd. Luis Encinas y Rosales S/N}, 
    postcode={83000},
    city={Hermosillo}, 
    state={Sonora},
    country={M\'{e}xico}}




\affiliation[2]{organization={Pohang University of Science and Technology},
    city={Pohang},
    postcode={37673}, 
    country={Korea}}

\cortext[cor1]{Corresponding author}




\begin{abstract}
Partial differential equations (PDEs) with multiple scales or those defined over sufficiently large domains arise in various areas of science and engineering and often present problems when approximating the solutions numerically. Machine learning techniques are a relatively recent method for solving PDEs. Despite the increasing number of machine learning strategies developed to approximate PDEs, many remain focused on relatively small domains. When scaling the equations, a large domain is naturally obtained, especially when the solution exhibits multiscale characteristics. This study examines two-scale equations whose solution structures exhibit distinct characteristics: highly localized in some regions and significantly flat in others. These two regions must be adequately addressed over a large domain to approximate the solution more accurately. We focus on the vanishing gradient problem given by the diminishing gradient zone of the activation function over large domains and propose a stratified sampling algorithm to address this problem. We compare the uniform random classical sampling method over the entire domain and the proposed stratified sampling method. The numerical results confirm that the proposed method yields more accurate and consistent solutions than classical methods.
\end{abstract}



\begin{keywords}
Stratified sampling \sep Physics-Informed Neural Networks \sep Reaction-Diffusion equations \sep Vanishing gradient problem \sep Multi-scale partial differential equations
\end{keywords}


\maketitle


\section{Introduction}

\noindent Multiple spatial and temporal scales characterize numerous phenomena in science and engineering. Some examples of such phenomena are presented in climate and ocean sciences \cite{majda2008applied}, materials sciences \cite{fish2010multiscale}, thermal dynamics \cite{yu2002multiscale}, and other fields. The typical mathematical descriptions of such phenomena involve non-linear partial differential equations (PDEs) with multiple scales. This situation has led to the development of the multi-scale method or analysis, comprising various techniques employed to construct approximations of the solutions to problems that depend on different scales simultaneously \cite{hou2003numerical,kevorkian2012multiple}. However, numerically solving problems involving multiple scales is generally problematic. One approach involves introducing fast- and slow-scaling variables  and treating them as independent. While it is challenging, the recent development of strategies for machine learning methods enables the numerical approximation of solutions to such problems by training neural networks \cite{vadyala2022physics, glorot2010understanding, pascanu2013difficulty} that directly  simulate the given PDEs. 

This paper, examines two-scale equations whose solution structures have distinct characteristics: smooth and highly localized in some regions and significantly flat in other regions. The considered PDEs in this paper include the Fisher  equation, a model for the spatial and temporal spread of an advantageous allele in a one-dimensional medium \cite{fisher1937wave}, and the Zeldovich equation, which describes the propagation of a flame in the combustion and detonation of gases \cite{zeldovich1959theorv,gilding2004travelling}. The solutions to these equations are highly localized in small regions and nearly flat in most areas. Due to the localization of the solution, the computational domain could be defined in a small area with numerous sampling points. However, the numerical solution becomes highly sensitive to the boundary values because of the highly nonlinear terms in the equation although the solution changes slowly near the truncated boundaries. Simply imposing exact boundary values at the truncated domain boundaries is insufficient. Thus, defining a sufficiently large computational domain is necessary so that the solution away from the localized area becomes flat enough to minimize the boundary effects.

Consequently, the computational complexity naturally increases for direct numerical simulation. A more serious problem arises when solving problems with neural networks using activation functions, such as the sigmoid function. In the long-range region where the solution is flat with  a very small derivative, the gradients calculated in these areas lead to vanishing gradient problems, resulting in the slow decay of the loss function or no further decay. The two-scale solutions of the Fisher and Zeldovich equations are typically represented using traveling wave solutions. The vanishing gradients of the solution in most areas of the domain of interest result in incorrect wave speeds of the obtained traveling wave solutions, as demonstrated in this paper. Therefore, although the solution is sought in a large domain to avoid truncated boundary effects and the sampling resolution over the entire domain increases, the quality of the solution is not improved due to the intrinsic characteristics of the neural network. Scaling the Fisher equation is a typical solution technique, but the  mentioned problems remain.

This paper aims to propose a novel sampling method that optimizes the weights of a neural network, serving as a numerical approximation for PDEs with two-scale solutions. Various weight-optimization methods for neural networks solve PDEs efficiently \cite{le2015simple, van2022optimally, ioffe2015batch}. However, when PDEs are solved with neural networks over significantly large computational domains, particularly for multi-scale solutions, the optimization becomes challenging. This paper focuses on developing a method to address problems when solving PDEs over relatively large domains. These challenges result from various factors, including the time sensitivity of the solutions and the vanishing gradient problem encountered while optimizing neural network parameters using gradient-based methods for backpropagation \cite{hochreiter1998vanishing, plaut1986experiments}. The effects of increasing the sample size by rescaling the system to larger domains must be examined. Increasing the domain by scaling the equation implies that the samples used during optimization must be larger to guarantee that points in the sample exist outside the regions where the gradient of the function to be optimized is not negligible. This approach is because the optimization process is a gradient-based method applied to a function representing an error measure known as the loss function \cite{wang2020comprehensive}. Only samples with the same density of points per unit of volume were considered to make a fair comparison between the solutions in the numerical examples for an equation before and after scaling. However, this requirement is unnecessary.

To address such problems, we propose a stratified sampling (SS) algorithm to mitigate the vanishing gradient problem by identifying the active and diminishing gradient zones of the activation functions within large domains. The basic idea of the proposed method is to deal with the active and diminishing gradient zones of the loss function differently. The development of sampling techniques such as stratified \cite{liberty2016stratified, may2010data}, systematic \cite{xu2018splitting}, cluster \cite{fraboni2021clustered} and adaptive \cite{hanna2022residual, zeng2022adaptive, wu2023comprehensive, mao2023physics} sampling have emerged as an alternative to uniform random sampling in order to improve the training process of the network. In this paper, uniform random sampling is called classical sampling technique, used as a basis during the analysis of the results obtained by the proposed sampling technique.

This paper consists of the following sections. Section 2 explains the basic methodology of neural networks as a numerical solution to the PDE defined with the initial and boundary conditions \cite{raissi2019physics, cho2021traveling}. Next, Section 3 details the vanishing gradient problem \cite{hochreiter1998vanishing} and its relationship with the diminishing gradient zone \cite{deshpande2018artificial}, which is the primary source of the issues for the problem defined over large domains. This consideration leads to the sampling problem. The second part of this section includes theorems to determine the active and diminishing gradient zones of a trial solution given by a neural network. Then, Section 4 proposes a proper sampling method for training the neural network. The proposed strategy is based on the SS algorithm that first identifies the regions where samples are selected and creates a uniform random sample of points within them \cite{liberty2016stratified}. Further, Section 5 provides numerical examples of three PDEs, called the advection, Fisher, and Zeldovich equations \cite{vadyala2022physics, fisher1937wave, zeldovich1959theorv}, to validate the proposed method and provides comparative studies between the typical uniform and proposed SS methods. Finally, Section 6 concludes with a brief remark on future research. \ref{AppendixA} presents the pseudocode of the proposed sampling method.


\section{Neural Networks for PDEs}
The universal approximation theorem states that neural networks can approximate any continuous function with arbitrary precision if no constraints are placed on the width and depth of the hidden layers \cite{cybenko1989approximation, hornik1989multilayer}. This section explains the basic neural networks for solving PDEs based on the framework of physics-informed neural networks \cite{raissi2019physics}.

In the physics-informed neural network setting, we take the spatiotemporal grid points $(x,t)\in\mathbb{R}^{n}\times\mathbb{R}$ as input for the network and let $U_T(x,t) : \mathbb{R}^{n}\times\mathbb{R} \rightarrow \mathbb{R}$ be a trial solution of the PDE obtained directly as network output. Unless stated otherwise, the spatial variable $x$ is defined in one dimension, although the generalization extends to $n$ dimensions. For the numerical examples, we focus on one-dimensional problems and consider the typical structure of the network \cite{raissi2019physics, cho2021traveling} determined by the following recurrence equations:

\begin{equation} 
\label{NNs}
\left\lbrace \begin{split}
& N_{0}^{(1)}(x,t)=x, \  N_{0}^{(2)}(x,t)=t,\\
& N_{k}^{(j)}(x,t)=\sigma\left( \sum_{i=1}^{h_{k-1}}\omega_{k}^{(j,i)}N_{k-1}^{(i)}(x,t)+b_{k}^{(j)}\right) \ \ \mathrm{for} \ j=1,\ldots ,h_{k} \ \mathrm{and} \ \ k=1,\ldots,F,\\
& N_{F+1}^{(j)}(x,t)=\sum_{i=1}^{h_{F}}\omega_{F+1}^{(j,i)}N_{F}^{(i)}(x,t)+b_{F+1}^{(j)} \ \ \mathrm{for} \ j=1,\ldots ,h_{F+1},\\
& U_{T}(x,t) = \left\lbrace N_{F+1}^{(j)}(x,t) \right\rbrace_{j=1}^{h_{F+1}},
\end{split} \right.
\end{equation}
where $N_{k}^{(j)}(x,t)$ denotes the $j$-th neuron in the $k$-th layer as a function of the input $(x,t)\in\mathbb{R}\times\mathbb{R}$, $h_{k}$ denotes the number of neurons in the $k$-th layer, $\sigma$ represents the activation function, and $\omega_{k}^{(j,i)}$ and $b_{k}^{(j)}$ are the weights and bias that have an impact on the neuron $N_{k}^{(j)}(x,t)$ for each hidden layer $L_{k}$, respectively. 
By definition, the $0$-th layer $N_{0}$ represents the input layer for $x$ and $t$ with two neurons for $x$ $\left(=N_0^{(1)}\right)$ and $t$ $\left(=N_0^{(2)}\right)$, respectively.

Any sigmoid function can be used as an activation function. This paper applies the sigmoid function $\sigma(x)$  given by the logistic function $\sigma(x) = \frac{1}{1+e^{-x}}$. The proposed SS algorithm can be extended to nonsigmoid functions, such as the rectified linear units ReLU$(x)$, LeakyReLU$(x)$, and ReQU$(x)$ functions \cite{ackermann2023deep, lei2022solving}.

The neural network is trained by minimizing the loss function. How the loss function is defined is critical for the performance of the neural network in solving PDEs. In this paper, the loss function used for each $U_{T}$, denoted by
$Loss_{Total}\left(U_{T},S\right)$, is defined as the sum of the residues of the expected values of the PDE and the initial and boundary
conditions over all points in a random sample set $S$ within a domain $\Omega_{x}\times\Omega_{t}\subset\mathbb{R}\times\mathbb{R}$, where $\Omega_{x}$ and $\Omega_{t}$ denote the spatial and temporal domains, respectively \cite{raissi2019physics, wang2020comprehensive}. We consider the partition of $S$ by the initial ($S_{IC}$), boundary ($S_{BC}$), and interior ($S_{PDE}$) domains. 
\begin{equation}
\label{partS}
S_{IC} = S\cap \left(\Omega_{x}\times\partial\Omega_{t}\right), \ S_{BC} = S\cap \left(\partial\Omega_{x}\times\Omega_{t}\right), \  S_{PDE} = S\cap \left(\Omega_{x}^{\mathrm{\circ}}\times\Omega_{t}^{\mathrm{\circ}}\right),
\end{equation}
where $\Omega_{x}^{\mathrm{\circ}}$ and $\Omega_{t}^{\mathrm{\circ}}$ denote the interior spatial and temporal domains, respectively.

We let $Loss_{IC}$, $Loss_{BC}$ and $Loss_{PDE}$ be the loss functions corresponding to $S_{IC}$, $S_{BC}$ and $S_{PDE}$, respectively, 
with the mean squared error (MSE) between the expected value of $U(x,t)$ and $U_T(x,t)$  on each domain. We take the total loss $Loss_{Total}$ as the sum of all losses in subdomains as follows: 
\begin{align*}
\label{Loss_short}
Loss_{Total} = Loss_{IC}+Loss_{BC}+Loss_{PDE}.
\end{align*}

This work considers PDEs that yield {\color{black}highly localized} solutions defined in a small region within a larger  domain. For such solutions the boundary conditions are critical. The numerical solutions near the truncated domain boundaries are highly sensitive to the boundary values imposed when the given PDEs contain highly nonlinear terms. For the one-dimensional problem, $\Omega_{x}\times\Omega_{t}=\left[ x_{L},x_{R}\right]\times\left[ t_{0},t_{f}\right]\subset\mathbb{R}\times\mathbb{R}$, proper values must be imposed at both boundaries, left and right, although they are not necessarily physical boundary conditions. 
For this case, we further split the loss function into $S_{LBC}=\left\lbrace x_{L}\right\rbrace\times\Omega_{t}$ and $S_{RBC}=\left\lbrace x_{R}\right\rbrace\times\Omega_{t}$. Then, the total loss, $Loss_{Total}$, is given by
\begin{equation}
\label{Loss}
Loss_{Total} = Loss_{IC}+Loss_{LBC}+Loss_{RBC}+Loss_{PDE}.
\end{equation}

Figure \ref{fig1} depicts a schematic representation of the overall architecture of the model used in the optimization process of the loss function $Loss_{Total}$ performed during network training. Given the nature of the equations in this paper, we only consider $U_{T}:\mathbb{R}^{2}\rightarrow\mathbb{R}$ (i.e., $h_{F+1}=1$). In this representation, $(x,t)$ is input first in the network, and the corresponding operations are performed on each path through the network to obtain the trial solution $U_{T}(x,t)$ \cite{lu2019deeponet}. The loss functions for the initial condition, boundary conditions, and differential equation are calculated from $U_{T}(x,t)$. Finally, the total loss function is obtained by taking the sum of the previously calculated loss functions. \\ \newline

\begin{figure}[h]
	\centering
        \includegraphics[scale=.8]{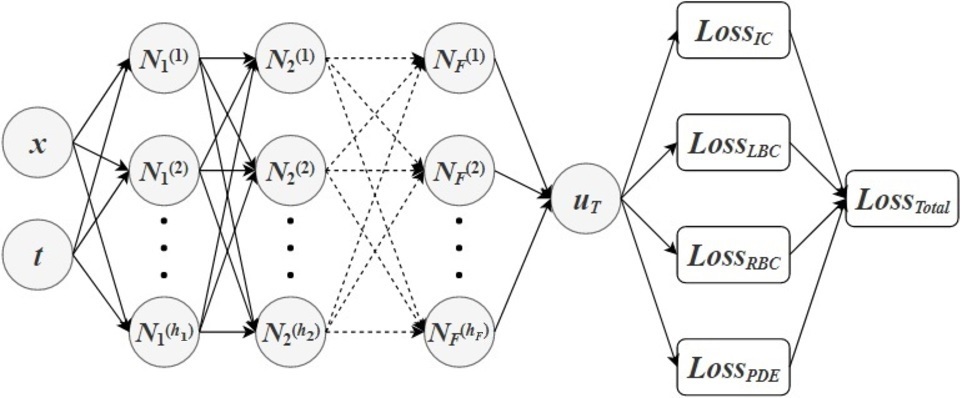}
	\caption{Schematic representation of the overall architecture of the model, considering the trial solution $U_{T}$ as a function of the input $(x,t)$ obtained from the neural network of $F$ hidden layers.}
	\label{fig1}
\end{figure}

For the next part in the optimization process, known as back-propagation, it is necessary to compute all partial derivatives of the loss function $Loss_{Total}$, defined by Eq. (\ref{Loss}), with respect to the weights $\omega_{k}^{(j_{k},i_{k-1})}$ and biases $b_{k}^{(j_{k})}$. In this paper we use automatic differentiation for all such partial derivatives and apply the Adam optimizer to update  $\omega_{k}^{(j_{k},i_{k-1})}$ and $b_{k}^{(j_{k})}$: 
\begin{equation}
\label{optimadampde}
\left\lbrace \omega_{k}^{(j_{k},i_{k-1})},b_{k}^{(j_{k})}\right\rbrace_{k=1}^{F+1} \leftarrow \mathrm{AdamOptimizer} \left( \left\lbrace \omega_{k}^{(j_{k},i_{k-1})},b_{k}^{(j_{k})},\frac{\partial Loss_{Total}}{\partial \omega_{k}^{(j_{k},i_{k-1})}},\frac{\partial Loss_{Total}}{b_{k}^{(j_{k})}} \right\rbrace_{k=1}^{F+1} ,\eta\right),
\end{equation}
where $\eta$ is the learning rate used as the coefficient for the gradient of $Loss_{Total}$ with respect to $\left\lbrace \omega_{k}^{(j_{k},i_{k-1})},b_{k}^{(j_{k})}\right\rbrace_{k=1}^{F+1}$. 

For the experiments, we applied the value of $\eta\leq 1$. Moreover, to reduce the time required to train the network, we took the value of $\eta$ as a function of the loss by allowing larger (e.g., $10^{-3}$ for the Adam optimizer) or smaller ($10^{-4}$) $\eta$ values while modifying the network parameters, depending on the distance to their optimal values that minimize $Loss_{Total}$. The distance to the optimal value is not known in general; hence, the absolute gradient of the loss function is employed as an indicator of the distance. Although it is possible to code the algorithm so that it automatically chooses the best $\eta$ value, due to the expected computational cost, we chose the $\eta$ value from the set of predefined values (e.g., between $10^{-4}$ and $10^{-3}$), depending on the condition based on the gradient magnitude of $Loss_{Total}$. Additionally, for initializing the network weights, we use the same optimizer with the error due to the initial condition as shown below:
\begin{equation}
\label{optimadamic}
\left\lbrace \omega_{k}^{(j_{k},i_{k-1})},b_{k}^{(j_{k})}\right\rbrace_{k=1}^{F+1} \leftarrow \mathrm{AdamOptimizer} \left( \left\lbrace \omega_{k}^{(j_{k},i_{k-1})},b_{k}^{(j_{k})},\frac{\partial Loss_{IC}}{\partial \omega_{k}^{(j_{k},i_{k-1})}},\frac{\partial Loss_{IC}}{b_{k}^{(j_{k})}}\right\rbrace_{k=1}^{F+1} ,\eta\right). 
\end{equation}


\section{Vanishing Gradients}

One of the primary challenges of using machine learning methods to approximate numerical solutions of differential equations is the vanishing gradient problem \cite{hochreiter1998vanishing}, which occurs while training artificial neural networks with gradient-based algorithms and backpropagation. This problem is crucial when solving the two-scale PDEs considered in this work, such that highly sharp solutions are defined in a small region within a larger domain. The solution to this problem is highly localized, whereas it is flat in most other regions in the domain. 

In each epoch, the neural network parameters are updated for the partial derivatives of the loss function. Thus, if some of the derivatives in the gradient of the loss function are sufficiently small, one or several network parameters may cease to update their values, potentially leading to the worst-case scenario that the neural network is hindered from further training. This situation becomes increasingly likely when using backpropagation to compute the gradient of the loss function \cite{plaut1986experiments,montavon2019layer}. As the algorithm relies on applying the chain rule layer by layer, it inherently multiplies the values of the partial derivatives acquired in each layer to calculate the gradient of the loss function. Therefore, the resulting partial derivative becomes exponentially small as it passes through multiple layers, where its derivative is calculated to be quite small. This behavior that affects the first network layer to a greater extent is known as the vanishing gradient problem \cite{pascanu2013difficulty, hochreiter1998vanishing}. A higher number of network layers results in a greater effect on the weights of the first few layers \cite{hanin2018neural}. This problem emerges when using a sigmoid function as an activation function due to the characteristics of its derivatives. Specifically, this occurs with the logistic function employed in this work. As stated, this problem is not exclusive to the activation function in this study. Thus, substituting the activation function with another sigmoid function does not solve this problem.

Several approaches have been developed to address this problem in updating the network parameters by increasing the loss function value. In addition to employing an activation function without this problem \cite{kong2017hexpo}, some proposed techniques involve modifying the loss function by assigning a greater or lesser weight to the error from the trial solution value concerning the initial and boundary conditions or differential equations. Different factors can be considered in these techniques, such as the current parameter values, network structure, selected samples \cite{van2022optimally}, initial weights, and preprocessed data \cite{glorot2010understanding, le2015simple,squartini2003preprocessing}. Other alternatives are to determine which points of the selected samples generate a high value when evaluated at the loss gradient and ignore sample points that do not meet this condition \cite{mo2021quantifying} or use batch normalization through layer inputs when recentering and rescaling \cite{ioffe2015batch}. Although these methods have demonstrated promising results, none can entirely prevent this problem. Therefore, it remains relevant to create alternative techniques to prevent the vanishing gradient problem or minimize the effect as much as possible. This work aims to mitigate this problem and improve the training process for solutions based on the SS techniques.

The proposed SS algorithms aim to identify the domain regions where the gradient of the loss function is large enough in the first hidden layer. We focus on this layer because it is the one most affected by the vanishing gradient problem, causing its values to be more prone to remain unchanged throughout training. In addition, due to the network structure, its output can be interpreted as a function of traveling waves of constant speed given by the neurons in the first layer. Therefore, improving the training of this layer allows us to identify sharp solutions better. In that sense, we propose algorithms guaranteeing that the parameters most affected by the vanishing gradient problem $\left(\omega_{1}^{(j,i)} \ \mathrm{and} \ b_{1}^{(j)}\right)$ remain updated during training. Moreover, the remaining parameters are expected to optimize their values similarly to the way they normally would.

To better understand this idea, it is necessary to study the role of each neuron in the optimization process. The proposed optimization process consists of two stages: (i) the network parameter update stage, considering only the errors from the initial condition, and (ii) the optimized weight stage to re-optimize weights, simultaneously considering errors from the initial and boundary conditions and the given PDE. The first stage approximates a univariate function with a composition of sigmoid functions. Thus, we must analyze the characteristics presented in a trial solution for cases where it is desired to approximate this type of function before studying the general case of the second stage of optimization.

\subsection{Preliminaries}
The necessary preliminaries are provided for the analysis. Some definitions and propositions are based on information from previous work \cite{rennie1969stirling,broder1984r,minai1993derivatives}.

\begin{defin}
\label{Defsn}
The number of ways to partition a set of $m$ distinguishable objects into $n$ non-empty subsets, with $n\leq m$, is called the Stirling number of the second kind (or Stirling partition number), and is denoted by $S(m ,n)$. In particular, the value of $S(m,n)$ can be calculated as
\begin{equation}
\label{S_mn}
S(m,n)=\frac{1}{n!}\sum_{k=0}^{n}(-1)^{k}{n \choose k}(n-k)^{m}.
\end{equation}
\end{defin}

Note that from Definition \ref{Defsn} it follows that $S(m,n)\geq 0$ for any $m\ge 0$ and $n\ge 0$ with $n \le m$. Also it follows that $S(m,1)=S(m,m)=1$ for any value of $m\in\mathbb{N}$. By definition, we also have  $S(m,n) = 0$ when $n = 0$ and $m\ge 1$ and $S(m,n) = 1$ when $n = m = 0$. \\ 

\begin{proposition}
\label{prop1}
Given $m,n\in\mathbb{N}$, with $m\leq n$, the Stirling numbers of the second kind obey the following recurrence relation:
\begin{equation}
\label{propsmn}
S(m+1,n)=nS(m,n)+S(m,n-1).
\end{equation}
\end{proposition}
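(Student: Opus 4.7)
The plan is to prove the recurrence by algebraic manipulation of the closed-form expression given in Definition \ref{Defsn}, since the paper takes that formula (rather than the combinatorial interpretation) as the definition of $S(m,n)$. First I would write out the right-hand side $nS(m,n)+S(m,n-1)$ by substitution:
\begin{align*}
nS(m,n) &= \frac{1}{(n-1)!}\sum_{k=0}^{n}(-1)^{k}\binom{n}{k}(n-k)^{m},\\
S(m,n-1) &= \frac{1}{(n-1)!}\sum_{k=0}^{n-1}(-1)^{k}\binom{n-1}{k}(n-1-k)^{m},
\end{align*}
and then reindex the second sum by $k \mapsto k-1$ so that both sums are in terms of the factor $(n-k)^{m}$ and run (after absorbing a vanishing boundary term when needed) over a common index range.

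The key algebraic step will be on the left-hand side: split the factor $(n-k)^{m+1}=n(n-k)^{m}-k(n-k)^{m}$ in the expansion
$$n!\,S(m+1,n)=\sum_{k=0}^{n}(-1)^{k}\binom{n}{k}(n-k)^{m+1},$$
and apply the standard binomial identity $k\binom{n}{k}=n\binom{n-1}{k-1}$. Dividing through by $n$, the first piece produces $\sum_{k}(-1)^{k}\binom{n}{k}(n-k)^{m}$, matching the expansion of $nS(m,n)$, while the second piece becomes $\sum_{k\ge 1}(-1)^{k-1}\binom{n-1}{k-1}(n-k)^{m}$, which matches the reindexed expansion of $S(m,n-1)$. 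Collecting terms then yields the claimed identity.

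The main obstacles are mostly bookkeeping rather than conceptual: I need to be careful about the endpoints of the summations after reindexing (in particular, the $k=n$ term in the shifted sum, where the factor $(n-k)^{m}=0^{m}$ vanishes for $m\ge 1$ and must be handled by the conventions recorded right after Definition \ref{Defsn}), and I need to verify the boundary cases $n=1$ and $n=m+1$ separately so that the identity holds at the extremes where $S(m,n-1)$ or $S(m,n)$ collapses to $0$ or $1$. A second, cleaner option would be to give a combinatorial proof (decide whether the element $m+1$ forms its own block, contributing $S(m,n-1)$, or joins one of $n$ existing blocks, contributing $nS(m,n)$), and I would include this as a brief remark, but since the Stirling numbers in this paper are defined through the explicit sum in Eq.~\eqref{S_mn}, the algebraic derivation above is the most self-contained route to present.
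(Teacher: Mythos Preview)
Your proposal is correct and follows essentially the same route as the paper: start from the closed form for $S(m+1,n)$, split $(n-k)^{m+1}=n(n-k)^{m}-k(n-k)^{m}$, apply $k\binom{n}{k}=n\binom{n-1}{k-1}$, and reindex the second sum to recognize $S(m,n-1)$. The paper carries the $\tfrac{1}{n!}$ prefactor throughout rather than clearing denominators and then dividing by $n$, and it does not separately treat the boundary cases $n=1$, $n=m+1$ or the $k=n$ endpoint (the algebra simply goes through), so your extra bookkeeping and the combinatorial remark are additions beyond what the paper presents, but the core argument is the same.
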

\begin{proof}
Let $m,n\in\mathbb{N}$, with $m\leq n$, be arbitrary. From Eq. (\ref{S_mn}) it follows that
\begin{align*}
S(m+1,n)&=\frac{1}{n!}\sum_{k=0}^{n}(-1)^{k}{n \choose k}(n-k)^{m+1}\\
&=\frac{n}{n!}\sum_{k=0}^{n}(-1)^{k}{n \choose k}(n-k)^{m}+\frac{1}{n!}\sum_{k=0}^{n}(-1)^{k+1}k{n \choose k}(n-k)^{m}\\
&=nS(m,n)+\frac{1}{n!}\sum_{k=1}^{n}(-1)^{k-1}\frac{n!}{(k-1)!(n-k)!}(n-k)^{m}\\
&=nS(m,n)+\frac{1}{(n-1)!}\sum_{k=1}^{n}(-1)^{k-1}\frac{(n-1)!}{(k-1)![n-1-(k-1)]!}[n-1-(k-1)]^{m}\\
&=nS(m,n)+\frac{1}{(n-1)!}\sum_{k=1}^{n}(-1)^{k-1}{n-1 \choose k-1}[n-1-(k-1)]^{m}\\
&=nS(m,n)+\frac{1}{(n-1)!}\sum_{k=0}^{n-1}(-1)^{k}{n-1 \choose k}(n-1-k)^{m}\\
&=nS(m,n)+S(m,n-1).
\end{align*}
\end{proof}

\begin{corollary}
\label{cor1}
Given $k,m,n\in\mathbb{N}$, with $k\leq m\leq n$, it follows that $S(m,k)\leq S(n,k)$.
\end{corollary}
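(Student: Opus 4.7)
The plan is to prove the corollary by induction on $n \geq m$, with $m$ and $k$ held fixed, leveraging the recurrence of Proposition \ref{prop1}. Rewriting that recurrence after renaming indices, we have $S(p+1,k) = k\,S(p,k) + S(p,k-1)$ for any admissible $p$ and $k$, and this is exactly the tool needed to step the first argument of $S$ upward by one.

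First I would treat the base case $n = m$, where the inequality $S(m,k) \leq S(m,k)$ is trivial. For the inductive step, suppose $S(m,k) \leq S(p,k)$ for some $p$ with $m \leq p$. Applying the recurrence gives
\[
S(p+1,k) \;=\; k\,S(p,k) + S(p,k-1).
\]
Since $k \geq 1$ and since $S(p,k-1) \geq 0$ by the observation recorded immediately after Definition \ref{Defsn}, we conclude $S(p+1,k) \geq k\,S(p,k) \geq S(p,k)$, which combined with the inductive hypothesis yields $S(p+1,k) \geq S(m,k)$. Iterating from $p = m$ up to $p+1 = n$ gives the desired inequality.

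The one delicate point — and the place I expect any real obstacle — is the corner case $k = 0$: the hypothesis $k \leq m$ would then allow $m = 0$, yet $S(0,0) = 1$ while $S(n,0) = 0$ for $n \geq 1$, which would violate the claim. I would handle this by following the convention (implicit in the paper's use of $\mathbb{N}$) that $k \geq 1$, so that the factor $k$ in the recurrence is at least one and the chain $S(p+1,k) \geq k\,S(p,k) \geq S(p,k)$ is valid. As a sanity check, one can also give a short combinatorial argument: $S(m,k)$ counts partitions of an $m$-set into $k$ non-empty blocks, and the map that sends such a partition to the partition of $\{1,\ldots,n\}$ obtained by appending $\{m+1,\ldots,n\}$ to a distinguished block is injective, so $S(m,k) \leq S(n,k)$. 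I would prefer the inductive argument as the main line, since it continues the algebraic development of Proposition \ref{prop1} and keeps the exposition self-contained.
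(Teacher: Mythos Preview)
Your proposal is correct and follows essentially the same approach as the paper: both arguments use the recurrence of Proposition~\ref{prop1} together with $k\ge 1$ and the non-negativity of Stirling numbers to deduce $S(p+1,k)\ge S(p,k)$, and then iterate this inequality between $m$ and $n$. The paper phrases the iteration as a descending chain from $n$ down to $m$ rather than a formal induction, and omits your discussion of the $k=0$ corner case and the combinatorial sanity check, but the core reasoning is identical.
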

\begin{proof}
Let $k,m,n\in\mathbb{N}$, with $k\leq m\leq n$, be arbitrary. If $m=n$, clearly $S(m,k)=S(n,k)$. Now consider that  $m<n$, so in particular $k<n$. Then Proposition \ref{prop1} implies that
\begin{equation*}
S(n,k) = kS(n-1,k)+S(n-1,k-1) \geq S(n-1,k).
\end{equation*}
Thus, by applying the previous inequality iteratively $n-m$ times, we obtain
\begin{equation*}
S(n,k) \geq S(n-1,k) \geq \ldots \geq S(m+1,k) \geq S(m,k).
\end{equation*}
\end{proof}
\begin{defin}
\label{DefS}
A sigmoid function is a bounded, differentiable, real function that is defined for all real input values and has a non-negative derivative at each point and exactly one inflection point.
\end{defin}
\noindent

The above definition implies that the graph of the sigmoid function is an S-shaped curve. It follows from Definition \ref{DefS} that the logistic function $\sigma (x)=\frac{1}{1+e^{-x}}$ is an example of the sigmoid function. In this work, the logistic function is used for the network activation function. 

Using these definitions and propositions, we have the following formula for the $n$-th derivative of the sigmoid function.
 
\begin{proposition}
\label{proprep}
The $n$-th derivative of the sigmoid function $\sigma (x)=\frac{1}{1+e^{-x}}$ is given by
\begin{equation}
\label{Sigma_n}
\sigma^{(n)}(x)=\sum_{k=1}^{n+1}(-1)^{k+1}(k-1)!S(n+1,k)\sigma^{k}(x).
\end{equation}
\end{proposition}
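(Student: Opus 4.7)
The plan is to prove the identity by induction on $n$, with the key identity $\sigma'(x)=\sigma(x)\bigl(1-\sigma(x)\bigr)=\sigma(x)-\sigma^{2}(x)$ serving as the mechanism that turns differentiation into shifting the exponent of $\sigma$, and with the Stirling recurrence from Proposition \ref{prop1} serving as the tool that makes the coefficients collapse correctly at each step.

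For the base case $n=1$, I would just plug in: the claimed sum reduces to $(-1)^{2}\cdot 0!\cdot S(2,1)\,\sigma(x)+(-1)^{3}\cdot 1!\cdot S(2,2)\,\sigma^{2}(x)=\sigma(x)-\sigma^{2}(x)$, and this equals $\sigma'(x)$ directly. Since $S(2,1)=S(2,2)=1$ by the remarks following Definition \ref{Defsn}, the verification is immediate.

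For the inductive step, assuming the formula at level $n$, I would differentiate termwise and substitute $\sigma'(x)=\sigma(x)-\sigma^{2}(x)$ to obtain
\begin{equation*}
\sigma^{(n+1)}(x)=\sum_{k=1}^{n+1}(-1)^{k+1}k!\,S(n+1,k)\bigl[\sigma^{k}(x)-\sigma^{k+1}(x)\bigr].
\end{equation*}
Splitting the sum and reindexing the second piece by $j=k+1$ gives two sums that together have the form
\begin{equation*}
\sum_{k=1}^{n+1}(-1)^{k+1}k!\,S(n+1,k)\,\sigma^{k}(x)+\sum_{k=2}^{n+2}(-1)^{k+1}(k-1)!\,S(n+1,k-1)\,\sigma^{k}(x).
\end{equation*}
Then I would collect coefficients by power of $\sigma$: for $2\le k\le n+1$ both sums contribute and the combined coefficient factors as $(-1)^{k+1}(k-1)!\bigl[kS(n+1,k)+S(n+1,k-1)\bigr]$, which Proposition \ref{prop1} collapses to $(-1)^{k+1}(k-1)!\,S(n+2,k)$, exactly what is needed.

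The only real obstacle is checking the two boundary indices $k=1$ and $k=n+2$, where only one of the two sums contributes. For $k=1$, the coefficient is $1!\,S(n+1,1)=1$, which matches $0!\,S(n+2,1)=1$. For $k=n+2$, the coefficient is $(n+1)!\,S(n+1,n+1)=(n+1)!$, which matches $(n+1)!\,S(n+2,n+2)=(n+1)!$. With these endpoints handled and the Stirling recurrence closing the middle, the sum matches $\sum_{k=1}^{n+2}(-1)^{k+1}(k-1)!\,S(n+2,k)\,\sigma^{k}(x)$, completing the induction.
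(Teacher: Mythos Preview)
Your proof is correct and follows essentially the same route as the paper's: induction on $n$, differentiate termwise using $\sigma'=\sigma-\sigma^{2}$, split and reindex, then invoke the Stirling recurrence of Proposition~\ref{prop1} for the interior indices and the identities $S(m,1)=S(m,m)=1$ for the boundary terms $k=1$ and $k=n+2$. The only cosmetic difference is that the paper also verifies the $n=0$ case explicitly, which is unnecessary for the induction but harmless.
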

\begin{proof}
We prove the above by mathematical induction. In particular, since $S(1,1)=S(2,1)=S(2,2)=1$, and $\sigma^{(0)}(x)=\sigma(x)$, it is clear that
\begin{align*}
\sigma^{(0)}(x)&=\sum_{k=1}^{1}(-1)^{k+1}(k-1)!S(1,k)\sigma^{k}(x)=(-1)^{2}(0!)S(1,1)\sigma(x)=\sigma(x),
\end{align*}
and
\begin{align*}
\sigma^{(1)}(x)&=\sum_{k=1}^{2}(-1)^{k+1}(k-1)!S(2,k)\sigma^{k}(x)\\
&=(-1)^{2}(0!)S(2,1)\sigma(x)+(-1)^{3}(1!)S(2,2)\sigma^{2}(x)\\
&=\sigma(x)-\sigma^{2}(x)\\
&=\sigma^{\prime}(x).
\end{align*}
\noindent Thus, Eq. (\ref{Sigma_n}) is true for $n=1$ and $n=2$. Now, assume that Eq. (\ref{Sigma_n}) is true for some $n$. From Proposition \ref{prop1} and the fact that $S(n+1,1)=S(n+2,1)=1$ and $S(n+1,n+1)=S(n+2,n+2)=1$ it follows that
\begin{align*}
\sigma^{(n+1)}(x)&=\frac{d}{dx}\sigma^{(n)}(x)\\
&=\frac{d}{dx}\sum_{k=1}^{n+1}(-1)^{k+1}(k-1)!S(n+1,k)\sigma^{k}(x)\\
&=\sum_{k=1}^{n+1}(-1)^{k+1}(k-1)!S(n+1,k)k\sigma^{k-1}(x)\left[\sigma(x)-\sigma^{2}(x)\right]\\
&=\sum_{k=1}^{n+1}(-1)^{k+1}(k!)S(n+1,k)\sigma^{k}(x)+\sum_{k=1}^{n+1}(-1)^{k+2}(k!)S(n+1,k)\sigma^{k+1}(x)\\
&=(-1)^{2}(1!)S(n+1,1)\sigma(x)+\sum_{k=2}^{n+1}(-1)^{k+1}(k!)S(n+1,k)\sigma^{k}(x)\\
& \ \ \ \ +\sum_{k=1}^{n}(-1)^{k+2}(k!)S(n+1,k)\sigma^{k+1}(x) +(-1)^{n+3}(n+1)!S(n+1,n+1)\sigma^{n+2}(x)\\
&=(-1)^{2}(1!)S(n+2,1)\sigma(x)+\sum_{k=2}^{n+1}(-1)^{k+1}(k!)S(n+1,k)\sigma^{k}(x)\\
& \ \ \ \ +\sum_{k=2}^{n+1}(-1)^{k+1}(k-1)!S(n+1,k-1)\sigma^{k}(x) +(-1)^{n+3}(n+1)!S(n+2,n+2)\sigma^{n+2}(x)\\
&=(-1)^{2}(1!)S(n+2,1)\sigma(x)+\sum_{k=2}^{n+1}(-1)^{k+1}(k-1)!\left[kS(n+1,k)+S(n+1,k-1)\right]\sigma^{k}(x) \\
& \ \ \ \ +(-1)^{n+3}(n+1)!S(n+2,n+2)\sigma^{n+2}(x)\\
&=(-1)^{2}(1!)S(n+2,1)\sigma(x)+\sum_{k=2}^{n+1}(-1)^{k+1}(k-1)!S(n+2,k)\sigma^{k}(x) \\
& \ \ \ \ +(-1)^{n+3}(n+1)!S(n+2,n+2)\sigma^{n+2}(x)\\
& = \sum_{k=1}^{n+2}(-1)^{k+1}(k-1)!S(n+2,k)\sigma^{k}(x).
\end{align*}
Therefore, for all $n\in\mathbb{N}$ it is true that $\sigma^{(n)}(x)=\sum_{k=1}^{n+1}(-1)^{k+1}(k-1)!S(n+1,k)\sigma^{k}(x)$.
\end{proof}

Using the above propositions, we can prove the following lemma. 
\begin{lemma}
\label{Propic}
Given $\epsilon \ll 1$, $n\in\mathbb{N}$ and $\sigma (x)=\frac{1}{1+e^{-x}}$, there exists  $\delta_{\epsilon_{n}}>0$ such that, if $\vert x\vert >\delta_{\epsilon_{n}}$, then $\vert \sigma^{(m)}(x)\vert <\epsilon$ for all $m\in\left\lbrace 1,\ldots ,n\right\rbrace$.
\end{lemma}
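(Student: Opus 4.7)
The plan is to combine the closed-form expression for $\sigma^{(m)}(x)$ given by Proposition~\ref{proprep} with two simple analytic facts: that $\sigma$ decays exponentially at $-\infty$, and that $\sigma^{(m)}$ is (anti)symmetric about the origin, so a single one-sided estimate controls both tails.

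First, I would apply Proposition~\ref{proprep} together with the triangle inequality and the elementary bound $0<\sigma^{k}(x)\leq \sigma(x)$ for $k\geq 1$ (valid because $0<\sigma\leq 1$) to obtain
\begin{equation*}
|\sigma^{(m)}(x)| \;\leq\; C_{m}\,\sigma(x), \qquad C_{m} := \sum_{k=1}^{m+1}(k-1)!\,S(m+1,k).
\end{equation*}
Corollary~\ref{cor1} gives $S(m+1,k)\leq S(n+1,k)$ for $1\leq m\leq n$, hence $C_{m}\leq C_{n}$, so a single constant $C_n$ controls all the derivatives simultaneously. For $x\leq 0$, the identity $\sigma(x)=\tfrac{e^{x}}{1+e^{x}}\leq e^{x}$ then yields $|\sigma^{(m)}(x)|\leq C_{n}\,e^{x}$.

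Next, to handle $x\geq 0$, I would invoke the functional equation $\sigma(-x)=1-\sigma(x)$ and prove by induction on $m\geq 1$ the parity relation $\sigma^{(m)}(-x) = (-1)^{m+1}\,\sigma^{(m)}(x)$; the inductive step follows at once by differentiating both sides of the hypothesis. Consequently $|\sigma^{(m)}(x)|=|\sigma^{(m)}(-x)|\leq C_{n}\,e^{-x}$ for $x\geq 0$, and combining the two halves,
\begin{equation*}
|\sigma^{(m)}(x)| \;\leq\; C_{n}\,e^{-|x|} \qquad \text{for every } x\in\mathbb{R} \text{ and every } m\in\{1,\ldots,n\}.
\end{equation*}
Setting $\delta_{\epsilon_{n}}:=\ln(C_{n}/\epsilon)$, which is positive since $\epsilon\ll 1$, makes the right-hand side strictly less than $\epsilon$ whenever $|x|>\delta_{\epsilon_{n}}$, as required.

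There is no genuine obstacle in the argument. The only two points that require a little care are verifying the parity relation by a clean induction (so that the right sign $(-1)^{m+1}$ survives differentiation) and noticing that the monotonicity $C_{m}\leq C_{n}$ supplied by Corollary~\ref{cor1} is precisely what lets us choose a single $\delta_{\epsilon_{n}}$ that works uniformly for all $m\in\{1,\ldots,n\}$, instead of one threshold per derivative order.
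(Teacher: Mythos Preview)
Your proposal is correct and follows essentially the same route as the paper: both apply Proposition~\ref{proprep} with the triangle inequality and $\sigma^{k}\leq\sigma$, invoke Corollary~\ref{cor1} to make the constant uniform in $m$, and use the parity relation $\sigma^{(m)}(-x)=(-1)^{m+1}\sigma^{(m)}(x)$ to transfer the left-tail bound to the right tail. The only cosmetic differences are that the paper packages the constant as $(n+1)!\max_{k}S(n+1,k)$ rather than your sum $C_{n}$, and inverts $\sigma$ directly (setting $\delta_{\epsilon_{n}}=\ln\!\big((1-\epsilon_{n})/\epsilon_{n}\big)$) instead of using your cleaner bound $\sigma(x)\leq e^{x}$ for $x\leq 0$.
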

\begin{proof}
Let $\epsilon \ll 1$ and $m,n\in\mathbb{N}$, $m\leq n$, be arbitrary. By letting $\epsilon_n$ and $\delta_{\epsilon_n}$ be as the followings
\begin{equation}
\label{ELemnma}
\epsilon_{n}=\frac{\epsilon}{(n+1)!\max_{k\in \lbrace 1,\ldots n+1\rbrace} \lbrace S(n+1,k)\rbrace},
\end{equation}
and 
\begin{equation}
\label{deltaELemnma}
\delta_{\epsilon_{n}}=\ln{\left( \frac{1-\epsilon_{n}}{\epsilon_{n}}\right)},
\end{equation}
Corollary \ref{cor1} and Proposition \ref{proprep} imply that for any $ x< -\delta_{\epsilon_{n}}$ it holds that 

\begin{align*}
\left\vert \sigma^{(m)}(x) \right\vert &=\left\vert \sum_{k=1}^{m+1}(-1)^{k+1}(k-1)!S(m+1,k)\sigma^{k}(x)\right\vert \\
& \leq \sum_{k=1}^{m+1} \left\vert (k-1)!S(m+1,k)\sigma^{k}(x)\right\vert \\
& \leq \sum_{k=1}^{m+1} \max_{k\in \lbrace 1,\ldots m+1\rbrace} \left\lbrace (k-1)!S(m+1,k)\sigma^{k}(x)\right\rbrace \\
& \leq \sum_{k=1}^{m+1} m! \max_{k\in \lbrace 1,\ldots m+1\rbrace} \lbrace S(m+1,k)\rbrace  \sigma\left(x\right)  \\
& < (m+1)! \max_{k\in \lbrace 1,\ldots m+1\rbrace} \lbrace S(m+1,k)\rbrace \sigma\left(-\delta_{\epsilon}\right) \\
& \leq (n+1)! \max_{k\in \lbrace 1,\ldots n+1\rbrace} \lbrace S(n+1,k)\rbrace \frac{1}{1+e^{\ln{\left( \frac{1-\epsilon_{n}}{\epsilon_{n}}\right)}}}\\
& = (n+1)! \max_{k\in \lbrace 1,\ldots n+1\rbrace} \lbrace S(n+1,k)\rbrace \epsilon_{n}\\
& = \epsilon .
\end{align*}
Since, $\sigma (x)= \frac{1}{1+e^{-x}}=\frac{e^{x}}{e^{x}+1}=1-\frac{1}{1+e^{x}}=1-\sigma (-x)$, it is clear that $\sigma^{(n)}(x)=(-1)^{n+1}\sigma^{(n)}(-x)$, thus, for any $ x> \delta_{\epsilon_{n}}$ we have that
\begin{align*}
\left\vert \sigma^{(m)}(x) \right\vert &= \left\vert (-1)^{m+1}\sigma^{(m)}(-x) \right\vert = \left\vert \sigma^{(m)}(-x) \right\vert \leq \max_{x< -\delta_{\epsilon_{n}}}\left\vert\sigma^{(m)}\left(x\right)\right\vert < \epsilon.
\end{align*}
Therefore, for all $\vert x\vert >\delta_{\epsilon_{n}}$ it holds that $\vert \sigma^{(m)}(x)\vert <\epsilon$ for all $m\in\lbrace 1,\ldots ,n\rbrace$.
\end{proof}

\subsection{Active Gradient Zone Analysis}

To analyze the proposed method, we consider the interval where all derivatives of the sigmoid function are nonnegligible (i.e., the active gradient zone), and the complement of this interval (i.e., the diminishing gradient zone). This definition holds for any function, not just $\sigma(x)$. These definitions depend on which values of $\sigma^{(n)}(x)$ are considered negligible for all $n\in \mathbb{N}$, and the active and diminishing gradient zones can generally be defined when a reference value, such as $\epsilon$, is given. Lemma \ref{Propic} provides the condition of choosing $\delta_{\epsilon_n}$ associated with the active and diminishing gradient zones of $\sigma(x)$ for the given value of $\epsilon \ll 1$, considering not all, but only the first $n$ derivatives of $\sigma(x)$, for a given $n\in \mathbb{N}$, in Eqs. $(\ref{ELemnma})$ and $(\ref{deltaELemnma})$. This paper considers using only the first $n$ derivatives to determine these regions. Figure \ref{fig2} presents the active and diminishing gradient zones of $\sigma (x)$ for its first four derivatives and $\epsilon\approx 10^{-3}$.
\begin{figure}[h]
	\centering
	\includegraphics[scale=.65]{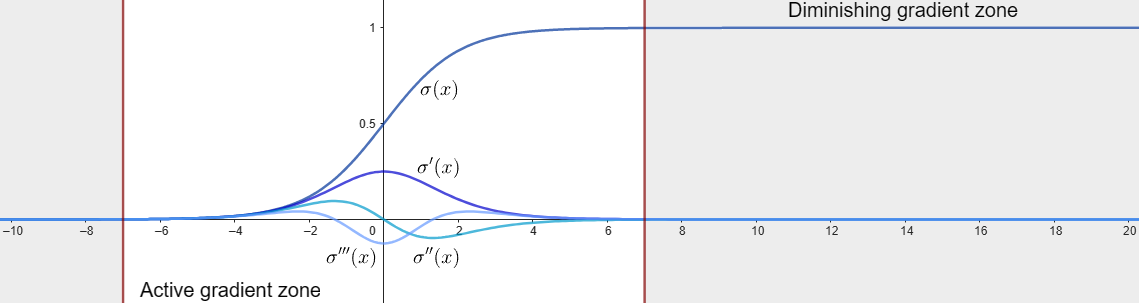}
	\caption{Active and diminishing gradient zones of the sigmoid function $\sigma (x)=\frac{1}{1+e^{-x}}$.}
	\label{fig2}
\end{figure}

Any trial solution $U_{T}(x,t)$ of a PDE of order $n\in\mathbb{N}$ such as those analyzed in this work, given by Eq. $(\ref{NNs})$, has as its activation function $\sigma (x)=\frac{1}{1+e^{-x}}$. Thus, between the equation, the initial and boundary conditions, all the partial derivatives of $U_{T}(x,t)$ that appears in the calculation of $Loss_{Total}$ are $\tfrac{\partial^{m} U_{T}(x,t)}{\partial x^{p}\partial t^{m-p}}$ for $0\leq p\leq m\leq n$. Therefore, all the partial derivatives of $U_{T}(x,t)$ with respect to $\omega_{1}^{(j,i)}$ and $b_{1}^{(j)}$ that appears when applying the Adam optimizer to $Loss_{Total}$ are $\tfrac{\partial^{m+1} U_{T}(x,t)}{\partial\omega_{1}^{(j,i)}\partial x^{p}\partial t^{m-p}}$ and $\tfrac{\partial^{m+1} U_{T}(x,t)}{\partial b_{1}^{(j)}\partial x^{p}\partial t^{m-p}}$ for $0\leq p\leq m\leq n$, respectively. Since our goal is to improve the optimization process $\omega_{1}^{(j,i)}$ and $b_{1}^{(j)}$ through stratified sampling, it is necessary to first analyze previous partial derivatives of $U_{T}(x,t)$, considering the active and diminishing gradient zones of $\sigma\left(\omega_{1}^{(j,1)}x+\omega_{1}^{(j,2)}t+ b_{1}^{(j)}\right)$, for a given $\epsilon\ll 1$ and $j=1,\ldots ,h_{1}$. 

\begin{theor}
\label{Coric}
Let $U_{T}(x,t)$ be given by Eq. $(\ref{NNs})$, $\epsilon \ll 1$ and $n\in\mathbb{N}$. For any $j\leq h_{1}$, there exists $\delta_{\epsilon_{j}}>0$ such that for all
\begin{equation}
\label{dgztheo1}
(x,t)\notin \left\lbrace (x,t)\in\Omega_{x}\times\Omega_{t} : \left\vert \omega_{1}^{(j,1)}x+\omega_{1}^{(j,2)}t+ b_{1}^{(j)}\right\vert < \delta_{\epsilon_{j}}\right\rbrace ,
\end{equation}
it holds that 
\begin{equation}
\label{Cor8}
\left\vert\frac{\partial^{m+1} U_{T}(x,t)}{\partial\omega_{1}^{(j,i)}\partial x^{p}\partial t^{m-p}}\right\vert <\epsilon \ \ \ \mathrm{and} \ \ \ \left\vert\frac{\partial^{m+1} U_{T}(x,t)}{\partial b_{1}^{(j)}\partial x^{p}\partial t^{m-p}}\right\vert <\epsilon ,
\end{equation}
for $i=1,2,$ and $0\leq p\leq m\leq n$.
\end{theor}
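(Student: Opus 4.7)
The plan is to exploit the observation that the first-layer parameters $\omega_1^{(j,i)}$ and $b_1^{(j)}$ enter $U_T$ only through the scalar $z_j := \omega_1^{(j,1)} x + \omega_1^{(j,2)} t + b_1^{(j)}$ (via the neuron $N_1^{(j)} = \sigma(z_j)$), so every differentiation with respect to these parameters introduces a factor $\sigma'(z_j)$, and subsequent $x$- and $t$-differentiations can only raise the order of this factor or act on quantities that remain uniformly bounded on the compact domain $\Omega_x \times \Omega_t$. The bound will then follow from Lemma \ref{Propic} applied to $\sigma^{(k)}(z_j)$ for $1 \leq k \leq n+1$.

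First I write $U_T(x,t) = G\bigl(\sigma(z_1), \ldots, \sigma(z_{h_1})\bigr)$, where $G$ is the composition of linear maps and $\sigma$ built from the weights and biases of layers $2, \ldots, F+1$ (with no explicit $(x,t)$ dependence). The chain rule gives
$$\frac{\partial U_T}{\partial \omega_1^{(j,i)}} = x_i \, \sigma'(z_j) \, G_j(x,t), \qquad \frac{\partial U_T}{\partial b_1^{(j)}} = \sigma'(z_j) \, G_j(x,t),$$
with $x_1 := x$, $x_2 := t$ and $G_j := (\partial G / \partial N_1^{(j)})\bigl(\sigma(z_1), \ldots, \sigma(z_{h_1})\bigr)$. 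I then argue by induction on $m$ that for $D_{p,m} := \partial^m / (\partial x^p \partial t^{m-p})$,
$$D_{p,m}\!\left(\frac{\partial U_T}{\partial \omega_1^{(j,i)}}\right) = \sum_{\alpha} \sigma^{(k_\alpha)}(z_j) \, R_\alpha(x,t), \qquad 1 \leq k_\alpha \leq m+1,$$
where each $R_\alpha$ is a finite product of powers of $x, t$, network parameters, and evaluated sigmoid values $\sigma^{(r)}(z_{j'})$ (for various $r \geq 0$ and $j'$, possibly including $j' = j$). The base case $m=0$ is the identity above. For the inductive step, Leibniz on a typical term $\sigma^{(k_\alpha)}(z_j) R_\alpha$ yields either $\omega_1^{(j,q)} \sigma^{(k_\alpha+1)}(z_j) R_\alpha$ (differentiating the sigmoid factor, with $q \in \{1,2\}$ for $\partial x$ or $\partial t$) or $\sigma^{(k_\alpha)}(z_j) \, \partial R_\alpha/\partial x$ (respectively $\partial t$), both of the claimed form. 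The same expansion holds verbatim for $\partial/\partial b_1^{(j)}$-derivatives with $x_i$ replaced by $1$.

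Next I bound the residuals. On the compact rectangle $\Omega_x \times \Omega_t$, $|x|$ and $|t|$ are bounded; the network parameters are fixed constants; and every $\sigma^{(r)}$ is a uniformly bounded function on $\mathbb{R}$ (by Proposition \ref{proprep} together with $0 < \sigma < 1$). Hence there exists $M > 0$, depending only on the fixed network, the domain, and $n$, such that $\sum_\alpha |R_\alpha(x,t)| \leq M$ for all admissible $(m,p,i)$ and for both the $\omega$- and the $b$-expansions. Applying Lemma \ref{Propic} with $\epsilon$ replaced by $\epsilon/M$ and $n$ replaced by $n+1$ furnishes $\delta_{\epsilon_j} > 0$ such that $|z_j| > \delta_{\epsilon_j}$ implies $|\sigma^{(k)}(z_j)| < \epsilon/M$ for every $1 \leq k \leq n+1$. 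Combining yields, for $(x,t)$ outside the strip in (\ref{dgztheo1}),
$$\left| D_{p,m}\!\left(\frac{\partial U_T}{\partial \omega_1^{(j,i)}}\right)\right| \leq \sum_{\alpha} |\sigma^{(k_\alpha)}(z_j)| \, |R_\alpha(x,t)| < \frac{\epsilon}{M} \cdot M = \epsilon,$$
and the analogous bound for $\partial U_T/\partial b_1^{(j)}$, which is the claim (\ref{Cor8}).

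The main obstacle is formalizing the structural induction cleanly. The subtlety is that when we differentiate $R_\alpha$ with respect to $x$ or $t$, we reach deeper into $G$ via the chain rule and can reintroduce factors of $\sigma^{(r)}(z_j)$ (with any $r \geq 0$) coming from later-layer activations whose inputs depend on $\sigma(z_j)$. Crucially, however, these reintroduced factors are uniformly \emph{bounded} values of $\sigma^{(r)}$ and therefore contribute only to the coefficient $R_\alpha$; they neither eliminate nor weaken the distinguished factor $\sigma^{(k_\alpha)}(z_j)$ generated by the original $\omega_1^{(j,i)}$- or $b_1^{(j)}$-differentiation, which is preserved (with index at most incremented) at every step of the induction. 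Once this bookkeeping is in place, the remaining bound is a direct application of Lemma \ref{Propic}.
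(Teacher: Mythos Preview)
Your proof is correct and captures the essential mechanism, but the route differs from the paper's. You abstract layers $2,\ldots,F+1$ into a single smooth map $G$ and run a structural induction on the differentiation order $m$, tracking only that every term retains at least one distinguished factor $\sigma^{(k_\alpha)}(z_j)$ with $1\le k_\alpha\le m+1$; the residual $R_\alpha$ is then bounded uniformly by compactness of $\Omega_x\times\Omega_t$ and the global boundedness of all $\sigma^{(r)}$. The paper instead works explicitly layer by layer via the multivariate Fa\`a di Bruno formula, writing out recurrences for $\partial^{|\lambda|}N_k^{(j_k)}/\prod_s\partial v_s$ at every hidden layer $k=1,\ldots,F$ and assembling explicit constants $M_{(j,i)}^{m+1}$ from these recurrences before invoking Lemma~\ref{Propic}. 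Your argument is shorter, more conceptual, and makes the role of Lemma~\ref{Propic} transparent; the paper's argument is notationally heavier but more constructive (the bound constants are in principle computable), and its layerwise recurrences make the passage to Corollary~\ref{Cortheo1} (derivatives with respect to deeper-layer parameters $\omega_k^{(j,i)},b_k^{(j)}$) immediate by reusing the same formulas.
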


\begin{proof}

Let $U_{T}(x,t)$ be given by Eq. $(\ref{NNs})$, $\epsilon \ll 1$ and $j\leq h_{1}$. Consider $\delta_{is}$ as the Kronecker delta, $P(A,q)$ as the set of all the partitions of the set $A$ into $q$ non-empty sets $\left(\lambda_1,\ldots,\lambda_{q}\right)$. From the multivariate Fa{\`a} di Bruno's formula \cite{faa1855sullo, di1857note, constantine1996multivariate} the partial derivatives in Eq. $(\ref{Cor8})$ are determined by the following recurrence equations

\begin{equation*}
\left\lbrace \begin{split}
& \frac{\partial^{\left\vert \lambda_{l}^{2}\right\vert} N_{1}^{\left(j_{1}\right)}(x,t)}{\prod_{s\in\lambda_{l}^{2}}\partial v_{s}}  =   \sigma^{\left(\left\vert \lambda_{l}^{2}\right\vert\right)}\left( \omega_{1}^{(j,1)}x+\omega_{1}^{(j,2)}t+b_{1}^{(j)}\right) \prod_{s\in\lambda_{l}^{2}} \left(\delta_{(m+1)s} x +\sum_{i=p+1}^{m} \delta_{is} \omega_{1}^{(j,2)} + \sum_{i=1}^{p} \delta_{is}\omega_{1}^{(j,1)} \right)  , \\
&  \frac{ \partial^{\left\vert \lambda_{l}^{k+1}\right\vert} N_{k}^{\left(j_{k}\right)}(x,t)}{\prod_{s\in\lambda_{l}^{k+1}}\partial v_{s}}  =  \sum_{q=1}^{\left\vert \lambda_{l}^{k+1}\right\vert} \sum_{\lambda^{k}\in P\left( \lambda_{l}^{k+1},q\right)} \sigma^{(q)}\left( \sum_{i=1}^{h_{k-1}}\omega_{k}^{\left(j_{k},i\right)}N_{k-1}^{(i)}(x,t)+b_{k}^{\left(j_{k}\right)}\right) \prod_{r=1}^{q} \frac{ \sum_{i=1}^{h_{k-1}} \omega_{k}^{\left( j_{k},i\right)} \partial^{\left\vert \lambda_{r}^{k}\right\vert} N_{k-1}^{(i)}(x,t)}{\prod_{s\in\lambda_{r}^{k}}\partial v_{s}}  , \\
&  \frac{\partial^{m+1} N_{F}^{\left(j_{F}\right)}(x,t)}{\partial\omega_{1}^{(j,1)}\partial x^{p}\partial t^{m-p}}  = 
\sum_{q=1}^{m+1} \sum_{\lambda^{F}\in P\left( \bigcup_{a=1}^{m+1}\lbrace a\rbrace ,q\right)} \sigma^{(q)}\left( \sum_{i=1}^{h_{F-1}}\omega_{F}^{\left(j_{F},i\right)}N_{F-1}^{(i)}(x,t)+b_{F}^{\left(j_{F}\right)}\right) \prod_{r=1}^{q} \frac{ \sum_{i=1}^{h_{F-1}} \omega_{F}^{\left( j_{F},i\right)} \partial^{\left\vert \lambda_{r}^{F}\right\vert} N_{F-1}^{(i)}(x,t)}{\prod_{s\in\lambda^{F}_{r}}\partial v_{s}}  , \\
& \left\vert\frac{\partial^{m+1} U_{T}(x,t)}{\partial\omega_{1}^{(j,1)}\partial x^{p}\partial t^{m-p}}\right\vert = \left\vert \sum_{i=1}^{h_{F}}\omega_{F+1}^{(1,i)} \frac{\partial^{m+1}N_{F}^{(i)}(x,t) }{\partial\omega_{1}^{(j,1)}\partial x^{p}\partial t^{m-p}} \right\vert ,
\end{split} \right.
\end{equation*}
where $v_{1}=\ldots=v_{p}=x$, $v_{p+1}=\ldots=v_{m}=t$ and $v_{m+1}=\omega_{1}^{(j,1)}$, for $j_{k}=1,\ldots ,h_{k}$ and $k=2,\ldots ,F-1$.
\begin{equation*}
\left\lbrace \begin{split}
&  \frac{\partial^{\left\vert \lambda_{l}^{2}\right\vert} N_{1}^{\left(j_{1}\right)}(x,t)}{\prod_{s\in\lambda_{l}^{2}}\partial y_{s}} =   \sigma^{\left(\left\vert \lambda_{l}^{2}\right\vert\right)}\left( \omega_{1}^{(j,1)}x+\omega_{1}^{(j,2)}t+b_{1}^{(j)}\right) \prod_{s\in\lambda_{l}^{2}} \left(\delta_{(m+1)s} t +\sum_{i=p+1}^{m} \delta_{is} \omega_{1}^{(j,2)} + \sum_{i=1}^{p} \delta_{is}\omega_{1}^{(j,1)} \right)  , \\
&  \frac{ \partial^{\left\vert \lambda_{l}^{k+1}\right\vert} N_{k}^{\left(j_{k}\right)}(x,t)}{\prod_{s\in\lambda_{l}^{k+1}}\partial y_{s}}  =  \sum_{q=1}^{\left\vert \lambda_{l}^{k+1}\right\vert} \sum_{\lambda^{k}\in P\left( \lambda_{l}^{k+1},q\right)} \sigma^{(q)}\left( \sum_{i=1}^{h_{k-1}}\omega_{k}^{\left(j_{k},i\right)}N_{k-1}^{(i)}(x,t)+b_{k}^{\left(j_{k}\right)}\right) \prod_{r=1}^{q} \frac{ \sum_{i=1}^{h_{k-1}} \omega_{k}^{\left( j_{k},i\right)} \partial^{\left\vert \lambda_{r}^{k}\right\vert} N_{k-1}^{(i)}(x,t)}{\prod_{s\in\lambda_{r}^{k}}\partial y_{s}}  , \\
&  \frac{\partial^{m+1} N_{F}^{\left(j_{F}\right)}(x,t)}{\partial\omega_{1}^{(j,2)}\partial x^{p}\partial t^{m-p}}  =  
\sum_{q=1}^{m+1} \sum_{\lambda^{F}\in P\left( \bigcup_{a=1}^{m+1}\lbrace a\rbrace ,q\right)} \sigma^{(q)}\left( \sum_{i=1}^{h_{F-1}}\omega_{F}^{\left(j_{F},i\right)}N_{F-1}^{(i)}(x,t)+b_{F}^{\left(j_{F}\right)}\right) \prod_{r=1}^{q} \frac{ \sum_{i=1}^{h_{F-1}} \omega_{F}^{\left( j_{F},i\right)} \partial^{\left\vert \lambda_{r}^{F}\right\vert} N_{F-1}^{(i)}(x,t)}{\prod_{s\in\lambda^{F}_{r}}\partial y_{s}}  , \\
& \left\vert\frac{\partial^{m+1} U_{T}(x,t)}{\partial\omega_{1}^{(j,2)}\partial x^{p}\partial t^{m-p}}\right\vert = \left\vert \sum_{i=1}^{h_{F}}\omega_{F+1}^{(1,i)} \frac{\partial^{m+1}N_{F}^{(i)}(x,t) }{\partial\omega_{1}^{(j,2)}\partial x^{p}\partial t^{m-p}} \right\vert ,
\end{split} \right.
\end{equation*}
where $y_{1}=\ldots=y_{p}=x$, $y_{p+1}=\ldots=y_{m}=t$ and $y_{m+1}=\omega_{1}^{(j,2)}$, for $j_{k}=1,\ldots ,h_{k}$ and $k=2,\ldots ,F-1$.
\begin{equation*}
\left\lbrace \begin{split}
& \frac{\partial^{\left\vert \lambda_{l}^{2}\right\vert} N_{1}^{\left(j_{1}\right)}(x,t)}{\prod_{s\in\lambda_{l}^{2}}\partial z_{s}} =   \sigma^{\left(\left\vert \lambda_{l}^{2}\right\vert\right)}\left( \omega_{1}^{(j,1)}x+\omega_{1}^{(j,2)}t+b_{1}^{(j)}\right) \prod_{s\in\lambda_{l}^{2}} \left(\delta_{(m+1)s}  +\sum_{i=p+1}^{m} \delta_{is} \omega_{1}^{(j,2)} + \sum_{i=1}^{p} \delta_{is}\omega_{1}^{(j,1)} \right) , \\
&  \frac{ \partial^{\left\vert \lambda_{l}^{k+1}\right\vert} N_{k}^{\left(j_{k}\right)}(x,t)}{\prod_{s\in\lambda_{l}^{k+1}}\partial z_{s}}  = \sum_{q=1}^{\left\vert \lambda_{l}^{k+1}\right\vert} \sum_{\lambda^{k}\in P\left( \lambda_{l}^{k+1},q\right)} \sigma^{(q)}\left( \sum_{i=1}^{h_{k-1}}\omega_{k}^{\left(j_{k},i\right)}N_{k-1}^{(i)}(x,t)+b_{k}^{\left(j_{k}\right)}\right) \prod_{r=1}^{q} \frac{ \sum_{i=1}^{h_{k-1}} \omega_{k}^{\left( j_{k},i\right)} \partial^{\left\vert \lambda_{r}^{k}\right\vert} N_{k-1}^{(i)}(x,t)}{\prod_{s\in\lambda_{r}^{k}}\partial z_{s}}  , \\
& \frac{\partial^{m+1} N_{F}^{\left(j_{F}\right)}(x,t)}{\partial b_{1}^{(j)}\partial x^{p}\partial t^{m-p}} =  
\sum_{q=1}^{m+1} \sum_{\lambda^{F}\in P\left( \bigcup_{a=1}^{m+1}\lbrace a\rbrace ,q\right)} \sigma^{(q)}\left( \sum_{i=1}^{h_{F-1}}\omega_{F}^{\left(j_{F},i\right)}N_{F-1}^{(i)}(x,t)+b_{F}^{\left(j_{F}\right)}\right) \prod_{r=1}^{q} \frac{ \sum_{i=1}^{h_{F-1}} \omega_{F}^{\left( j_{F},i\right)} \partial^{\left\vert \lambda_{r}^{F}\right\vert} N_{F-1}^{(i)}(x,t)}{\prod_{s\in\lambda^{F}_{r}}\partial z_{s}} , \\
& \left\vert\frac{\partial^{m+1} U_{T}(x,t)}{\partial b_{1}^{(j)}\partial x^{p}\partial t^{m-p}}\right\vert = \left\vert \sum_{i=1}^{h_{F}}\omega_{F+1}^{(1,i)} \frac{\partial^{m+1}N_{F}^{(i)}(x,t) }{\partial b_{1}^{(j)}\partial x^{p}\partial t^{m-p}} \right\vert ,
\end{split} \right.
\end{equation*}
where $z_{1}=\ldots=z_{p}=x$, $z_{p+1}=\ldots=z_{m}=t$ and $z_{m+1}=b_{1}^{(j)}$, for $j_{k}=1,\ldots ,h_{k}$ and $k=2,\ldots ,F-1$.

Since $\left\vert\sigma^{(n)}(x)\right\vert\leq 1$ for all $n\in\mathbb{N}$ and $x\in\mathbb{R}$ and $\vert P(A,q)\vert = S(\vert A\vert, q)$ is finite, if $\Omega_{x}\times\Omega_{t}$ is bounded, from the previous recurrence equations, it follows that
\begin{equation*}
\left\lbrace \begin{split}
& \left\vert \frac{\partial^{\left\vert \lambda_{l}^{2}\right\vert} N_{1}^{\left(j_{1}\right)}(x,t)}{\prod_{s\in\lambda_{l}^{2}}\partial v_{s}} \right\vert \leq \left\vert \sigma^{\left(\left\vert \lambda_{l}^{2}\right\vert\right)}\left( \omega_{1}^{(j,1)}x+\omega_{1}^{(j,2)}t+b_{1}^{(j)}\right) \right\vert \max_{x\in\Omega_{x}} \left\lbrace\prod_{s\in\lambda_{l}^{2}} \left\vert\delta_{(m+1)s} x +\sum_{i=p+1}^{m} \delta_{is} \omega_{1}^{(j,2)} + \sum_{i=1}^{p} \delta_{is}\omega_{1}^{(j,1)} \right\vert \right\rbrace  , \\
& \left\vert\frac{ \partial^{\left\vert \lambda_{l}^{k+1}\right\vert} N_{k}^{\left(j_{k}\right)}(x,t)}{\prod_{s\in\lambda_{l}^{k+1}}\partial v_{s}}  \right\vert \leq  \sum_{q=1}^{\left\vert \lambda_{l}^{k+1}\right\vert} \sum_{\lambda^{k}\in P\left( \lambda_{l}^{k+1},q\right)} \prod_{r=1}^{q} 
\sum_{i=1}^{h_{k-1}} \left\vert\omega_{k}^{\left( j_{k},i\right)}\right\vert \left\vert\frac{ \partial^{\left\vert \lambda_{r}^{k}\right\vert} N_{k-1}^{(i)}(x,t)}{\prod_{s\in\lambda_{r}^{k}}\partial v_{s}} \right\vert, \\
& \left\vert \frac{\partial^{m+1} N_{F}^{\left(j_{F}\right)}(x,t)}{\partial\omega_{1}^{(j,1)}\partial x^{p}\partial t^{m-p}} \right\vert \leq 
\sum_{q=1}^{m+1} \sum_{\lambda^{F}\in P\left( \bigcup_{a=1}^{m+1}\lbrace a\rbrace ,q\right)} \prod_{r=1}^{q} \sum_{i=1}^{h_{F-1}} \left\vert\omega_{F}^{\left( j_{F},i\right)} \right\vert \left\vert\frac{ \partial^{\left\vert \lambda_{r}^{F}\right\vert} N_{F-1}^{(i)}(x,t)}{\prod_{s\in\lambda^{F}_{r}}\partial v_{s}} \right\vert , \\
& \left\vert\frac{\partial^{m+1} U_{T}(x,t)}{\partial\omega_{1}^{(j,1)}\partial x^{p}\partial t^{m-p}}\right\vert \leq \sum_{i=1}^{h_{F}} \left\vert \omega_{F+1}^{(1,i)} \right\vert \left\vert \frac{\partial^{m+1}N_{F}^{(i)}(x,t) }{\partial\omega_{1}^{(j,1)}\partial x^{p}\partial t^{m-p}} \right\vert ,
\end{split} \right.
\end{equation*}
for $j_{k}=1,\ldots ,h_{k}$ and $k=2,\ldots ,F-1$.
\begin{equation*}
\left\lbrace \begin{split}
& \left\vert \frac{\partial^{\left\vert \lambda_{l}^{2}\right\vert} N_{1}^{\left(j_{1}\right)}(x,t)}{\prod_{s\in\lambda_{l}^{2}}\partial y_{s}} \right\vert \leq \left\vert \sigma^{\left(\left\vert \lambda_{l}^{2}\right\vert\right)}\left( \omega_{1}^{(j,1)}x+\omega_{1}^{(j,2)}t+b_{1}^{(j)}\right) \right\vert \max_{t\in\Omega_{t}} \left\lbrace\prod_{s\in\lambda_{l}^{2}} \left\vert\delta_{(m+1)s} t +\sum_{i=p+1}^{m} \delta_{is} \omega_{1}^{(j,2)} + \sum_{i=1}^{p} \delta_{is}\omega_{1}^{(j,1)} \right\vert \right\rbrace  , \\
& \left\vert\frac{ \partial^{\left\vert \lambda_{l}^{k+1}\right\vert} N_{k}^{\left(j_{k}\right)}(x,t)}{\prod_{s\in\lambda_{l}^{k+1}}\partial y_{s}}  \right\vert \leq  \sum_{q=1}^{\left\vert \lambda_{l}^{k+1}\right\vert} \sum_{\lambda^{k}\in P\left( \lambda_{l}^{k+1},q\right)} \prod_{r=1}^{q} 
\sum_{i=1}^{h_{k-1}} \left\vert\omega_{k}^{\left( j_{k},i\right)}\right\vert \left\vert\frac{ \partial^{\left\vert \lambda_{r}^{k}\right\vert} N_{k-1}^{(i)}(x,t)}{\prod_{s\in\lambda_{r}^{k}}\partial y_{s}} \right\vert ,\\
& \left\vert \frac{\partial^{m+1} N_{F}^{\left(j_{F}\right)}(x,t)}{\partial\omega_{1}^{(j,2)}\partial x^{p}\partial t^{m-p}} \right\vert \leq 
\sum_{q=1}^{m+1} \sum_{\lambda^{F}\in P\left( \bigcup_{a=1}^{m+1}\lbrace a\rbrace ,q\right)} \prod_{r=1}^{q} \sum_{i=1}^{h_{F-1}} \left\vert\omega_{F}^{\left( j_{F},i\right)} \right\vert \left\vert\frac{ \partial^{\left\vert \lambda_{r}^{F}\right\vert} N_{F-1}^{(i)}(x,t)}{\prod_{s\in\lambda^{F}_{r}}\partial y_{s}} \right\vert , \\
& \left\vert\frac{\partial^{m+1} U_{T}(x,t)}{\partial\omega_{1}^{(j,2)}\partial x^{p}\partial t^{m-p}}\right\vert \leq \sum_{i=1}^{h_{F}} \left\vert \omega_{F+1}^{(1,i)} \right\vert \left\vert \frac{\partial^{m+1}N_{F}^{(i)}(x,t) }{\partial\omega_{1}^{(j,2)}\partial x^{p}\partial t^{m-p}} \right\vert ,
\end{split} \right.
\end{equation*}
for $j_{k}=1,\ldots ,h_{k}$ and $k=2,\ldots ,F-1$.
\begin{equation*}
\left\lbrace \begin{split}
& \left\vert \frac{\partial^{\left\vert \lambda_{l}^{2}\right\vert} N_{1}^{\left(j_{1}\right)}(x,t)}{\prod_{s\in\lambda_{l}^{2}}\partial z_{s}} \right\vert \leq \left\vert \sigma^{\left(\left\vert \lambda_{l}^{2}\right\vert\right)}\left( \omega_{1}^{(j,1)}x+\omega_{1}^{(j,2)}t+b_{1}^{(j)}\right) \right\vert \prod_{s\in\lambda_{l}^{2}} \left\vert\delta_{(m+1)s}  +\sum_{i=p+1}^{m} \delta_{is} \omega_{1}^{(j,2)} + \sum_{i=1}^{p} \delta_{is}\omega_{1}^{(j,1)} \right\vert   , \\
& \left\vert\frac{ \partial^{\left\vert \lambda_{l}^{k+1}\right\vert} N_{k}^{\left(j_{k}\right)}(x,t)}{\prod_{s\in\lambda_{l}^{k+1}}\partial z_{s}}  \right\vert \leq  \sum_{q=1}^{\left\vert \lambda_{l}^{k+1}\right\vert} \sum_{\lambda^{k}\in P\left( \lambda_{l}^{k+1},q\right)} \prod_{r=1}^{q} 
\sum_{i=1}^{h_{k-1}} \left\vert\omega_{k}^{\left( j_{k},i\right)}\right\vert \left\vert\frac{ \partial^{\left\vert \lambda_{r}^{k}\right\vert} N_{k-1}^{(i)}(x,t)}{\prod_{s\in\lambda_{r}^{k}}\partial z_{s}} \right\vert , \\
& \left\vert \frac{\partial^{m+1} N_{F}^{\left(j_{F}\right)}(x,t)}{\partial b_{1}^{(j)}\partial x^{p}\partial t^{m-p}} \right\vert \leq 
\sum_{q=1}^{m+1} \sum_{\lambda^{F}\in P\left( \bigcup_{a=1}^{m+1}\lbrace a\rbrace ,q\right)} \prod_{r=1}^{q} \sum_{i=1}^{h_{F-1}} \left\vert\omega_{F}^{\left( j_{F},i\right)} \right\vert \left\vert\frac{ \partial^{\left\vert \lambda_{r}^{F}\right\vert} N_{F-1}^{(i)}(x,t)}{\prod_{s\in\lambda^{F}_{r}}\partial z_{s}} \right\vert , \\
& \left\vert\frac{\partial^{m+1} U_{T}(x,t)}{\partial b_{1}^{(j)}\partial x^{p}\partial t^{m-p}}\right\vert \leq \sum_{i=1}^{h_{F}} \left\vert \omega_{F+1}^{(1,i)} \right\vert \left\vert \frac{\partial^{m+1}N_{F}^{(i)}(x,t) }{\partial b_{1}^{(j)}\partial x^{p}\partial t^{m-p}} \right\vert ,
\end{split} \right.
\end{equation*}
for $j_{k}=1,\ldots ,h_{k}$ and $k=2,\ldots ,F-1$.

Define $M_{(j,1)}^{m+1}$, $M_{(j,2)}^{m+1}$ and $ M_{(j,3)}^{m+1}$ for all $m=0,\ldots ,n$ and $j=1,\ldots ,h_{1}$, given by the following recurrence equations
\begin{equation*}
\left\lbrace \begin{split}
& M_{(j,1)}^{\left( m+1 ,\left\vert \lambda_{l}^{2}\right\vert\right)} = \max_{x\in \Omega_{x}}\left\lbrace \prod_{s\in\lambda_{l}^{2}} \left\vert\delta_{(m+1)s} x +\sum_{i=p+1}^{m} \delta_{is} \omega_{1}^{(j,2)} + \sum_{i=1}^{p} \delta_{is}\omega_{1}^{(j,1)} \right\vert \right\rbrace, \\
& M_{(j,1)}^{\left( m+1 ,\left\vert \lambda_{l}^{k+1}\right\vert\right)} =  \sum_{q=1}^{\left\vert \lambda_{l}^{k+1}\right\vert} \sum_{\lambda^{k}\in P\left( \lambda_{l}^{k+1},q\right)} \prod_{r=1}^{q} \sum_{i=1}^{h_{k-1}} \left\vert\omega_{k}^{\left( j_{k},i\right)}\right\vert  M_{(j,1)}^{\left( m+1 ,\left\vert \lambda_{r}^{k}\right\vert\right)}, \\
& M_{(j,1)}^{(m+1,F)} = \sum_{q=1}^{m+1} \sum_{\lambda^{F}\in P\left( \bigcup_{a=1}^{m+1}\lbrace a\rbrace ,q\right)} \prod_{r=1}^{q} \sum_{i=1}^{h_{F-1}} \left\vert\omega_{F}^{\left( j_{F},i\right)}\right\vert M_{(j,1)}^{\left( m+1,\left\vert \lambda_{r}^{F}\right\vert\right)} ,\\
& M_{(j,1)}^{m+1} =\sum_{i=1}^{h_{F}} \left\vert \omega_{F+1}^{(1,i)}\right\vert M_{(j,1)}^{(m+1,F)} ,
\end{split} \right.
\end{equation*}
for $j_{k}=1,\ldots ,h_{k}$ and $k=2,\ldots ,F-1$.
\begin{equation*}
\left\lbrace \begin{split}
& M_{(j,2)}^{\left( m+1,\left\vert \lambda_{l}^{2}\right\vert\right)} = \max_{t\in \Omega_{t}}\left\lbrace \prod_{s\in\lambda_{l}^{2}} \left\vert\delta_{(m+1)s} t +\sum_{i=p+1}^{m} \delta_{is} \omega_{1}^{(j,2)} + \sum_{i=1}^{p} \delta_{is}\omega_{1}^{(j,1)} \right\vert \right\rbrace, \\
& M_{(j,2)}^{\left( m+1,\left\vert \lambda_{l}^{k+1}\right\vert\right)} =  \sum_{q=1}^{\left\vert \lambda_{l}^{k+1}\right\vert} \sum_{\lambda^{k}\in P\left( \lambda_{l}^{k+1},q\right)} \prod_{r=1}^{q} \sum_{i=1}^{h_{k-1}} \left\vert\omega_{k}^{\left( j_{k},i\right)}\right\vert  M_{(j,2)}^{\left( m+1 ,\left\vert \lambda_{r}^{k}\right\vert\right)} , \\
& M_{(j,2)}^{(m+1,F)} = \sum_{q=1}^{m+1} \sum_{\lambda^{F}\in P\left( \bigcup_{a=1}^{m+1}\lbrace a\rbrace ,q\right)} \prod_{r=1}^{q} \sum_{i=1}^{h_{F-1}} \left\vert\omega_{F}^{\left( j_{F},i\right)}\right\vert M_{(j,2)}^{\left(m+1,\left\vert \lambda_{r}^{F}\right\vert\right)} ,\\
& M_{(j,2)}^{m+1} =\sum_{i=1}^{h_{F}} \left\vert \omega_{F+1}^{(1,i)}\right\vert M_{(j,2)}^{(m+1,F)} ,
\end{split} \right.
\end{equation*}
for $j_{k}=1,\ldots ,h_{k}$ and $k=2,\ldots ,F-1$.
\begin{equation*}
\left\lbrace \begin{split}
& M_{(j,3)}^{\left( m+1,\left\vert \lambda_{l}^{2}\right\vert\right)} = \prod_{s\in\lambda_{l}^{2}} \left\vert\delta_{(m+1)s} +\sum_{i=p+1}^{m} \delta_{is} \omega_{1}^{(j,2)} + \sum_{i=1}^{p} \delta_{is}\omega_{1}^{(j,1)} \right\vert, \\
& M_{(j,3)}^{\left( m+1,\left\vert \lambda_{l}^{k+1}\right\vert\right)} =  \sum_{q=1}^{\left\vert \lambda_{l}^{k+1}\right\vert} \sum_{\lambda^{k}\in P\left( \lambda_{l}^{k+1},q\right)} \prod_{r=1}^{q} \sum_{i=1}^{h_{k-1}} \left\vert\omega_{k}^{\left( j_{k},i\right)}\right\vert  M_{(j,3)}^{\left( m+1,\left\vert \lambda_{r}^{k}\right\vert\right)} , \\
& M_{(j,3)}^{(m+1,F)} = \sum_{q=1}^{m+1} \sum_{\lambda^{F}\in P\left( \bigcup_{a=1}^{m+1}\lbrace a\rbrace ,q\right)} \prod_{r=1}^{q} \sum_{i=1}^{h_{F-1}} \left\vert\omega_{F}^{\left( j_{F},i\right)}\right\vert M_{(j,3)}^{\left( m+1,\left\vert \lambda_{r}^{F}\right\vert\right)} ,\\
& M_{(j,3)}^{m+1} =\sum_{i=1}^{h_{F}} \left\vert \omega_{F+1}^{(1,i)}\right\vert M_{(j,3)}^{(m+1,F)} ,
\end{split} \right.
\end{equation*}
for $j_{k}=1,\ldots ,h_{k}$ and $k=2,\ldots ,F-1$.

Thus, taking
\begin{equation*}
M= \max_{\substack{i\in\lbrace 1,2,3\rbrace,\\ j\in\lbrace 1,\ldots,h_{1}\rbrace,\\ m\in\lbrace 0,\ldots ,n\rbrace}} \left\lbrace  \frac{1}{M_{(j,i)}^{m+1}}\right\rbrace , 
\end{equation*}
Lemma $\ref{Propic}$ implies that there is $\delta_{\epsilon_{j}}>0$ such that if $\left\vert \omega_{1}^{(j,1)}x+\omega_{1}^{(j,2)}t+ b_{1}^{(j)}\right\vert > \delta_{\epsilon_{j}}$ then $\sigma ^{(m+1)}\left( \omega_{1}^{(j,1)}x+\omega_{1}^{(j,2)}t+ b_{1}^{(j)}\right) < \epsilon M$ for all $m=0,\ldots,n$. Therefore, for all $(x,t)\notin \left\lbrace (x,t)\in\Omega_{x}\times\Omega_{t} : \left\vert \omega_{1}^{(j,1)}x+\omega_{1}^{(j,2)}t+ b_{1}^{(j)}\right\vert < \delta_{\epsilon}\right\rbrace$, with $\delta_{\epsilon}=\min_{j\in\lbrace 1,\ldots,h_{1}\rbrace}\left\lbrace \delta_{\epsilon_{j}}\right\rbrace$, we have
\begin{align*}
\left\vert\frac{\partial^{m+1} U_{T}(x,t)}{\partial\omega_{1}^{(j,1)}\partial x^{p}\partial t^{m-p}}\right\vert & < \epsilon M M_{(j,1)}^{m+1} \leq \epsilon ,\\
\left\vert\frac{\partial^{m+1} U_{T}(x,t)}{\partial\omega_{1}^{(j,2)}\partial x^{p}\partial t^{m-p}}\right\vert & < \epsilon M M_{(j,2)}^{m+1}\leq \epsilon,\\
\left\vert\frac{\partial^{m+1} U_{T}(x,t)}{\partial b_{1}^{(j)}\partial x^{p}\partial t^{m-p}}\right\vert & < \epsilon M M_{(j,3)}^{m+1} \leq \epsilon,
\end{align*}
for all $0\leq p\leq m\leq n$ and $j=1,\ldots ,h_{1}$.
\end{proof}

Note that for all $k=2,\ldots, F+1$, it is possible to express $\tfrac{\partial^{m+1} U_{T}(x,t)}{\partial\omega_{k}^{(j,i)}\partial x^{p}\partial t^{m-p}}$ and $\tfrac{\partial^{m+1} U_{T}(x,t)}{\partial b_{k}^{(j)}\partial x^{p}\partial t^{m-p}}$ in a similar way as was done in the proof of Theorem \ref{Coric}, whenever $p\geq 1$ or $m\geq 1$, by deleting the terms that have $\delta_{(m+1)s}$ as a factor. By doing the same in the rest of the proof of Theorem \ref{Coric}, that is, rewriting the inequalities for $\left\vert\tfrac{\partial^{m+1} U_{T}(x,t)}{\partial\omega_{k}^{(j,i)}\partial x^{p}\partial t^{m-p}}\right\vert$ and $\left\vert\tfrac{\partial^{m+1} U_{T}(x,t)}{\partial b_{k}^{(j)}\partial x^{p}\partial t^{m-p}}\right\vert$, and the expressions for $M_{(j,1)}^{(m+1,k)}$ for each $k\in\lbrace 2,\ldots ,F+1\rbrace$, the following corollary is obtained.
\begin{corollary}
\label{Cortheo1}
Let $U_{T}(x,t)$ be given by Eq. $(\ref{NNs})$, $\epsilon \ll 1$ and $n\in\mathbb{N}$. For any $j\leq h_{1}$, there exists $\delta_{\epsilon_{j}}>0$ such that for all
\begin{equation*}
\label{dgzcor1}
(x,t)\notin \left\lbrace (x,t)\in\Omega_{x}\times\Omega_{t} : \left\vert \omega_{1}^{(j,1)}x+\omega_{1}^{(j,2)}t+ b_{1}^{(j)}\right\vert < \delta_{\epsilon_{j}}\right\rbrace ,
\end{equation*}
it holds that 
\begin{equation}
\label{CorTheo1}
\left\vert\frac{\partial^{m+1} U_{T}(x,t)}{\partial\omega_{k}^{(j,i)}\partial x^{p}\partial t^{m-p}}\right\vert <\epsilon \ \ \ \mathrm{and} \ \ \ \left\vert\frac{\partial^{m+1} U_{T}(x,t)}{\partial b_{k}^{(j)}\partial x^{p}\partial t^{m-p}}\right\vert <\epsilon ,
\end{equation}
for $i=1,2$, $k=2,\ldots, F+1$, $0\leq p \leq m$ and $1\leq m\leq n$.
\end{corollary}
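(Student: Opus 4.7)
The plan is to adapt the proof of Theorem \ref{Coric} by applying the multivariate Faà di Bruno formula to each of $\tfrac{\partial^{m+1} U_T(x,t)}{\partial \omega_k^{(j,i)} \partial x^p \partial t^{m-p}}$ and $\tfrac{\partial^{m+1} U_T(x,t)}{\partial b_k^{(j)} \partial x^p \partial t^{m-p}}$ for $k \ge 2$, under the standing restriction $1 \le m \le n$. The key structural observation is that, for $k \ge 2$, the layer-$1$ sigmoid argument $\omega_1^{(j,1)} x + \omega_1^{(j,2)} t + b_1^{(j)}$ is independent of the differentiation parameter, so the Kronecker-delta terms $\delta_{(m+1)s}$ that encoded the parameter derivative at layer $1$ in the proof of Theorem \ref{Coric} simply drop out of the layer-$1$ block of the recurrence, exactly as indicated in the preamble to the corollary. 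The parameter derivative is instead absorbed at layer $k$, producing an extra factor of $N_{k-1}^{(i)}(x,t)$ for weights and of $1$ for biases; everything else in the chain-rule expansion retains the same shape as in Theorem \ref{Coric}.

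The second step is to argue that every summand of the resulting expansion still carries a factor of $\sigma^{(q)}\!\left(\omega_1^{(j,1)} x + \omega_1^{(j,2)} t + b_1^{(j)}\right)$ for some $1 \le q \le n+1$. This is where the hypothesis $m \ge 1$ is essential: at least one genuine derivative with respect to $x$ or $t$ is present and must propagate through every layer of the chain, in particular through layer $1$, producing the required sigmoid-derivative factor. Once this is secured, I would majorize the remaining ingredients by constants: derivatives of $\sigma$ at layers $\ge 2$ are bounded by $1$, the domain $\Omega_x \times \Omega_t$ is bounded, the weights are fixed, and $\vert P(A,q)\vert = S(\vert A\vert,q)$ is finite. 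Collecting these into analogs $M_{(j,1)}^{(m+1,k)}, M_{(j,2)}^{(m+1,k)}, M_{(j,3)}^{(m+1,k)}$ of the constants from Theorem \ref{Coric} (via recurrences identical in form but with the layer-$k$ base case rewritten to reflect the new parameter factor) and taking a uniform maximum $M$ over $k,j,i,m$ yields a single majorizing constant.

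With $M$ in hand, I would invoke Lemma \ref{Propic} with tolerance $\epsilon / M$ to obtain, for each $j \le h_1$, a threshold $\delta_{\epsilon_j} > 0$ such that whenever $\vert \omega_1^{(j,1)} x + \omega_1^{(j,2)} t + b_1^{(j)}\vert > \delta_{\epsilon_j}$, the bound $\vert \sigma^{(q)}(\cdot)\vert < \epsilon / M$ holds for all $1 \le q \le n+1$. Substituting this back into the Faà di Bruno expansion and letting $\delta_\epsilon = \min_j \delta_{\epsilon_j}$ yields Eq.~(\ref{CorTheo1}) outside the stated slab. The main obstacle, as anticipated, is the combinatorial verification that the layer-$1$ sigmoid-derivative factor genuinely appears in every summand of the expansion when $m \ge 1$; intuitively any $x$- or $t$-derivative must reach layer $1$, but a careful induction on the partition structure of Faà di Bruno is needed to rule out that all spatial and temporal slots could be absorbed into the layer-$k$ parameter factor. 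Once this point is settled, the remainder of the proof is a mechanical rerun of the telescoping bounds from Theorem \ref{Coric}.
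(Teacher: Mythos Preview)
Your approach matches the paper's: both adapt the proof of Theorem \ref{Coric} by deleting the $\delta_{(m+1)s}$ terms from the layer-$1$ block of the Fa\`a di Bruno recurrence (since the differentiated parameter no longer lives at layer $1$), rewriting the constants $M_{(j,i)}^{(m+1,k)}$ accordingly, and then rerunning the telescoping bounds together with Lemma \ref{Propic}. Your write-up is in fact more explicit than the paper's brief preamble---in particular, your identification of why the hypothesis $m\geq 1$ is essential (so that at least one genuine $x$- or $t$-derivative must propagate through layer $1$ and produce the required $\sigma^{(q)}$ factor in every summand) is precisely the combinatorial point the paper leaves implicit.
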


Theorem $\ref{Coric}$ guarantees the existence of a region where all the partial derivatives $\tfrac{\partial^{m+1} U_{T}(x,t)}{\partial\omega_{1}^{(j,i)}\partial x^{p}\partial t^{m-p}}$ and $\tfrac{\partial^{m+1} U_{T}(x,t)}{\partial b_{1}^{(j)}\partial x^{p}\partial t^{m-p}}$ are negligible for $i=1,2$ and $0\leq p\leq m \leq n$. Meanwhile, Corollary $\ref{Cortheo1}$ guarantees, under certain conditions for the PDE, a similar result for all the derivatives $\tfrac{\partial^{m+1} U_{T}(x,t)}{\partial\omega_{k}^{(j,i)}\partial x^{p}\partial t^{m-p}}$ and $\tfrac{\partial^{m+1} U_{T}(x,t)}{\partial b_{k}^{(j)}\partial x^{p}\partial t^{m-p}}$ for $i=1,2$, $k=2,\ldots, F+1$, $0\leq p \leq m$ and $1\leq m\leq n$. These results allow a comparison to be made between the gradient of $Loss_{Total}$ in a sample $S$ over the whole domain of the PDE and the gradient of $Loss_{Total}$ in the sample $S-S_{\epsilon}$, which results from removing all the points in $S$ that are within the diminishing gradient zones of $\sigma\left(\omega_{1}^{(j,1)}x+\omega_{1}^{(j,2)}t+ b_{1}^{(j)}\right)$ for all $j=1,\ldots h_{1}$, stated in Theorem $\ref{theopde}$. The reason for making this comparison is to show that removing the points in the sample in which the gradient of $Loss_{Total}$ is negligible, represents an improvement in the optimization process. This is because they have the effect of reducing the step size with which the parameters are updated, slowing down the optimization process.

\begin{theor}
\label{theopde}
Let $\mathcal{F}\left( U,\nabla U,\ldots\right)=0$ a PDE with initial and boundary conditions over $\overline{\Omega_{x}\times\Omega_{t}}$, $\epsilon \ll 1$, $S$ a sample of points within the domain of $\mathcal{F}$, and $U_{T}(x,t)$ given by Eq. $(\ref{NNs})$. There are $\delta_{\epsilon_{j}}>0$ such that
\begin{equation}
\label{Theo2}
\left\vert\frac{\partial Loss_{Total}}{\partial\omega_{1}^{(j,i)}} \right\vert_{S}<\left\vert\frac{\partial Loss_{Total}}{\partial\omega_{1}^{(j,i)}} \right\vert_{S-S_{\epsilon}} \ \ \ and \ \ \ \left\vert\frac{\partial Loss_{Total}}{\partial b_{1}^{(j)}} \right\vert_{S}<\left\vert\frac{\partial Loss_{Total}}{\partial b_{1}^{(j)}} \right\vert_{S-S_{\epsilon}}
\end{equation}
for $i=1,2$ and $j=1,\ldots ,h_{1}$, where $S_{\epsilon}=S_{\epsilon_{IC}}\cup S_{\epsilon_{LBC}}\cup S_{\epsilon_{RBC}}\cup S_{\epsilon_{PDE}}$ is a set of the form
\begin{equation}
\label{Dgztheo2}
S_{\epsilon}= \left\lbrace (x,t)\in S^{\prime}\subset \overline{\Omega_{x}\times\Omega_{t}} : \left\vert \omega_{1}^{(j,1)}x+\omega_{1}^{(j,2)}t+ b_{1}^{(j)}\right\vert >\delta^{\prime}_{\epsilon_{j}} \ \forall j\leq h_{1}\right\rbrace ,
\end{equation}
with $S^{\prime}\in\left\lbrace S_{IC},S_{LBC},S_{RBC},S_{PDE}\right\rbrace$.
\end{theor}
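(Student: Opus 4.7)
The plan is to expand $\partial Loss_{Total}/\partial\omega_1^{(j,i)}$ partition by partition, isolate the portion coming from points in $S_\epsilon$ using Theorem~\ref{Coric}, and then exploit that the two gradients being compared differ only in (i) whether the negligible $S_\epsilon$ terms are present and (ii) the size of the MSE normalization $1/|S'|$ versus $1/|S'-S_{\epsilon_{S'}}|$.

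First I would write
\begin{equation*}
\frac{\partial Loss_{Total}}{\partial\omega_1^{(j,i)}}\bigg|_S=\sum_{S'}\frac{2}{|S'|}\sum_{(x,t)\in S'} R_{S'}(x,t)\,\frac{\partial R_{S'}}{\partial\omega_1^{(j,i)}}(x,t),
\end{equation*}
where the outer sum runs over $S'\in\{S_{IC},S_{LBC},S_{RBC},S_{PDE}\}$ and $R_{S'}$ is the residual attached to that partition ($U_T$ minus the prescribed data on the IC/BC partitions, and $\mathcal{F}(U_T,\nabla U_T,\ldots)$ on the PDE partition). Because $R_{S'}$ involves only partial derivatives of $U_T$ of order at most $n$, the chain rule expresses $\partial R_{S'}/\partial\omega_1^{(j,i)}$ as a linear combination of the mixed partials $\partial^{m+1}U_T/(\partial\omega_1^{(j,i)}\partial x^p\partial t^{m-p})$ with $0\leq p\leq m\leq n$, the coefficients being partial derivatives of $\mathcal{F}$ (trivially $1$ on the data partitions). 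By Eq.~(\ref{Dgztheo2}), every $(x,t)\in S_\epsilon$ lies in the diminishing-gradient zone of every first-layer neuron, so Theorem~\ref{Coric} bounds each such mixed partial by $\epsilon$. Combined with the boundedness of $R_{S'}$ on the compact domain, this yields $|R_{S'}\cdot\partial R_{S'}/\partial\omega_1^{(j,i)}|(x,t)\leq C\epsilon$ uniformly on $S_\epsilon$, with $C$ depending only on the PDE and the current network weights.

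Next I would split the inner sum over each partition $S'$ into the pieces on $S_{\epsilon_{S'}}$ and on $S'-S_{\epsilon_{S'}}$ and rewrite
\begin{equation*}
\frac{\partial Loss_{Total}}{\partial\omega_1^{(j,i)}}\bigg|_S=\sum_{S'}\frac{|S'-S_{\epsilon_{S'}}|}{|S'|}\Bigl(\frac{2}{|S'-S_{\epsilon_{S'}}|}\sum_{(x,t)\in S'-S_{\epsilon_{S'}}}R_{S'}\frac{\partial R_{S'}}{\partial\omega_1^{(j,i)}}\Bigr)+E,
\end{equation*}
with $|E|=O(\epsilon)$. The parenthesized sum is exactly the $S'$-contribution to $\partial Loss_{Total}/\partial\omega_1^{(j,i)}|_{S-S_\epsilon}$, and the prefactor $|S'-S_{\epsilon_{S'}}|/|S'|$ is strictly less than one whenever $S_{\epsilon_{S'}}$ is non-empty. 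Hence each partition's contribution to the $S$-gradient is a strict contraction of its contribution to the $(S-S_\epsilon)$-gradient, up to an $O(\epsilon)$ error; the analogous estimate for $b_1^{(j)}$ follows from the bias-derivative half of Theorem~\ref{Coric}.

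The main obstacle is closing out the strict inequality at the level of the total (rather than partition by partition): because the per-partition gradients can in principle carry opposite signs, summing strictly-contracted components is not automatically magnitude-monotone. I would address this by choosing the $\delta'_{\epsilon_j}$ in the statement small enough that the total $O(\epsilon)$ error is dominated by the strict slack $\sum_{S'}(1-|S'-S_{\epsilon_{S'}}|/|S'|)$ times the minimum absolute value of the per-partition gradients on $S-S_\epsilon$; this is exactly the regime in which the stratified-sampling algorithm of the next section is deployed, and it pins down the concrete value of $\delta'_{\epsilon_j}$ promised in the statement.
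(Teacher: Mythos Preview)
Your approach matches the paper's: expand $\partial Loss_{Total}/\partial\omega_1^{(j,i)}$ partition by partition, bound the $S_\epsilon$ contributions via Theorem~\ref{Coric} (the paper does this explicitly by fixing $\epsilon_{IC},\epsilon_{LBC},\epsilon_{RBC},\epsilon_{PDE}$ so that each piece contributes at most $\epsilon/4$), split each inner sum, and then compare the two MSE normalizations $1/|S'|$ versus $1/|S'-S_{\epsilon_{S'}}|$. You are also right to flag the sign obstacle in the closing step---the paper does not resolve it rigorously either, writing only ``if $\Omega_x\times\Omega_t$ is relatively large, it is possible to take $\epsilon$ small enough'' before passing from the $1/|S'|$-normalized sum plus~$\epsilon$ to the $1/|S'-S_{\epsilon_{S'}}|$-normalized one.

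One correction to your final paragraph: making $\delta'_{\epsilon_j}$ \emph{small} does enlarge $S_\epsilon$ and hence the slack $1-|S'-S_{\epsilon_{S'}}|/|S'|$, but it simultaneously destroys the $O(\epsilon)$ bound coming from Theorem~\ref{Coric}, since Lemma~\ref{Propic} ties a small derivative bound to a \emph{large} threshold $\delta_{\epsilon_n}=\ln((1-\epsilon_n)/\epsilon_n)$. The paper's ``relatively large domain'' hypothesis is precisely what lets both requirements coexist: one keeps $\delta'_{\epsilon_j}$ large enough for Theorem~\ref{Coric} to deliver a genuinely small $\epsilon$, while the domain is so much larger still that most sample points fall into $S_\epsilon$ anyway, making the inflation factors $|S'|/|S'-S_{\epsilon_{S'}}|$ uniformly large. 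Your resolution should therefore invoke the large-domain regime rather than a small-$\delta'_{\epsilon_j}$ limit.
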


\begin{proof}
Let $\mathcal{F}\left( U,\nabla U,\ldots\right)=0$ a PDE over $\Omega_{x}\times\Omega_{t}$, with initial condition $U_{0}(x)=U\left( x,t_{0}\right)$ and boundary conditions. For simplicity, we will denote $\mathcal{F}(U,\nabla U,\ldots)$ as $\mathcal{F}(U)$. Let $\epsilon \ll 1$, a sample of points $S$ within $\overline{\Omega_{x}\times\Omega_{t}}$ and $U_{T}(x,t)$ given by Eq. $(\ref{NNs})$. Considering the partition of $S$ given by Eq. $(\ref{partS})$, from Eq. $(\ref{Loss})$ it follows that
\begin{equation}
\begin{split}
\label{LossT}
    \left\vert\frac{\partial Loss_{Total}}{\partial\omega_{1}^{(j,i)}} \right\vert & = \left\vert\frac{\partial Loss_{IC}}{\partial\omega_{1}^{(j,i)}} +\frac{\partial Loss_{LBC}}{\partial\omega_{1}^{(j,i)}} + \frac{\partial Loss_{RBC}}{\partial\omega_{1}^{(j,i)}} + \frac{\partial Loss_{PDE}}{\partial\omega_{1}^{(j,i)}} \right\vert ,\\
    \left\vert\frac{\partial Loss_{Total}}{\partial b_{1}^{(j)}} \right\vert & = \left\vert\frac{\partial Loss_{IC}}{\partial b_{1}^{(j)}} + \frac{\partial Loss_{LBC}}{\partial b_{1}^{(j)}} + \frac{\partial Loss_{RBC}}{\partial b_{1}^{(j)}} + \frac{\partial Loss_{PDE}}{\partial b_{1}^{(j)}} \right\vert .
\end{split}
\end{equation}

The partial derivatives of $Loss_{IC}$ involved in Eq. $(\ref{LossT})$ can be expressed as
\begin{equation}
\label{theo_IC}
\begin{split}
\frac{\partial Loss_{IC}}{\partial\omega_{1}^{(j,i)}} & = \frac{2}{\left\vert S_{IC}\right\vert} \sum_{\left(x,t_{0}\right)\in S_{IC}} \left( U_{T}\left( x,t_{0}\right)-U_{0}(x)\right) \frac{\partial U_{T}\left(x,t_{0}\right)}{\partial\omega_{1}^{(j,i)}}, \\
\frac{\partial Loss_{IC}}{\partial b_{1}^{(j)}} & = \frac{2}{\left\vert S_{IC}\right\vert} \sum_{\left(x,t_{0}\right)\in S_{IC}} \left( U_{T}\left( x,t_{0}\right)-U_{0}(x)\right) \frac{\partial U_{T}\left(x,t_{0}\right)}{\partial b_{1}^{(j)}} ,
\end{split}
\end{equation}
 
for $i=1,2$ and $j=1,\ldots ,h_{1}$. Taking
\begin{equation}
\label{E_IC}
\epsilon_{IC} = \frac{\left\vert S_{IC}\right\vert \epsilon}{8\sum_{\left(x,t_{0}\right)\in S_{IC}}\left\vert U_{T}\left( x,t_{0}\right)-U_{0}(x)\right\vert},
\end{equation}
Theorem \ref{Coric} guarantees that there exists $\delta_{\epsilon_{IC_{j}}}>0$ such that for all
\begin{equation*}
\left( x,t_{0}\right)\notin D_{\epsilon_{IC}}=\left\lbrace \left( x,t_{0}\right)\in\Omega_{x}\times\left\lbrace t_{0}\right\rbrace : \left\vert \omega_{1}^{(j,1)}x+\omega_{1}^{(j,2)}t_{0}+ b_{1}^{(j)}\right\vert < \delta_{\epsilon_{IC_{j}}}, j=1,\ldots,h_{1}\right\rbrace ,
\end{equation*}
it holds that 
\begin{equation}
\label{PICtheo2}
\left\vert\frac{\partial U_{T}\left(x,t_{0}\right)}{\partial\omega_{1}^{(j,i)}}\right\vert<\epsilon_{IC} \ \mathrm{and} \ \left\vert\frac{\partial U_{T}\left(x,t_{0}\right)}{\partial b_{1}^{(j)}}\right\vert<\epsilon_{IC}, 
\end{equation}
for $i=1,2$ and $j=1,\ldots ,h_{1}$. Therefore, the absolute value of all partial derivatives $\tfrac{\partial U_{T}\left(x,t_{0}\right)}{\partial\omega_{1}^{(j,i)}}$ and $\tfrac{\partial U_{T}\left(x,t_{0}\right)}{\partial b_{1}^{(j)}}$ in Eq. $(\ref{theo_IC})$ are bounded by $\epsilon_{IC}$ for all $\left( x,t_{0}\right)\in S_{\epsilon_{IC}}=S_{IC}\cap D_{\epsilon_{IC}}^{c}$.

Similarly, it is possible to bound the absolute value of all the mixed partial derivatives of $U_{T}(x,t)$ involved in $\frac{\partial Loss_{PDE}}{\partial\omega_{1}^{(j,i)}}$ and $\frac{\partial Loss_{PDE}}{\partial b_{1}^{(j)}}$ for a subset of $S_{PDE}$. Assuming $\mathcal{F}\left( U(x,t)\right)$ is a PDE of order $n$, the partial derivatives of $Loss_{PDE}$ are
\begin{equation}
\label{theo_PDE}
\begin{split}
    \frac{\partial Loss_{PDE}}{\partial\omega_{1}^{(j,i)}} & = \frac{2}{\left\vert S_{PDE}\right\vert} \sum_{(x,t)\in S_{PDE}}  \mathcal{F}\left( U_{T}(x,t)\right) \frac{\partial \mathcal{F}\left( U_{T}(x,t)\right)}{\partial\omega_{1}^{(j,i)}}, \\
    \frac{\partial Loss_{PDE}}{\partial b_{1}^{(j)}} & = \frac{2}{\left\vert S_{PDE}\right\vert}\sum_{(x,t)\in S_{PDE}} \mathcal{F}\left( U_{T}(x,t)\right) \frac{\partial \mathcal{F}\left( U_{T}(x,t)\right)}{\partial b_{1}^{(j)}} ,
\end{split}
\end{equation}

for $i=1,2$ and $j=1,\ldots ,h_{1}$. Every term in $\tfrac{\partial \mathcal{F}\left( U_{T}(x,t)\right)}{\partial\omega_{1}^{(j,i)}}$ and $\tfrac{\partial \mathcal{F}\left( U_{T}(x,t)\right)}{\partial b_{1}^{(j)}}$ contains a factor of the form $\tfrac{\partial^{m+1} U_{T}(x,t)}{\partial\omega_{1}^{(j,i)}\partial x^{p}\partial t^{m-p}}$ or $\tfrac{\partial^{m+1} U_{T}(x,t)}{\partial b_{1}^{(j)}\partial x^{p}\partial t^{m-p}}$ for some $m\in\lbrace 0,\ldots,n\rbrace$ and $0\leq p\leq m$, for $i=1,2$. Define $\mathcal{G}\left( U_{T}(x,t) \right)$ as the expression resulting from considering the sum of the absolute values of the terms obtained by removing exactly one of the factors $\tfrac{\partial^{m+1} U_{T}(x,t)}{\partial\omega_{1}^{(j,i)}\partial x^{p}\partial t^{m-p}}$ or $\tfrac{\partial^{m+1} U_{T}(x,t)}{\partial b_{1}^{(j)}\partial x^{p}\partial t^{m-p}}$ from each term in $\tfrac{\partial \mathcal{F}\left( U_{T}(x,t)\right)}{\partial\omega_{1}^{(j,i)}}$ or $\tfrac{\partial \mathcal{F}\left( U_{T}(x,t)\right)}{\partial b_{1}^{(j)}}$, respectively. If $\mathcal{F}\left( U_{T}(x,t)\right)\neq 0$, consider 
\begin{equation}
\label{EP_PDE}
\epsilon_{PDE}^{\prime} = \frac{\left\vert S_{PDE}\right\vert \epsilon}{8 \sum_{(x,t)\in S_{PDE}} \left\vert \mathcal{F}\left( U_{T}(x,t) \right)\right\vert \max_{(x,t)\in S_{PDE}}\left\lbrace \mathcal{G}\left( U_{T}(x,t) \right)\right\rbrace },
\end{equation}
Theorem \ref{Coric} allows to guarantee that there are $\delta_{\epsilon_{PDE_{j}}}>0$ such that for all
\begin{equation*}
(x,t)\notin D_{\epsilon_{PDE}}=\left\lbrace (x,t)\in\Omega_{x}\times\Omega_{t} : \left\vert \omega_{1}^{(j,1)}x+\omega_{1}^{(j,2)}t+ b_{1}^{(j)}\right\vert < \delta_{\epsilon_{PDE_{j}}},j=1,\ldots,h_{1}\right\rbrace ,
\end{equation*}
it holds that
\begin{equation*}
\left\vert\frac{\partial^{m+1} U_{T}(x,t)}{\partial\omega_{1}^{(j,i)}\partial x^{p}\partial t^{m-p}}\right\vert<\epsilon^{\prime}_{PDE} \ \mathrm{and} \ \left\vert\frac{\partial^{m+1} U_{T}(x,t)}{\partial b_{1}^{(j)}\partial x^{p}\partial t^{m-p}}\right\vert<\epsilon^{\prime}_{PDE} ,   
\end{equation*}
for all $(x,t)\in S_{\epsilon_{PDE}}=S_{PDE}\cap D_{\epsilon_{PDE}}$, for $i=1,2$ and $j=1,\ldots ,h_{1}$. Therefore, consider 
\begin{equation}
\label{E_PDE}
\epsilon_{PDE} = \epsilon_{PDE}^{\prime} \max_{(x,t)\in S_{PDE}}\left\lbrace \mathcal{G}\left( U_{T}(x,t) \right)\right\rbrace = \frac{\left\vert S_{PDE}\right\vert \epsilon}{8 \sum_{(x,t)\in S_{PDE}} \left\vert \mathcal{F}\left( U_{T}(x,t) \right)\right\vert },
\end{equation}
implies that
\begin{equation}
\label{PPDEtheo2}
\begin{split}
    \left\vert\frac{\partial \mathcal{F}\left( U_{T}(x,t)\right)}{\partial\omega_{1}^{(j,i)}}\right\vert & < \max_{\substack{(x,t)\in S_{PDE},\\ m\in\lbrace 0,\ldots ,n\rbrace,\\ p\in\lbrace 0,\ldots ,m\rbrace}}\left\lbrace \left\vert\frac{\partial^{m+1} U_{T}(x,t)}{\partial\omega_{1}^{(j,i)}\partial x^{p}\partial t^{m-p}}\right\vert\right\rbrace \max_{(x,t)\in S_{PDE}}\left\lbrace \mathcal{G}\left( U_{T}(x,t) \right)\right\rbrace \\
    & < \epsilon_{PDE}^{\prime} \max_{(x,t)\in S_{PDE}}\left\lbrace \mathcal{G}\left( U_{T}(x,t) \right)\right\rbrace \\
    & = \epsilon_{PDE}, \\
    \left\vert\frac{\partial \mathcal{F}\left( U_{T}(x,t)\right)}{\partial b_{1}^{(j)}}\right\vert & < \max_{\substack{(x,t)\in S_{PDE},\\ m\in\lbrace 0,\ldots ,n\rbrace,\\ p\in\lbrace 0,\ldots ,m\rbrace}}\left\lbrace \left\vert\frac{\partial^{m+1} U_{T}(x,t)}{\partial b_{1}^{(j)}\partial x^{p}\partial t^{m-p}}\right\vert\right\rbrace \max_{(x,t)\in S_{PDE}}\left\lbrace \mathcal{G}\left( U_{T}(x,t) \right)\right\rbrace \\
    & < \epsilon_{PDE}^{\prime} \max_{(x,t)\in S_{PDE}}\left\lbrace \mathcal{G}\left( U_{T}(x,t) \right)\right\rbrace \\
    & = \epsilon_{PDE},
\end{split}
\end{equation}
for all $(x,t)\in S_{\epsilon_{PDE}}=S_{PDE}\cap D_{\epsilon_{PDE}}$, for $i=1,2$ and $j=1,\ldots ,h_{1}$.

For Dirichlet boundary conditions, the partial derivatives of $Loss_{LBC}$ and $Loss_{RBC}$ are similar to Eq. $(\ref{theo_IC})$, while for Neumann boundary conditions, the partial derivatives of $Loss_{LBC}$ and $Loss_{RBC}$ are similar to Eq. $(\ref{theo_PDE})$. Therefore, following the same analysis conducted for the derivatives involved in Eq. $(\ref{theo_IC})$ or Eq. $(\ref{theo_PDE})$, a result similar to Eq. $(\ref{PICtheo2})$ or Eq. $(\ref{PPDEtheo2})$ for the boundary conditions is achieved. For this, take $\mathcal{H}_{L}\left( U_{T}\left(x_{L},t\right)\right)=U\left(x_{L},t\right)-U_{T}\left(x_{L},t\right)$ if the left boundary condition is Dirichlet, and $\mathcal{H}_{L}\left( U_{T}\left( x_{L},t\right)\right)= \mathcal{F}_{L}\left(U_{T}\left(x_{L},t\right),\tfrac{\partial U_{T}\left(x_{L},t\right)}{\partial t}\right)$ if is Neumann, assuming that the solution of the PDE satisfies $\mathcal{F}_{L}\left(U\left(x_{L},t\right),\tfrac{\partial U\left(x_{L},t\right)}{\partial t}\right)=0$. Similarly, define $\mathcal{H}_{R}\left( U_{T}\left( x_{R},t\right)\right)$ considering the right boundary condition. The partial derivatives of $Loss_{LBC}$ and $Loss_{RBC}$ involved in Eq. $(\ref{LossT})$ are given by
\begin{equation*}
\begin{split}
    \frac{\partial Loss_{LBC}}{\partial\omega_{1}^{(j,i)}} & = \frac{2}{\left\vert S_{LBC}\right\vert} \sum_{\left( x_{L},t\right)\in S_{LBC}}  \mathcal{H}_{L}\left( U_{T}\left( x_{L},t\right)\right) \frac{\partial \mathcal{H}_{L}\left( U_{T}\left( x_{L},t\right)\right)}{\partial\omega_{1}^{(j,i)}}, \\
    \frac{\partial Loss_{LBC}}{\partial b_{1}^{(j)}} & = \frac{2}{\left\vert S_{LBC}\right\vert}\sum_{\left( x_{L},t\right)\in S_{LBC}} \mathcal{H}_{L}\left( U_{T}\left( x_{L},t\right)\right) \frac{\partial \mathcal{H}_{L}\left( U_{T}\left( x_{L},t\right)\right)}{\partial b_{1}^{(j)}},\\
    \frac{\partial Loss_{RBC}}{\partial\omega_{1}^{(j,i)}} & = \frac{2}{\left\vert S_{RBC}\right\vert} \sum_{\left( x_{R},t\right)\in S_{RBC}}  \mathcal{H}_{R}\left( U_{T}\left( x_{R},t\right)\right) \frac{\partial \mathcal{H}_{R}\left( U_{T}\left( x_{R},t\right)\right)}{\partial\omega_{1}^{(j,i)}}, \\
    \frac{\partial Loss_{RBC}}{\partial b_{1}^{(j)}} & = \frac{2}{\left\vert S_{RBC}\right\vert}\sum_{\left( x_{R},t\right)\in S_{RBC}} \mathcal{H}_{R}\left( U_{T}\left( x_{R},t\right)\right) \frac{\partial \mathcal{H}_{R}\left( U_{T}\left( x_{R},t\right)\right)}{\partial b_{1}^{(j)}},
\end{split}
\end{equation*}
for $i=1,2$ and $j=1,\ldots ,h_{1}$. Take $\epsilon_{LBC}$ and $\epsilon_{RBC}$ similarly to Eq. $(\ref{E_IC})$ if the corresponding boundary condition is of the Dirichlet type, and $\epsilon_{LBC}^{\prime}$ and $\epsilon_{RBC}^{\prime}$ as in Eq. $(\ref{EP_PDE})$, $\epsilon_{LBC}$ and $\epsilon_{RBC}$ similar to Eq. $(\ref{E_PDE})$  if the corresponding boundary condition is of the Neumann type. Theorem \ref{Coric} implies that there exists $\delta_{\epsilon_{LBC_{j}}}>0$ and $\delta_{\epsilon_{RBC_{j}}}>0$ such that for all
\begin{align*}
&\left(x_{L},t\right)\notin D_{\epsilon_{LBC}}=\left\lbrace \left(x_{L},t\right)\in\left\lbrace x_{L}\right\rbrace\times\Omega_{t} : \left\vert \omega_{1}^{(j,1)}x_{L}+\omega_{1}^{(j,2)}t+ b_{1}^{(j)}\right\vert < \delta_{\epsilon_{LBC_{j}}},j=1,\ldots,h_{1}\right\rbrace ,\\
&\left(x_{R},t\right)\notin D_{\epsilon_{RBC}}=\left\lbrace \left(x_{R},t\right)\in\left\lbrace x_{R}\right\rbrace\times\Omega_{t} : \left\vert \omega_{1}^{(j,1)}x_{R}+\omega_{1}^{(j,2)}t+ b_{1}^{(j)}\right\vert < \delta_{\epsilon_{RBC_{j}}},j=1,\ldots,h_{1}\right\rbrace ,
\end{align*}
it holds that
\begin{equation}
\begin{split}
\label{PLBCtheo2}
&\left\vert\frac{\partial \mathcal{H}_{L}\left( U_{T}\left( x_{L},t\right)\right)}{\partial\omega_{1}^{(j,i)}}\right\vert < \epsilon_{LBC} \ \mathrm{and} \  \left\vert\frac{\partial \mathcal{H}_{L}\left( U_{T}\left( x_{L},t\right)\right)}{\partial b_{1}^{(j)}}\right\vert < \epsilon_{LBC},\\
&\left\vert\frac{\partial \mathcal{H}_{R}\left( U_{T}\left( x_{R},t\right)\right)}{\partial\omega_{1}^{(j,i)}}\right\vert < \epsilon_{RBC} \ \mathrm{and} \  \left\vert\frac{\partial \mathcal{H}_{R}\left( U_{T}\left( x_{R},t\right)\right)}{\partial b_{1}^{(j)}}\right\vert < \epsilon_{RBC},
\end{split}
\end{equation}
for all $\left( x_{R},t\right)\in S_{\epsilon_{RBC}}=S\cap D_{\epsilon_{RBC}}$ for $i=1,2$ and $j=1,\ldots ,h_{1}$, respectively.

Considering $S_{\epsilon}=S_{\epsilon_{IC}}\cup S_{\epsilon_{LBC}}\cup S_{\epsilon_{RBC}}\cup S_{\epsilon_{PDE}}$. From Eq. $(\ref{LossT})$ and the bounds found  in Eqns. $(\ref{PICtheo2})$, $(\ref{PPDEtheo2})$ and $(\ref{PLBCtheo2})$, it follows that
\begin{equation*}
\begin{split}
    \left\vert\frac{\partial Loss_{Total}}{\partial\omega_{1}^{(j,i)}} \right\vert_{S} & = \left\vert\frac{\partial Loss_{IC}}{\partial\omega_{1}^{(j,i)}} +\frac{\partial Loss_{LBC}}{\partial\omega_{1}^{(j,i)}} + \frac{\partial Loss_{RBC}}{\partial\omega_{1}^{(j,i)}} + \frac{\partial Loss_{PDE}}{\partial\omega_{1}^{(j,i)}} \right\vert_{S} ,\\
    & = \left\vert \frac{2}{\left\vert S_{IC}\right\vert} \sum_{\left(x,t_{0}\right)\in S_{IC}} \left( U_{T}\left( x,t_{0}\right)-U_{0}(x)\right) \frac{\partial U_{T}\left(x,t_{0}\right)}{\partial\omega_{1}^{(j,i)}} \right.\\
    & \ \ \ \ \ + \frac{2}{\left\vert S_{LBC}\right\vert} \sum_{\left( x_{L},t\right)\in S_{LBC}}  \mathcal{H}_{L}\left( U_{T}\left( x_{L},t\right)\right) \frac{\partial \mathcal{H}_{L}\left( U_{T}\left( x_{L},t\right)\right)}{\partial\omega_{1}^{(j,i)}} \\
    & \ \ \ \ \ + \frac{2}{\left\vert S_{RBC}\right\vert} \sum_{\left( x_{R},t\right)\in S_{RBC}}  \mathcal{H}_{R}\left( U_{T}\left( x_{R},t\right)\right) \frac{\partial \mathcal{H}_{R}\left( U_{T}\left( x_{R},t\right)\right)}{\partial\omega_{1}^{(j,i)}}\\
    & \ \ \ \ \ \left. + \frac{2}{\left\vert S_{PDE}\right\vert} \sum_{(x,t)\in S_{PDE}}  \mathcal{F}\left( U_{T}(x,t)\right) \frac{\partial \mathcal{F}\left( U_{T}(x,t)\right)}{\partial\omega_{1}^{(j,i)}} \right\vert \\
    & \leq  \left\vert \frac{2}{\left\vert S_{IC}\right\vert} \sum_{\left(x,t_{0}\right)\in S_{IC}-S_{\epsilon_{IC}}} \left( U_{T}\left( x,t_{0}\right)-U_{0}(x)\right) \frac{\partial U_{T}\left(x,t_{0}\right)}{\partial\omega_{1}^{(j,i)}} \right.\\
    & \ \ \ \ \ + \frac{2}{\left\vert S_{LBC}\right\vert} \sum_{\left( x_{L},t\right)\in S_{LBC}-S_{\epsilon_{LBC}}}  \mathcal{H}_{L}\left( U_{T}\left( x_{L},t\right)\right) \frac{\partial \mathcal{H}_{L}\left( U_{T}\left( x_{L},t\right)\right)}{\partial\omega_{1}^{(j,i)}} \\
    & \ \ \ \ \ + \frac{2}{\left\vert S_{RBC}\right\vert} \sum_{\left( x_{R},t\right)\in S_{RBC}-S_{\epsilon_{RBC}}}  \mathcal{H}_{R}\left( U_{T}\left( x_{R},t\right)\right) \frac{\partial \mathcal{H}_{R}\left( U_{T}\left( x_{R},t\right)\right)}{\partial\omega_{1}^{(j,i)}}\\
    & \ \ \ \ \ \left. + \frac{2}{\left\vert S_{PDE}\right\vert} \sum_{(x,t)\in S_{PDE}-S_{\epsilon_{PDE}}}  \mathcal{F}\left( U_{T}(x,t)\right) \frac{\partial \mathcal{F}\left( U_{T}(x,t)\right)}{\partial\omega_{1}^{(j,i)}} \right\vert \\
    & \ \ \ \ \ + \frac{2}{\left\vert S_{IC}\right\vert} \sum_{\left(x,t_{0}\right)\in S_{\epsilon_{IC}}} \left\vert U_{T}\left( x,t_{0}\right)-U_{0}(x)\right\vert \left\vert\frac{\partial U_{T}\left(x,t_{0}\right)}{\partial\omega_{1}^{(j,i)}} \right\vert \\
    & \ \ \ \ \ + \frac{2}{\left\vert S_{LBC}\right\vert} \sum_{\left( x_{L},t\right)\in S_{\epsilon_{LBC}}}  \left\vert \mathcal{H}_{L}\left( U_{T}\left( x_{L},t\right)\right)\right\vert \left\vert \frac{\partial \mathcal{H}_{L}\left( U_{T}\left( x_{L},t\right)\right)}{\partial\omega_{1}^{(j,i)}}\right\vert\\
    & \ \ \ \ \ + \frac{2}{\left\vert S_{RBC}\right\vert} \sum_{\left( x_{R},t\right)\in S_{\epsilon_{RBC}}} \left\vert \mathcal{H}_{R}\left( U_{T}\left( x_{R},t\right)\right) \right\vert \left\vert \frac{\partial \mathcal{H}_{R}\left( U_{T}\left( x_{R},t\right)\right)}{\partial\omega_{1}^{(j,i)}}\right\vert
\end{split}
\end{equation*}
\begin{equation*}
\begin{split}
    \hspace{2.1cm}& \ \ \ \ \ + \frac{2}{\left\vert S_{PDE}\right\vert} \sum_{(x,t)\in S_{\epsilon_{PDE}}} \left\vert \mathcal{F}\left( U_{T}(x,t)\right) \right\vert \left\vert \frac{\partial \mathcal{F}\left( U_{T}(x,t)\right)}{\partial\omega_{1}^{(j,i)}} \right\vert \\
    & < \left\vert \frac{2}{\left\vert S_{IC}\right\vert} \sum_{\left(x,t_{0}\right)\in S_{IC}-S_{\epsilon_{IC}}} \left( U_{T}\left( x,t_{0}\right)-U_{0}(x)\right) \frac{\partial U_{T}\left(x,t_{0}\right)}{\partial\omega_{1}^{(j,i)}} \right.\\
    & \ \ \ \ \ + \frac{2}{\left\vert S_{LBC}\right\vert} \sum_{\left( x_{L},t\right)\in S_{LBC}-S_{\epsilon_{LBC}}}  \mathcal{H}_{L}\left( U_{T}\left( x_{L},t\right)\right) \frac{\partial \mathcal{H}_{L}\left( U_{T}\left( x_{L},t\right)\right)}{\partial\omega_{1}^{(j,i)}} \\
    & \ \ \ \ \ + \frac{2}{\left\vert S_{RBC}\right\vert} \sum_{\left( x_{R},t\right)\in S_{RBC}-S_{\epsilon_{RBC}}}  \mathcal{H}_{R}\left( U_{T}\left( x_{R},t\right)\right) \frac{\partial \mathcal{H}_{R}\left( U_{T}\left( x_{R},t\right)\right)}{\partial\omega_{1}^{(j,i)}}\\
    & \ \ \ \ \ \left. + \frac{2}{\left\vert S_{PDE}\right\vert} \sum_{(x,t)\in S_{PDE}-S_{\epsilon_{PDE}}}  \mathcal{F}\left( U_{T}(x,t)\right) \frac{\partial \mathcal{F}\left( U_{T}(x,t)\right)}{\partial\omega_{1}^{(j,i)}} \right\vert \\
    & \ \ \ \ \ + \frac{2}{\left\vert S_{IC}\right\vert} \sum_{\left(x,t_{0}\right)\in S_{\epsilon_{IC}}} \left\vert U_{T}\left( x,t_{0}\right)-U_{0}(x)\right\vert \epsilon_{IC} + \frac{2}{\left\vert S_{LBC}\right\vert} \sum_{\left( x_{L},t\right)\in S_{\epsilon_{LBC}}}  \left\vert \mathcal{H}_{L}\left( U_{T}\left( x_{L},t\right)\right)\right\vert \epsilon_{LBC} \\
    & \ \ \ \ \ + \frac{2}{\left\vert S_{RBC}\right\vert} \sum_{\left( x_{R},t\right)\in S_{\epsilon_{RBC}}} \left\vert \mathcal{H}_{R}\left( U_{T}\left( x_{R},t\right)\right) \right\vert \epsilon_{RBC}  + \frac{2}{\left\vert S_{PDE}\right\vert} \sum_{(x,t)\in S_{\epsilon_{PDE}}} \left\vert \mathcal{F}\left( U_{T}(x,t)\right) \right\vert \epsilon_{PDE} \\
    & \leq \left\vert \frac{2}{\left\vert S_{IC}\right\vert} \sum_{\left(x,t_{0}\right)\in S_{IC}-S_{\epsilon_{IC}}} \left( U_{T}\left( x,t_{0}\right)-U_{0}(x)\right) \frac{\partial U_{T}\left(x,t_{0}\right)}{\partial\omega_{1}^{(j,i)}} \right.\\
    & \ \ \ \ \ + \frac{2}{\left\vert S_{LBC}\right\vert} \sum_{\left( x_{L},t\right)\in S_{LBC}-S_{\epsilon_{LBC}}}  \mathcal{H}_{L}\left( U_{T}\left( x_{L},t\right)\right) \frac{\partial \mathcal{H}_{L}\left( U_{T}\left( x_{L},t\right)\right)}{\partial\omega_{1}^{(j,i)}} \\
    & \ \ \ \ \ + \frac{2}{\left\vert S_{RBC}\right\vert} \sum_{\left( x_{R},t\right)\in S_{RBC}-S_{\epsilon_{RBC}}}  \mathcal{H}_{R}\left( U_{T}\left( x_{R},t\right)\right) \frac{\partial \mathcal{H}_{R}\left( U_{T}\left( x_{R},t\right)\right)}{\partial\omega_{1}^{(j,i)}}\\
    & \ \ \ \ \ \left. + \frac{2}{\left\vert S_{PDE}\right\vert} \sum_{(x,t)\in S_{PDE}-S_{\epsilon_{PDE}}}  \mathcal{F}\left( U_{T}(x,t)\right) \frac{\partial \mathcal{F}\left( U_{T}(x,t)\right)}{\partial\omega_{1}^{(j,i)}} \right\vert + \frac{\epsilon}{4} + \frac{\epsilon}{4} + \frac{\epsilon}{4} + \frac{\epsilon}{4}\\
    & = \left\vert \frac{2}{\left\vert S_{IC}\right\vert} \sum_{\left(x,t_{0}\right)\in S_{IC}-S_{\epsilon_{IC}}} \left( U_{T}\left( x,t_{0}\right)-U_{0}(x)\right) \frac{\partial U_{T}\left(x,t_{0}\right)}{\partial\omega_{1}^{(j,i)}} \right.\\
    & \ \ \ \ \ + \frac{2}{\left\vert S_{LBC}\right\vert} \sum_{\left( x_{L},t\right)\in S_{LBC}-S_{\epsilon_{LBC}}}  H_{L}\left( U_{T}\left( x_{L},t\right)\right) \frac{\partial H_{L}\left( U_{T}\left( x_{L},t\right)\right)}{\partial\omega_{1}^{(j,i)}} \\
    & \ \ \ \ \ + \frac{2}{\left\vert S_{RBC}\right\vert} \sum_{\left( x_{R},t\right)\in S_{RBC}-S_{\epsilon_{RBC}}}  H_{R}\left( U_{T}\left( x_{R},t\right)\right) \frac{\partial H_{R}\left( U_{T}\left( x_{R},t\right)\right)}{\partial\omega_{1}^{(j,i)}}\\
    & \ \ \ \ \ \left. + \frac{2}{\left\vert S_{PDE}\right\vert} \sum_{(x,t)\in S_{PDE}-S_{\epsilon_{PDE}}}  F\left( U_{T}(x,t)\right) \frac{\partial F\left( U_{T}(x,t)\right)}{\partial\omega_{1}^{(j,i)}} \right\vert + \epsilon .\\
\end{split}
\end{equation*}
Therefore, if $\Omega_{x}\times\Omega_{t}$ is relatively large, it is possible to take $\epsilon$ small enough, such that

\begin{equation*}
\begin{split}
    \left\vert\frac{\partial Loss_{Total}}{\partial\omega_{1}^{(j,i)}} \right\vert_{S} & < \left\vert \frac{2}{\left\vert S_{IC}-S_{\epsilon_{IC}}\right\vert} \sum_{\left(x,t_{0}\right)\in S_{IC}-S_{\epsilon_{IC}}} \left( U_{T}\left( x,t_{0}\right)-U_{0}(x)\right) \frac{\partial U_{T}\left(x,t_{0}\right)}{\partial\omega_{1}^{(j,i)}} \right.\\
    & \ \ \ \ \ + \frac{2}{\left\vert S_{LBC}-S_{\epsilon_{LBC}}\right\vert} \sum_{\left( x_{L},t\right)\in S_{LBC}-S_{\epsilon_{LBC}}}  \mathcal{H}_{L}\left( U_{T}\left( x_{L},t\right)\right) \frac{\partial \mathcal{H}_{L}\left( U_{T}\left( x_{L},t\right)\right)}{\partial\omega_{1}^{(j,i)}} \\
    & \ \ \ \ \ + \frac{2}{\left\vert S_{RBC}-S_{\epsilon_{RBC}}\right\vert} \sum_{\left( x_{R},t\right)\in S_{RBC}-S_{\epsilon_{RBC}}}  \mathcal{H}_{R}\left( U_{T}\left( x_{R},t\right)\right) \frac{\partial \mathcal{H}_{R}\left( U_{T}\left( x_{R},t\right)\right)}{\partial\omega_{1}^{(j,i)}}\\
    & \ \ \ \ \ \left. + \frac{2}{\left\vert S_{PDE}-S_{\epsilon_{PDE}}\right\vert} \sum_{(x,t)\in S_{PDE}-S_{\epsilon_{PDE}}}  \mathcal{F}\left( U_{T}(x,t)\right) \frac{\partial \mathcal{F}\left( U_{T}(x,t)\right)}{\partial\omega_{1}^{(j,i)}} \right\vert \\
    & = \left\vert\frac{\partial Loss_{IC}}{\partial\omega_{1}^{(j,i)}} +\frac{\partial Loss_{LBC}}{\partial\omega_{1}^{(j,i)}} + \frac{\partial Loss_{RBC}}{\partial\omega_{1}^{(j,i)}} + \frac{\partial Loss_{PDE}}{\partial\omega_{1}^{(j,i)}} \right\vert_{S-S_{\epsilon}}\\
    & = \left\vert\frac{\partial Loss_{Total}}{\partial\omega_{1}^{(j,i)}} \right\vert_{S-S_{\epsilon}},
\end{split}
\end{equation*}    
similarly,
\begin{equation*}
\begin{split}
    \left\vert\frac{\partial Loss_{Total}}{\partial b_{1}^{(j)}} \right\vert_{S} & < \left\vert\frac{\partial Loss_{Total}}{\partial b_{1}^{(j)}} \right\vert_{S-S_{\epsilon}},
\end{split}
\end{equation*}
for $i=1,2$ and $j=1,\ldots ,h_{1}$.

\end{proof}

Note that the proof of Theorem \ref{theopde} is based on bounding the gradient of the loss function. This is done by bounding the partial all mixed derivatives of the trial solution $U_{T}(x,t)$ that arise from deriving the initial and boundary conditions, and the PDE with respect to $\omega_{1}^{(j,i)}$ and $b_{1}^{(j)}$, for $i=1,2$ and $j=1,\ldots ,h_{1}$. In particular, Theorem \ref{Coric} allows us to find the necessary bounds for all those mixed derivatives. Since Corollary \ref{Cortheo1} allows us to extend the result presented in Theorem \ref{Coric}, adding the requirement that all terms in the PDE have as a factor at least one partial derivative of $U_{T}(x,t)$, adding this requirement to Theorem \ref{theopde} allows us to use Corollary \ref{Cortheo1} instead of Theorem \ref{Coric} to prove the following corollary.

\begin{corollary}
\label{corpde}
Given $\mathcal{F}\left( U\right)=0$ a PDE with initial and boundary conditions over $\overline{\Omega_{x}\times\Omega_{t}}$, such that it only contains terms with at least one factor equal to a partial derivative of $U$. Let $\epsilon \ll 1$, $S$ a sample of points within the domain of $\mathcal{F}$, and $U_{T}(x,t)$ given by Eq. $(\ref{NNs})$. There are $\delta_{\epsilon_{j}}>0$ such that
\begin{equation}
\label{Theo2}
\left\vert\frac{\partial Loss_{Total}}{\partial\omega_{k}^{(j,i)}} \right\vert_{S}<\left\vert\frac{\partial Loss_{Total}}{\partial\omega_{k}^{(j,i)}} \right\vert_{S-S_{\epsilon}} \ \ \ and \ \ \ \left\vert\frac{\partial Loss_{Total}}{\partial b_{k}^{(j)}} \right\vert_{S}<\left\vert\frac{\partial Loss_{Total}}{\partial b_{k}^{(j)}} \right\vert_{S-S_{\epsilon}}
\end{equation}
for $i=1,2$ $j=1,\ldots ,h_{k}$ and $k=2,\ldots ,F+1$, where $S_{\epsilon}=S_{\epsilon_{IC}}\cup S_{\epsilon_{LBC}}\cup S_{\epsilon_{RBC}}\cup S_{\epsilon_{PDE}}$ is a set of the form
\begin{equation}
\label{Dgzcor3}
S_{\epsilon}= \left\lbrace (x,t)\in S^{\prime}\subset \overline{\Omega_{x}\times\Omega_{t}} : \left\vert \omega_{1}^{(j,1)}x+\omega_{1}^{(j,2)}t+ b_{1}^{(j)}\right\vert >\delta^{\prime}_{\epsilon_{j}} \ \forall j\leq h_{1}\right\rbrace ,
\end{equation}
with $S^{\prime}\in\left\lbrace S_{IC},S_{LBC},S_{RBC},S_{PDE}\right\rbrace$.
\end{corollary}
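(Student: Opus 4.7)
The plan is to follow the proof of Theorem \ref{theopde} almost verbatim, substituting Corollary \ref{Cortheo1} for Theorem \ref{Coric} everywhere a bound on a mixed partial derivative of $U_T$ with respect to a weight or bias is needed. The extra hypothesis on $\mathcal{F}$ -- that every term carries a factor which is a partial derivative of $U$ -- is exactly what makes this substitution legitimate, since Corollary \ref{Cortheo1} only bounds $\tfrac{\partial^{m+1} U_T}{\partial \omega_k^{(j,i)} \partial x^p \partial t^{m-p}}$ and $\tfrac{\partial^{m+1} U_T}{\partial b_k^{(j)} \partial x^p \partial t^{m-p}}$ in the range $1 \leq m \leq n$, i.e., only when at least one spatial or temporal derivative is present.

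First I would decompose $\left|\tfrac{\partial Loss_{Total}}{\partial \omega_k^{(j,i)}}\right|_S$ and $\left|\tfrac{\partial Loss_{Total}}{\partial b_k^{(j)}}\right|_S$ into the four contributions from $Loss_{IC}$, $Loss_{LBC}$, $Loss_{RBC}$, $Loss_{PDE}$ as in (\ref{LossT}). For the PDE piece, the hypothesis guarantees that each term of $\tfrac{\partial \mathcal{F}(U_T)}{\partial \omega_k^{(j,i)}}$ produced by the chain rule still contains a factor $\tfrac{\partial^{m+1} U_T}{\partial \omega_k^{(j,i)} \partial x^p \partial t^{m-p}}$ with $m \geq 1$; defining $\epsilon_{PDE}^{\prime}$ analogously to (\ref{EP_PDE}) and invoking Corollary \ref{Cortheo1} then yields $\delta^{\prime}_{\epsilon_{PDE_j}} > 0$ so that this factor is smaller than $\epsilon_{PDE}^{\prime}$ outside the layer-one active-gradient set, which in turn produces the analogue of (\ref{PPDEtheo2}). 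The same argument handles Neumann-type boundary losses through $\mathcal{H}_L$ and $\mathcal{H}_R$. With these pointwise bounds in hand on $S_{\epsilon_{PDE}}$, $S_{\epsilon_{LBC}}$, $S_{\epsilon_{RBC}}$ (and $S_{\epsilon_{IC}}$), the tolerances are chosen exactly as in (\ref{E_IC})--(\ref{E_PDE}) so that each contribution from $S_{\epsilon}$ is at most $\epsilon/4$, and the telescoping estimate that closes the proof of Theorem \ref{theopde} carries over verbatim to give both inequalities in (\ref{Theo2}) for every $k = 2, \ldots, F+1$.

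The main obstacle will be the $m = 0$ case that surfaces whenever a pure algebraic residual such as the initial condition $U_T(x,t_0) - U_0(x)$ or a Dirichlet-type boundary residual is differentiated with respect to a deep-layer parameter, producing $\tfrac{\partial U_T}{\partial \omega_k^{(j,i)}}$ directly -- precisely the situation that Corollary \ref{Cortheo1} does not cover. I would address this by reading the hypothesis on $\mathcal{F}$ as extending to the IC and Dirichlet residuals as well (so that a derivative factor is always present and the bound applies), or equivalently by restricting attention to problems whose IC/BC residuals themselves involve derivatives of $U$. This interpretation is consistent with the set $S_{\epsilon}$ in (\ref{Dgzcor3}), which is still defined purely through the layer-one criterion $\left|\omega_1^{(j,1)}x + \omega_1^{(j,2)}t + b_1^{(j)}\right| > \delta^{\prime}_{\epsilon_j}$. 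Once the $m = 0$ contributions are ruled out by hypothesis, the entire chain of inequalities used in Theorem \ref{theopde} -- the four bounds (\ref{PICtheo2})--(\ref{PLBCtheo2}), the splitting of each loss sum over $S$ into a sum over $S' - S_{\epsilon_{S'}}$ plus a sum over $S_{\epsilon_{S'}}$, and the final absorption of $\epsilon$ by taking $\Omega_x \times \Omega_t$ large enough -- transfers mechanically with $\omega_k^{(j,i)}$ and $b_k^{(j)}$ in place of $\omega_1^{(j,i)}$ and $b_1^{(j)}$, yielding the corollary.
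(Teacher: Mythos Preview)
Your proposal is correct and takes essentially the same approach as the paper: the paper's own argument for this corollary is a single paragraph preceding the statement, which says precisely that one reruns the proof of Theorem~\ref{theopde} replacing the appeal to Theorem~\ref{Coric} by Corollary~\ref{Cortheo1}, and that the added hypothesis on $\mathcal{F}$ is what makes that substitution valid. Your identification of the $m=0$ obstacle for the IC and Dirichlet-type boundary residuals is a genuine subtlety that the paper does not address explicitly; your proposed resolution (extending the derivative-factor hypothesis to those residuals) goes slightly beyond what the paper states but is consistent with its intent.
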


\begin{remark}
For the development of the sampling method that allows improving the optimization process of $\omega_{1}^{(j,i)}$ and $b_{1}^{(j)}$, we refer to $S_{\epsilon}$, given by Eq. $(\ref{Dgztheo2})$ on Theorem \ref{theopde}, as the diminishing gradient zone of $U_{T}(x,t)$. This because for all $(x,t)\in S_{\epsilon}$ it holds that $\frac{\partial^{m+1} U_{T}(x,t)}{\partial\omega_{1}^{(j,i)}\partial x^{p}\partial t^{m-p}}$ and $\frac{\partial^{m+1} U_{T}(x,t)}{\partial\omega_{1}^{(j,i)}\partial x^{p}\partial t^{m-p}}$ are negligible for $i=1,2$, $j=1,\ldots ,h_{1}$ and $0\leq p\leq m\leq n$, meaning that $S_{\epsilon}$ is equivalent to a diminishing gradient zone of $U_{T}(x,t)$ that takes into account only the parameters $\omega_{1}^{(j,i)}$ and $b_{1}^{(j)}$. Similarly, when applicable, we refer to $S_{\epsilon}$, given by Eq. $(\ref{Dgzcor3})$ on Corollary \ref{corpde}, as the diminishing gradient zone of $U_{T}(x,t)$.
\end{remark}

When considering a trial solution $U_{T}(x,t)$ with $F$ hidden layers and $h_{k}$ neurons in the $k$-th hidden layer for a PDE, Theorem \ref{theopde} implies that the step size with which the parameters $\omega_{1}^{(j,i)}$ and $b_{1}^{(j)}$ are updated decreases when considering points in the sample contained in the diminishing gradient zone of $U_{T}(x,t)$. That is, using points in the domain of the equation which cancel the derivatives with respect to $\omega_{1}^{(j,i)}$ and $b_{1}^{(j)}$ is far from being ideal for its optimization. Since considering samples completely contained in the active gradient zone of $U_{T}(x,t)$ would help prevent the gradient of the loss function from being negligible, this would help mitigate the vanishing gradient problem, which affects the parameters of the first hidden layer to a greater extent. Even more, this result extends to all parameters $\omega_{k}^{(j,i)}$ and $b_{k}^{(j)}$ in $U_{T}(x,t)$ for all PDEs that meet the requirements of Corollary $\ref{corpde}$, for example, the advection equation shown in Section 5. With this in mind, we will proceed to create a method that allows us to select samples within regions that approximate the active gradient zone of $U_{T}(x,t)$.


\section{Stratified Sampling Algorithms}

Generating a proper set of points for the test function is crucial for network parameter optimization. One straightforward method is to adopt the random sampling scheme (i.e., the test points are selected in the PDE domain randomly and uniformly). However, the uniform sampling scheme is inefficient if the expected solutions to the given PDE have local nonsmooth structures, such as the two-scale solutions considered in this paper. For this case, nonlinear sampling schemes that select test points nonuniformly are necessary. For example, the SS scheme is a standard technique that reduces sampling errors by partitioning the domain into multiple subdomains, known as strata \cite{liberty2016stratified}. The number of subdomains is determined by the local regularity of the solution for this case.

As mentioned, the first stage of the optimization process in this paper consists of training the network to satisfy the initial condition with $Loss_{IC}$ as the loss function. For this stage, from Theorem \ref{theopde}, we can create subdomains from the given domain $[x_{L},x_{R}]\times\lbrace t_{0}\rbrace$ by considering the active gradient zone of $Loss_{IC}$ with respect to the parameters of the first network layer. Once these subdomains are determined, the training of the network parameters is conducted by selecting random points uniformly within it. The proposed algorithm for network training, considering only the initial condition, is explained in Algorithm 1. \newline

\noindent\textbf{Algorithm 1:} Sample selection algorithm used to train the initial condition.
\begin{enumerate}[I.]
    \item For each $j\in\lbrace 1,\ldots ,h_{1}\rbrace$, find an approximation to the active gradient zone of $N_{1}^{(j)}\left( x,t_{0}\right)$ within the domain of the equation, given by $\left[ x_{L}^{(j)},x_{R}^{(j)}\right]$. To do this, it is necessary to compute:
    \begin{itemize}[-]
        \item Center of the active gradient zone, denoted by $C^{(j)}$, defined as the value of $x$ for which $N_{1}^{(j)}\left( x,t_{0}\right)=\frac{1}{2}$ and $C^{(j)}=-\frac{t_{0}\omega_{1}^{(j,2)}+b_{1}^{(1)}}{\omega_{1}^{(j,1)}}$. 
        \item Active gradient zone radius, denoted by $\delta^{(j)}=\frac{\epsilon}{\omega_{1}^{(j,1)}}$, which satisfy the condition that $N_{1}^{(j)}\left( C^{(j)}+\delta^{(j)},t_{0}\right)<\epsilon$ or $N_{1}^{(j)}\left( C^{(j)}+\delta^{(j)},t_{0}\right)>1-\epsilon$.
        \item Intersection of the active gradient zones of $N_{1}^{(j)}\left( x,t_{0}\right)$ and the domain $\left[ x_{L},x_{R}\right]$, given by $\left[ x_{L}^{(j)},x_{R}^{(j)}\right] = \left[\max{\left\lbrace x_{L}, C^{(j)}-\delta^{(j)}\right\rbrace},\min{\left\lbrace x_{R}, C^{(j)}+\delta^{(j)}\right\rbrace}\right]$.
    \end{itemize}
    \item Take a random sample of points $S_{IC}$  in the region that results from the union of all $\left[ x_{L}^{(j)},x_{R}^{(j)}\right]$. To create this sample, it is necessary to perform the following steps: 
    \begin{itemize}[-]
        \item Sort all subdomains $\left[ x_{L}^{(j)},x_{R}^{(j)}\right]$ and check if there are intersections among them. If there are any intersections, take the union of the intervals instead of each one of them.
        \item Take a random sample $S^{\left( j^{\prime}\right)}$ within the intervals obtained in the previous step. The number of points in each $S^{\left( j^{\prime}\right)}$ is determined based on the length of the interval and the desired density of points in the sample over the entire domain denoted by $d_{p}$.
        \item Define $S_{IC}$ as the union of all $S^{\left( j^{\prime}\right)}$.
    \end{itemize}
\end{enumerate}

The pseudocode of Algorithm 1 is provided in the appendix. The proposed algorithm is based on the fact that the first network layer comprises $h_{1}$ sigmoid functions evaluated with $\omega_{1}^{(j,1)}x+\omega_{1}^{(j,2)}t+b_{1}^{(j)}$ with $j=1,\ldots ,h_{1}$, which makes it possible to approximate their active and diminishing gradient zones. Thus, the union of the active gradient zone of all these sigmoid functions serves as the region where the gradient of $Loss_{IC}$ for the weights of the first network layer is nonnegligible.

Figure $\ref{fig:ex1v}$ shows an example of applying the previous algorithm to approximate the stiff initial condition $U(x)$ given by
\begin{equation}
    \label{function_example1}
    U(x)=\left\lbrace\begin{split}
        1 \ \ \ \ \ \ & \ \ \ \mathrm{if} \ x\leq -3\pi\\
        \sin\left(\frac{x}{2}\right) & \ \ \ \mathrm{if} \ -3\pi \leq x\leq 3\pi\\
        -1 \ \ \ \ \ & \ \ \ \mathrm{if} \ \pi\leq x.
    \end{split}\right.
\end{equation}
In the figure, the dotted red line represents the exact given initial condition $u(x,t_0)$, the solid blue line represents the approximated trial function $U_T\left(x,t_0\right)$, and the blue dots represents the sample points selected through Algorithm 1. 
The approximation, $U_{T}(x)$, is obtained by training over the sample sets of points randomly selected within the active gradient zones given by Eq. $(\ref{NNs})$. As indicated in the figure, the selected sample points are concentrated near the highly localized areas of the given $U(x,t_0)$. No sample points were selected within the plateau of $U(x,t_0)$.

\begin{figure}[h]
\centering
\begin{subfigure}{1\textwidth}
  \centering
  \includegraphics[scale=.65]{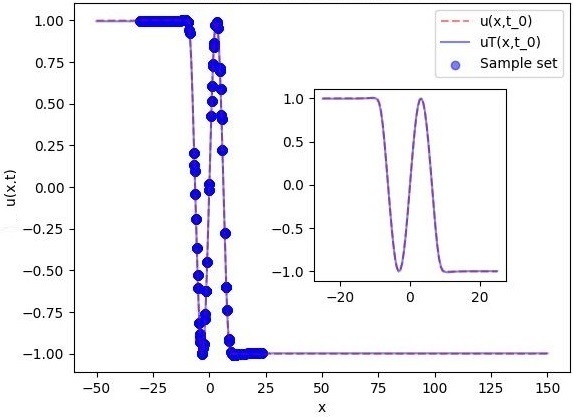}
\end{subfigure}
\caption{Exact initial condition $U(x, t_0)$ (dotted red line) given by Eq. $(\ref{function_example1})$ over the domain $\Omega_{x}=[-50,150]$ and its approximation $U_{T}(x)$ (solid blue line) determined by Eq. $(\ref{NNs})$ with three hidden layers and $60$ neurons per layer. The blue dots represent the sample points in the active gradient zones over $100,000$ epochs.}
\label{fig:ex1v}
\end{figure}

Figure \ref{fig:ex1} depicts the convergent behavior of the loss function with the training process, and Fig. \ref{fig:ex2} presents the size of the sample sets (total points in the sample sets) for each epoch. We used the constant density of points for the area where sample points were selected. For the uniform classical sampling (CS) method, sample points were selected in the entire domain with the density $d_{p}$, and for the SS method, the samples points were selected in the active gradient zones with the same density $d_{p}$ (see Remark 1). The figure indicates that the loss function decreases with increasing epochs, and the total points also decrease, although slowly. The selected sample points over 100,000 epochs are listed in Fig. \ref{fig:ex1v}. The chosen sample points define the highly localized regions of the initial condition well. 

\begin{figure}[h]
\centering
\begin{subfigure}{.5\textwidth}
  \centering
  \includegraphics[scale=.56]{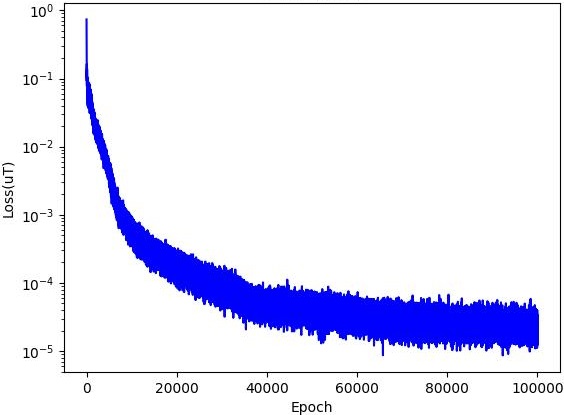}
  \caption{Loss function value per epoch.}
  \label{fig:ex1}
\end{subfigure}%
\begin{subfigure}{.5\textwidth}
  \centering
  \includegraphics[scale=.56]{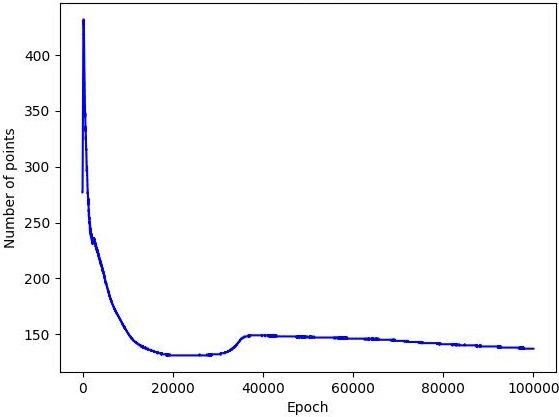}
  \caption{Sample set size per epoch.}
  \label{fig:ex2}
\end{subfigure}
\caption{(a) Convergence behavior of the loss function over the epochs. (b) Size of the sample sets for $U(x, t_0)$ given by Eq. $(\ref{function_example1})$ over the domain $\Omega_{x}=[-50,150]$ using the proposed sampling algorithm over $100,000$ epochs.}
\label{fig:ex2v}
\end{figure}

\begin{remark}
Algorithm 1 maintains a specific point density per unit volume in the selected domain to ensure a fair comparison with the usual learning methods that apply the uniform sampling scheme. Therefore, the density can be adjusted adaptively depending on the regularity of the function to enhance efficiency.

\end{remark}

Now, for the general optimization, for which $Loss_{Total}$ (Eq. (\ref{Loss})) is taken as the loss function, it is also possible to separate the domain of the equation into strata. This will depend on the gradient of the $Loss_{PDE}$  with respect to the weights of the first layer of the network. For this, let us note that $\sigma\left(\omega_{1}^{(j,1)}x+\omega_{1}^{(j,2)}t+b_{1}^{(j)}\right) = \sigma\left(\omega_{1}^{(j,1)}\left( x+{{\omega_{1}^{(j,2)}}\over{\omega_{1}^{(j,1)}}}t\right)+b_{1}^{(j)}\right)$ is a traveling wave with constant speed of $-{{\omega_{1}^{(j,2)}}\over{\omega_{1}^{(j,1)}}}$ for every $j=1,\ldots, h_{1}$ with which it is easy to find the union of their active gradient zones. However, generating a sample of points in this region is not straightforward in terms of computation. To address this issue, we note that it is possible to split the domain of the equation into subdomains of the form $(x_{L},x_{R})\times(t_{i},t_{i+1}]$ for $i=0,\ldots, N-1$, that is, the time domain is also split. With this split, it is possible to delimit the region that completely contains the intersections of $(x_{L},x_{R})\times(t_{i},t_{i+1}]$ with the union of the active gradient zones of all neurons in the first hidden layer. Therefore, the union of these regions within each subdomain $(x_{L},x_{R})\times(t_{i},t_{i+1}]$ yields the approximate zone of the active gradient zone for the loss function of $Loss_{PDE}$ with respect to the weights of the first layer of the network. In this way, the sample selected by our method must be contained in this approximate active gradient zone. The following is the proposed algorithm to generate the random samples over the interior domain, Algorithm 2:\newline

\noindent\textbf{Algorithm 2:} Sample selection algorithm used to train the PDE.
\begin{enumerate}[I.]
    \item For each $i=0,\ldots ,N-1$, define $t_{i}=t_{0}+i\Delta_{t}$, where $\Delta_{t} = \dfrac{(t_{F}-t_{0})}{N}$ with the final time $t_F$.
    \item For each $i=0,\ldots ,N-1$ and $j\in\lbrace 1,\ldots ,h_{1}\rbrace$, find an approximation to the active gradient zone of $N_{1}^{(j)}\left( x,t\right)$ within the subdomain of the equation with $t\in\left( t_{i},t_{i+1}\right]$, given by $\left[ x_{L,i}^{(j)},x_{R,i}^{(j)}\right]\times\left( t_{i},t_{i+1}\right]$. To do this, it is necessary to compute the followings:
    \begin{itemize}[-]
        \item Centers of the active gradient zones of $N_{1}^{(j)}\left( x,t\right)$ at $t_{i}$ and $t_{i+1}$, denoted by $C_{i}^{(j)}$ and $C_{i+1}^{(j)}$, respectively, given by $C_{i}^{(j)}=-\frac{t_{i}\omega_{1}^{(j,2)}+b_{1}^{(1)}}{\omega_{1}^{(j,1)}}$ and $C_{i+1}^{(j)}=-\frac{t_{i+1}\omega_{1}^{(j,2)}+b_{1}^{(1)}}{\omega_{1}^{(j,1)}}$. 
        \item Active gradient zone radius, denoted by $\delta^{(j)}=\frac{\epsilon}{\omega_{1}^{(j,1)}}$, which satisfy the conditions that $N_{1}^{(j)}\left( C^{(j)}+\delta^{(j)},t_{0}\right)<\epsilon$ or $N_{1}^{(j)}\left( C^{(j)}+\delta^{(j)},t_{0}\right)>1-\epsilon$.
        \item Intersection of the active gradient zone of $N_{1}^{(j)}\left( x,t\right)$ and the domain $\left[ x_{L},x_{R}\right]\times\left( t_{i},t_{i+1}\right]$ given by $\left[ x_{L,i}^{(j)},x_{R,i}^{(j)}\right] = \left[\max{\left\lbrace x_{L}, \min{ \left\lbrace C_{i}^{(j)}-\delta^{(j)}, C_{i+1}^{(j)}-\delta^{(j)} \right\rbrace}\right\rbrace},\min{\left\lbrace x_{R}, \max{ \left\lbrace C_{i}^{(j)}+\delta^{(j)}, C_{i+1}^{(j)}+\delta^{(j)} \right\rbrace}\right\rbrace}\right]$.
    \end{itemize}
    \item Take a random sample of points $S_{PDE}$  in the region that results from the union of all $\left[ x_{L,i}^{(j)},x_{R,i}^{(j)}\right]\times\left( t_{i},t_{i+1}\right]$. To create this sample, it is necessary to perform the following steps: 
    \begin{itemize}[-]
        \item For each $i=0,\ldots ,N-1$ sort all $\left[ x_{L,i}^{(j)},x_{R,i}^{(j)}\right]$ and check if there are intersections among them. If there are any intersections, consider the union of the intervals instead of each of them individually.
        \item Take a random sample $S_{i}^{\left( j^{\prime}\right)}$ within the subdomains $\left[ x_{L,i}^{\left( j^{\prime}\right)},x_{R,i}^{\left( j^{\prime}\right)}\right]\times\left( t_{i},t_{i+1}\right]$. The number of points in each $S^{\left( j^{\prime}\right)}$ is determined based on the length of the interval and the desired density of points in the sample over the entire domain of the equation, denoted by $d_{p}$.
        \item Define $S_{PDE}$ as the union of all $S_{i}^{\left( j^{\prime}\right)}$.
    \end{itemize}
\end{enumerate}

\ref{AppendixA} provides the pseudocode. Thus, for optimization in the general case, Algorithms 1 and 2 for sampling must be used to compute $Loss_{IC}$ and $Loss_{PDE}$, respectively, in addition to the necessary samples for $Loss_{BC}$. Figure \ref{fig5} presents a graphical representation of the sampling regions at a particular epoch found by the algorithms using the active and diminishing gradient zones of the loss function. In the figure, the time domain of $[t_0, t_F]$ is split into $[t_0, t_1], \cdots, [t_{N-1}, t_N]$, and the selected sample domains are represented in blue boxes in each time interval. In each sample domain, the active gradient zone is represented by a solid red line, with the center represented by a dotted red line. In contrast to the active gradient zone, its complement, the diminishing gradient zone, is represented by gray areas. The propagation of the active gradient zone is based on the traveling wave speed defined in the first layer. At certain time intervals, multiple active gradient zones overlap.

\begin{figure}[h]
	\centering
	\includegraphics[scale=.65]{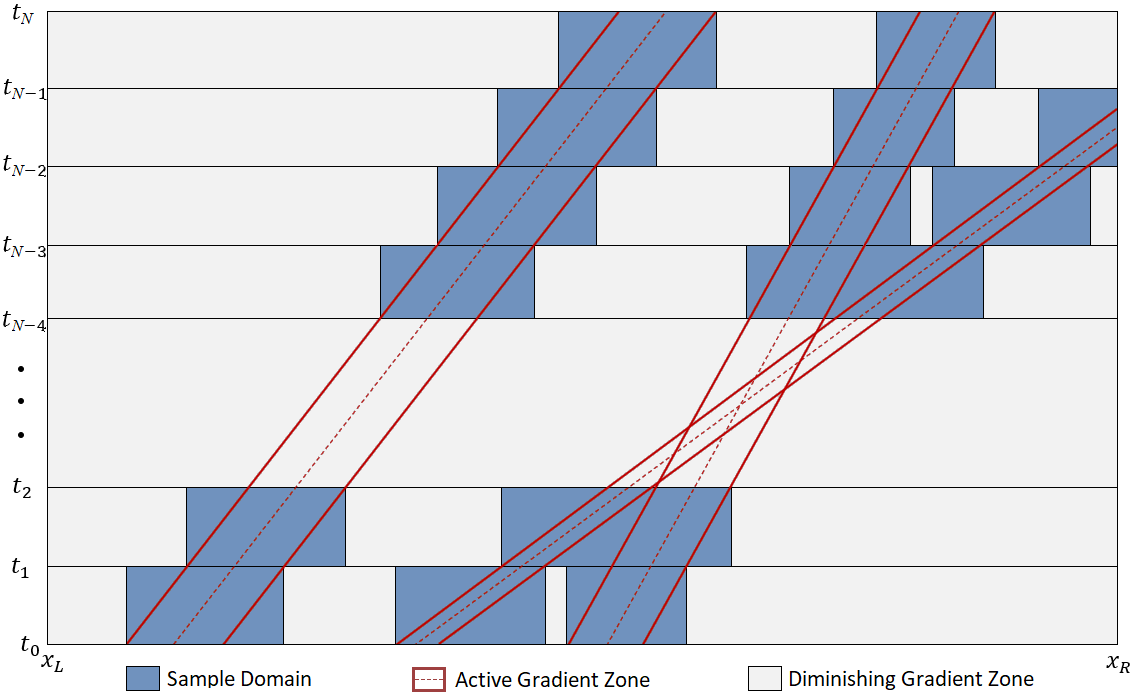}
	\caption{Active and diminishing gradient zone schematic. The time domain is split into $N$ subintervals. The sample domain with the active gradient zone (solid red line centered between dotted red lines) is marked by blue boxes. Gray areas mark diminishing gradient zones. Sample points in blue subdomains train $U_{T}(x,t)$ given by Eq. (\ref{NNs}). Color available online. }
	\label{fig5}
\end{figure}


\section{Numerical Results}

We focused on the PDEs with traveling wave solutions with highly localized two-scale solution profiles to validate the proposed algorithms. We compared the numerical results produced by the proposed and traditional methods. For the numerical experiments, we scaled the test PDEs to train the network for the solutions defined on a large domain and to determine how the stiff profiles of the solutions are preserved for a large domain. Consequently, we expected that the absolute values of the network weights for the scaled PDEs would become smaller than those of the network for the original PDEs.


\subsection{Advection Equation}

We considered the advection equation \cite{vadyala2022physics}, which has the traveling wave solution with a constant speed of $c$ and a highly localized initial condition, given by the following:
\begin{equation}
\label{Acvec}
u_{t}+cu_{x}=0.
\end{equation}
If the initial condition is given as $u(x,t_{0})=u_{0}(x)$, the exact solution of this equation is given by
\begin{equation}
\label{soladv}
u(x,t)=u_{0}(x-ct). 
\end{equation}
To demonstrate the performance of the proposed method, we considered the case where the initial condition exhibits a sharp gradient in a small, localized region but is otherwise highly smooth, maintaining a long-range plateau. Additionally, we selected a large domain where the solution is defined. Given that the region of the stiff solution is highly localized compared to the entire domain, approximating such a solution using the neural network approach with a uniform sampling scheme becomes computationally inefficient and expensive.

Table \ref{tabAD} lists the minimum loss function value for the advection equation with $c = 1$ for various domain sizes and epoch numbers. The initial condition was given as $u_0(x) = e^{-x^2}$. The physical boundary condition is defined as $u(x_L,t) = 0$. To ensure that the solution remains flat near the right boundary until the final time, we also enforced $u(x_R,t) = 0$. In the table, we compared the proposed SS algorithm with the uniform CS algorithm. The CS loss function value in the table represents the smaller result for the loss function obtained using the CS scheme through the training process, whereas the SS loss function value represents the smaller result of the proposed sampling scheme. Three hidden layers are used for the network, each with 20 neurons.
\begin{center}
\begin{tabular}{c c c c}
\hline
Domain & Epoch & CS Loss function value & SS Loss function value\\
\hline
$\left[ -20,80\right]\times \left[ 0,60\right]$ & 20,000 & $1.89\times 10^{-7}$ & $2.97\times 10^{-7}$ \\
$\left[ -200,800\right]\times \left[ 0,600\right]$ & 50,000 & $5.24\times 10^{-7}$ & $8.79\times 10^{-8}$ \\
$\left[ -2000,8000\right]\times \left[ 0,6000\right]$ & 100,000 & $3.19\times 10^{-6}$ & $9.86\times 10^{-7}$ \\
\hline
\end{tabular}
\captionof{table}{Minimum loss function value for the advection equation (Eq. $(\ref{Acvec})$) with $c=1$ for various domain sizes $\left[ x_{L},x_{R}\right]\times \left[ t_{0},t_{F}\right]$ for the initial condition $u_{0}(x)=e^{-x^{2}}$ and the boundary condition $u\left( x_{L},t\right) = 0$. We also enforce $u\left( x_{R},t\right)=0$. The network for all trial solutions consists of three hidden layers with 20 neurons for each.}
\label{tabAD}
\end{center}

Table \ref{tabAD} presents a decrease in the loss function value using SS techniques compared to using CS methods. Although a lower loss function value generally does not imply a better approximation of the solution to a PDE, as indicated in Fig. \ref{fig:AD}, using SS techniques improves optimization.

Using SS does not always represent a decrease in the loss function value, as listed in Tables \ref{tabFish} and \ref{tabZel} for the Fisher and Zeldovich equations, respectively. This result is partly due to the domain sizes considered for this equation, but more importantly, because the advection equation satisfies the requirements of Corollary \ref{corpde}. This outcome implies that using SS represents an improvement in optimizing all parameters in $U_{T}(x,t)$, not just those in the first hidden layer.

Figure \ref{fig:AD} illustrates the solutions in the $x$-$t$ domain, the solution using the CS method (left), SS method (middle), and exact solution (right), given by $u(x,t)=e^{-(x-t)^{2}}$. The figure reveals that the SS method offers a better approximation, although both the CS and SS methods have similar loss function values. The amplitude of the solution obtained using the CS method is visibly smaller than the amplitude of the exact solution, whereas the amplitude of the solution with the SS method is closer to the exact amplitude. Based on the experiments, the advantage of the proposed method becomes more evident as the domain size and number of neurons per layer increase.

\begin{figure}[h]
\centering
\begin{subfigure}{.33\textwidth}
  \centering
  \includegraphics[scale=.55]{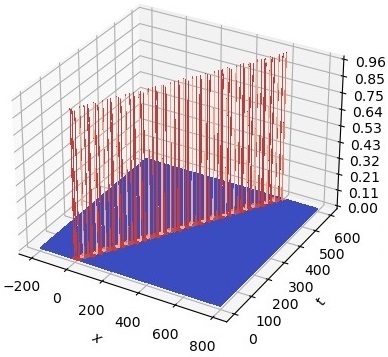}
  \caption{Trial solution using CS.}
  \label{fig:AD1}
\end{subfigure}%
\begin{subfigure}{.33\textwidth}
  \centering
  \includegraphics[scale=.55]{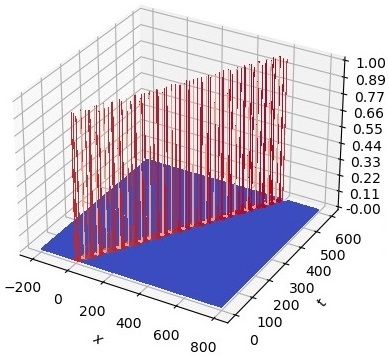}
  \caption{Trial solution using SS.}
  \label{fig:AD2}
\end{subfigure}%
\begin{subfigure}{.33\textwidth}
  \centering
  \includegraphics[scale=.55]{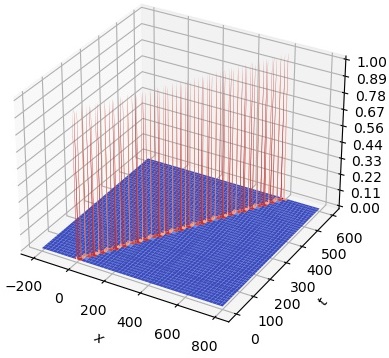}
  \caption{Exact solution.}
  \label{fig:AD3}
\end{subfigure}
\caption{Trial solutions obtained after 50,000 epochs using the (a) classical sampling (CS) and (b) stratified sampling (SS) algorithms in order to approximate (c) the exact solution to the advection equation (Eq. $(\ref{Acvec})$) over the domain $\Omega_{x}\times\Omega_{t}=[-200,800]\times [0,600]$.}
\label{fig:AD}
\end{figure}

Figure \ref{fig:AD_IC} illustrates the initial condition and its approximation with both the CS (Fig. \ref{fig:A1IC}) and SS (Fig. \ref{fig:A2IC}) methods. The dotted red line represents the initial condition, and the solid blue line approximates it. The figure reveals that the proposed method yields a better approximation of the initial condition, which is clear because the maximum value of $U_{T}\left(x,t_{0}\right)$, given by the CS method, is approximately 0.96 instead of 1, as it should be. Moreover, in Fig. \ref{fig:AD_IC}, the value of $U_{T}\left(x_{R},t_{0}\right)$ is closer to the actual value of $U\left(x_{R},t_{0}\right)$ when using SS, even when $\left(x_{R},t_{0}\right)$ was never considered in the sample.

\begin{figure}[h]
\centering
\begin{subfigure}{.5\textwidth}
  \centering
  \includegraphics[scale=.56]{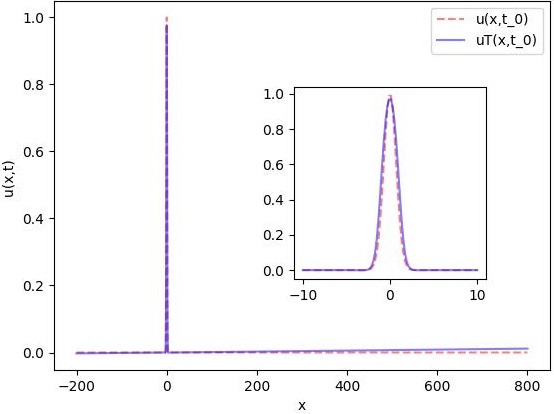}
  \caption{Initial condition using CS.}
  \label{fig:A1IC}
\end{subfigure}%
\begin{subfigure}{.5\textwidth}
  \centering
  \includegraphics[scale=.56]{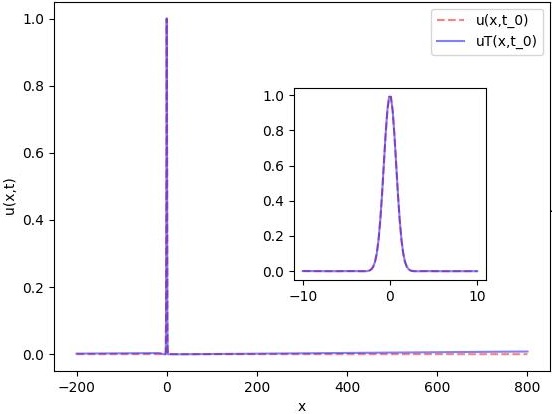}
  \caption{Initial condition using SS.}
  \label{fig:A2IC}
\end{subfigure}
\caption{Approximations of the initial condition  over the domain $\Omega_{x}\times\Omega_{t}=[-200,800]\times [0,600]$ with (a) the classical sampling method and (b) the proposed stratified sampling method after 50,000 epochs.}
\label{fig:AD_IC}
\end{figure}

Figure \ref{fig:AD_Loss} illustrates how the loss function changes as the training progresses with the CS (Fig. \ref{fig:A1}) and SS 
(Fig. \ref{fig:A2}) methods. The loss function seems saturated when the epoch number is high with the CS method, whereas the loss function seems to decay further, although it highly fluctuates with the SS method. Figure \ref{fig:A3} depicts the corresponding MSE with respect to $u(x,t)=e^{-(x-t)^{2}}$ for the CS method (dotted red line) and the SS method (solid blue line). In the figure, the proposed SS method yields better convergence behavior than the CS method. Figure \ref{fig:A4} displays how many sampling points are selected using SS as the training progresses, maintaining the same point density as the CS method, which used 15,000 points.\\

\begin{figure}[h]
\centering
\begin{subfigure}{.5\textwidth}
  \centering
  \includegraphics[scale=.55]{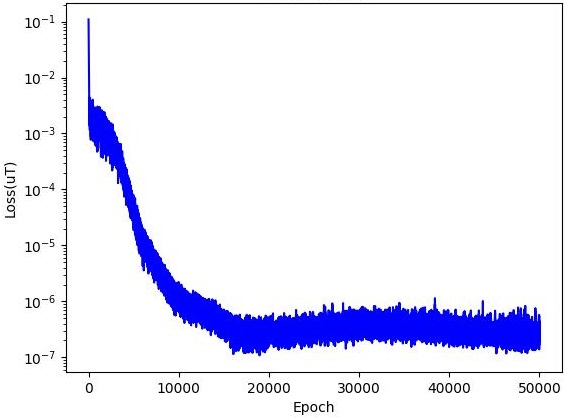}
  \caption{Loss function using CS.}
  \label{fig:A1}
\end{subfigure}%
\begin{subfigure}{.5\textwidth}
  \centering
  \includegraphics[scale=.55]{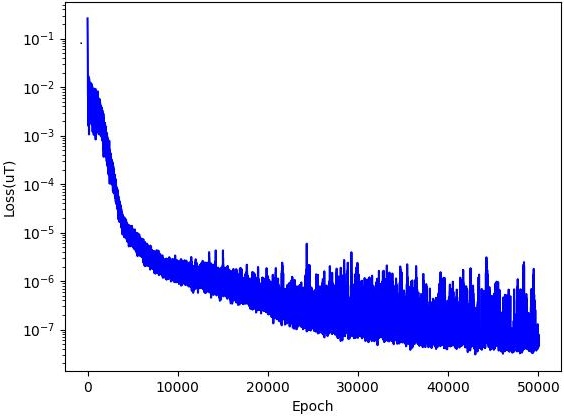}
  \caption{Loss function using SS.}
  \label{fig:A2}
\end{subfigure}
\begin{subfigure}{.5\textwidth}
  \centering
  \includegraphics[scale=.55]{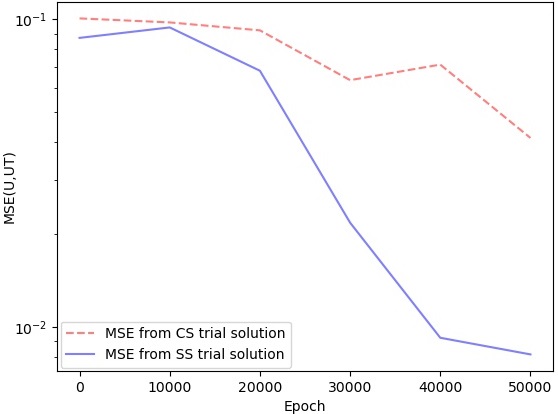}
  \caption{MSE respect to the exact solution.}
  \label{fig:A3}
\end{subfigure}%
\begin{subfigure}{.5\textwidth}
  \centering
  \includegraphics[scale=.55]{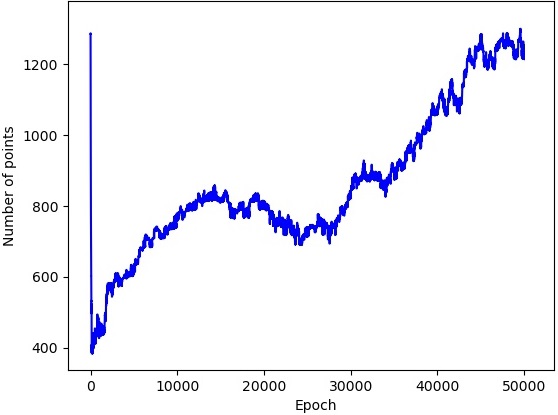}
  \caption{Sample set size using SS.}
  \label{fig:A4}
\end{subfigure}
\caption{Loss function versus epochs with the (a) classical sampling (CS) method and (b) stratified sampling (SS) method. (c) Mean squared error (MSE) for the exact solution ($u(x,t)=e^{-(x-t)^{2}}$) versus the epochs for the CS (dotted red line) and SS (solid blue line) methods. (d) Number of points selected versus epochs using the SS method. Both methods were used to approximate the solution of the advection equation (Eq. $(\ref{Acvec})$) over the domain $\Omega_{x}\times\Omega_{t}=[-200,800]\times [0,600]$ after 50,000 epochs.}

\label{fig:AD_Loss}
\end{figure}

As mentioned before, the advection equation meets the requirements of Corollary \ref{corpde}, which guarantees the improvement of all the weights and biases in the network during the optimization process, not only those in the first hidden layer. This result leads to a smaller number of epochs needed to obtain an acceptable solution for the advection equation compared to the number of epochs needed for the Fisher and Zeldovich equations (Eqs. $(\ref{Fisher})$ and $(\ref{Zeldovich})$). Additionally, Fig. \ref{fig:A2} illustrates smaller fluctuations in the changes of the loss function values in comparison to the Fisher and Zeldovich equations (Figs. \ref{fig:F2} and \ref{fig:Z6}).


\subsection{Fisher Equation}

Proposed in the context of population dynamics as a model for the spatial and temporal spread of an advantageous allele in an infinite one-dimensional medium \cite{fisher1937wave}, Fisher's equation is defined by
\begin{equation}
\label{FisherD}
v_{\tau}=Dv_{zz}+\rho v\left( 1-\frac{v}{K}\right)
\end{equation}
where $\tau$ is the time variable and $z\in\left(-\infty ,\infty\right)$ is the spatial variable, $D$ is the diffusion coefficient, $\rho$ represents a reactive rate coefficient, and $K$ is the carrying capacity. We consider solutions to Eq. $(\ref{FisherD})$ subject to the following initial and boundary conditions:
\begin{equation}
\label{FisherICBC}
\left\lbrace \begin{split}
& v_{\tau}=Dv_{zz}+\rho v\left( 1-\frac{v}{K}\right),\\
&\lim_{z\rightarrow -\infty} v(z,\tau) = 1,\\
&\lim_{z\rightarrow \infty} v(z,\tau) = 0, \\
& v(z,0)= v_{0}(z).
\end{split}\right.
\end{equation}
By rescaling the system with the following change of variables
\begin{equation*}
\begin{split}
x&=\sqrt{\frac{\rho}{D}}z, \ \ \ t=\rho \tau, \ \ \  u=Kv,
\end{split}
\end{equation*}
we rewrite Eq. $(\ref{FisherICBC})$ as
\begin{equation}
\label{Fisher}
\left\lbrace \begin{split}
& u_{t}=u_{xx}+u(1-u),\\
&\lim_{x\rightarrow -\infty} u(x,t) = 1,\\
&\lim_{x\rightarrow \infty} u(x,t) = 0, \\
& u(x,0)= u_{0}(x).
\end{split}\right.
\end{equation}
The new equation admits traveling wave solutions of the form $u(x,t) = w(x\pm ct) = w(x_{c})$,
where $w$ is decreasing and satisfies
\begin{equation*}
\begin{split}
\lim_{x_{c}\rightarrow -\infty} w(x_{c}) &= 1,\\
\lim_{x_{c}\rightarrow \infty} w(x_{c}) &= 0, \\
\end{split}
\end{equation*}
for dimensionless speeds $c\geq 2$.
The exact solution of Eq. $(\ref{Fisher})$
has been known 
\cite{olmos2016implicit, ablowitz1979explicit} and is given by
\begin{equation}
\label{solFish}
u(x,t)=\frac{1}{\left( 1+e^{\sqrt{\frac{1}{6}}x-\frac{5}{6}t}\right)^{2}}.
\end{equation}
Given the information of the domain and those values of $\rho$, $D$ and $K$ we approximate the solution of the original equation Eq. $(\ref{FisherICBC})$ by training a trial solution $u_T$ of Eq. $(\ref{Fisher})$ with the initial condition $u_{0}(x)=\left( 1+e^{\sqrt{\frac{1}{6}}x}\right)^{-2}$ using the above exact solution. 

Table \ref{tabFish} presents the results considering the same point density per area unit within the domain in each sample selected during training for the CS and SS algorithms. The table provides the results with various domain sizes considered.
\begin{center}
\begin{tabular}{c c c c}
\hline
Domain size & Epoch number &  Classical sampling & Stratified sampling\\
\hline
$\left[ -20,80\right]\times \left[ 0,30\right]$ & 20,000 & $7.83\times 10^{-7}$ & $3.17\times 10^{-6}$ \\
$\left[ -200,800\right]\times \left[ 0,300\right]$ & 40,000 & $6.09\times 10^{-7}$ & $1.13\times 10^{-6}$ \\
$\left[ -2000,8000\right]\times \left[ 0,3000\right]$ & 400,000 & $2.58\times 10^{-6}$ & $2.71\times 10^{-6}$ \\
\hline
\end{tabular}
\captionof{table}{Minimum loss function value for the Fisher equation (Eq. $(\ref{Fisher})$) with different domain sizes $\left[ x_{L},x_{R}\right]\times \left[ t_{0},t_{F}\right]$. The network for all trial solutions consists of three hidden layers with 40 neurons for each.}
\label{tabFish}
\end{center}

In comparison with the results in Table \ref{tabAD}, Table \ref{tabFish} reveals that CS sometimes has smaller loss function values than those of the SS method. However, as mentioned, lower loss function values do not imply better approximations of the solution to the PDE. As indicated in Fig. \ref{fig:Fish_loss}, depending on the selected sample, a sufficient approximation of the solution may present a higher or similar loss function value than a poor approximation. For example, both methods produce similar loss function values when considering the equation in large domains. However, when plotting both approximations and comparing them with the exact solution given by Eq. $(\ref{solFish})$, the SS method offers a better approximation of the analytical solution than the CS method. Moreover, on average, 400,000 epochs were required to obtain an acceptable approximation of the exact solution of Eq. $(\ref{Fisher})$ in the domain $\left[ -2000,8000\right]\times \left[ 0,3000\right]$ using the SS method. However, this is not the case when using CS techniques, as presented in Fig. \ref{fig:Fish_ts1}, which displays the trial solution using CS, where the maximum value reached by $U_{T}\left(x,t\right)$ is approximately 1.01 instead of 1, as it should be. On average, more than 700,000 epochs were required to obtain an acceptable approximation of Eq. $(\ref{Fisher})$ using CS over the same domain.\\

\begin{figure}[h]
\centering
\begin{subfigure}{.33\textwidth}
  \centering
  \includegraphics[scale=.55]{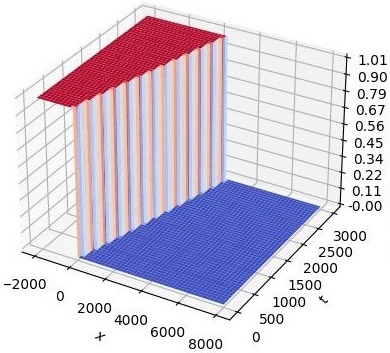}
  \caption{Trial solution using CS.}
  \label{fig:Fish_ts1}
\end{subfigure}%
\begin{subfigure}{.33\textwidth}
  \centering
  \includegraphics[scale=.55]{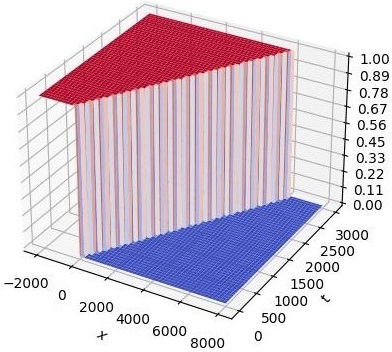}
  \caption{Trial solution using SS.}
  \label{fig:Fish_ts3}
\end{subfigure}%
\begin{subfigure}{.33\textwidth}
  \centering
  \includegraphics[scale=.55]{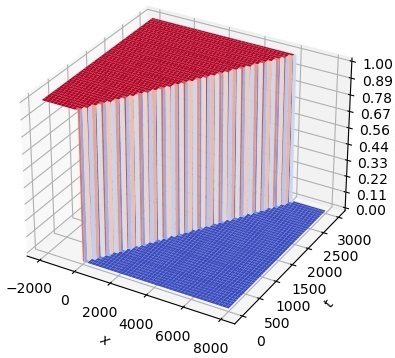}
  \caption{Exact solution.}
  \label{fig:Fish_ts2}
\end{subfigure}
\caption{Trial solutions with the (a) classical sampling (CS) and (b) stratified sampling (SS) algorithms after 400,000 epochs approximating (c) the exact solution to Fisher’s equation (Eq. $(\ref{Fisher})$) over the domain $\Omega_{x}\times\Omega_{t}=[-2000,8000]\times [0,3000]$ given by Eq. $(\ref{solFish})$.}
\label{fig:Fish_ts}
\end{figure}

Figure \ref{fig:Fish_IC} presents the benefits of performing optimization in two stages, where the first stage only approximates the initial condition. The initial condition and its approximation with both methods offered almost identical results, even when both methods resulted in two different traveling waves, as indicated in Fig. \ref{fig:Fish_ts}.\\
 
\begin{figure}[h]
\centering
\begin{subfigure}{.5\textwidth}
  \centering
  \includegraphics[scale=.56]{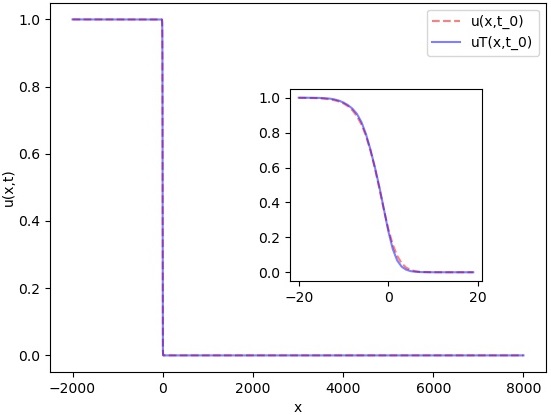}
  \caption{Initial condition using CS.}
  \label{fig:AD5}
\end{subfigure}%
\begin{subfigure}{.5\textwidth}
  \centering
  \includegraphics[scale=.56]{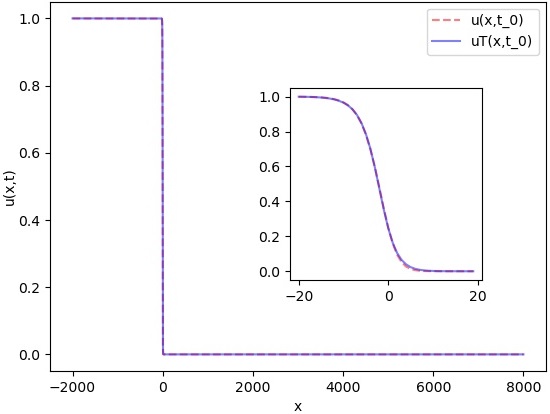}
  \caption{Initial condition using SS.}
  \label{fig:AD6}
\end{subfigure}
\caption{Graphs of the exact solution of the Fisher equation (Eq. $(\ref{Fisher})$) over the domain $\Omega_{x}\times\Omega_{t}=[-2000,8000]\times [0,3000]$ and its approximations using the (a) classical sampling (CS) method and (b) stratified sampling (SS) method after 400,000 epochs.}

\label{fig:Fish_IC}
\end{figure}

Figure \ref{fig:Fish_loss} depicts the dynamic that the loss function value presents during training using the CS (Fig. \ref{fig:F1}) and SS (Fig. \ref{fig:F2}) methods. The loss function seems saturated at almost any epoch of the optimization process when using CS techniques. Moreover, the loss function fluctuates more when using SS techniques, except for the period between 150,000 and 250,000 epochs during training. This outcome is partly due to the step size in which network parameters are updated using SS, which is considerably larger than the step size using the CS method. Figure \ref{fig:F3} displays the corresponding MSE with respect to Eq. $(\ref{solFish})$ for the CS method (dotted red line) and the SS method (solid blue line). In the figure, the proposed SS method yields better convergence behavior than the CS method. Figure \ref{fig:F4} illustrates how many sampling points are selected using SS as the training progresses, maintaining the same point density as the CS method, which used 50,000 points.

\begin{figure}[h]
\centering
\begin{subfigure}{.5\textwidth}
  \centering
  \includegraphics[scale=.55]{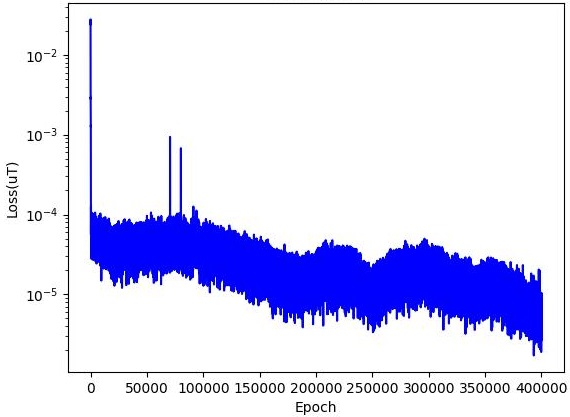}
  \caption{Loss function using CS.}
  \label{fig:F1}
\end{subfigure}%
\begin{subfigure}{.5\textwidth}
  \centering
  \includegraphics[scale=.55]{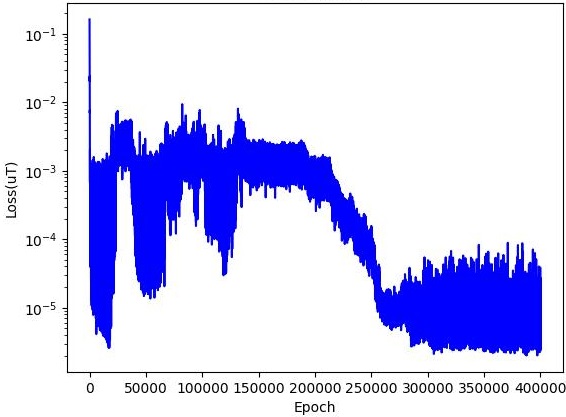}
  \caption{Loss function using SS.}
  \label{fig:F2}
\end{subfigure}
\begin{subfigure}{.5\textwidth}
  \centering
  \includegraphics[scale=.55]{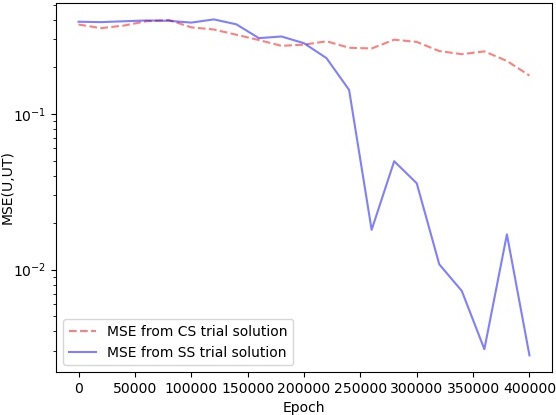}
  \caption{MSE respect to the exact solution.}
  \label{fig:F3}
\end{subfigure}%
\begin{subfigure}{.5\textwidth}
  \centering
  \includegraphics[scale=.55]{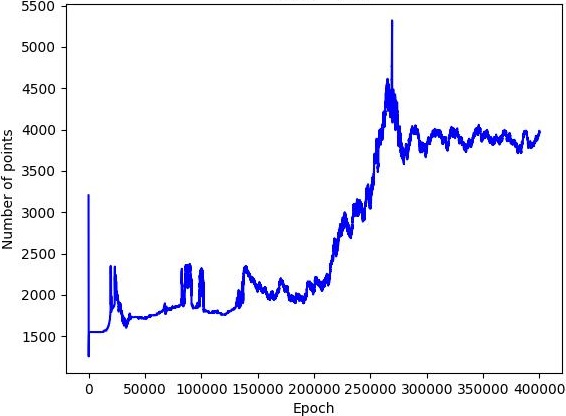}
  \caption{Sample set size using SS.}
  \label{fig:F4}
\end{subfigure}
\caption{Loss function versus epochs with the (a) classical sampling (CS) method and (b) stratified sampling (SS) method. (c) Mean squared error (MSE) for the exact solution (Eq. $(\ref{solFish})$) versus the epochs for the CS (dotted red line) and SS (solid blue line) methods. (d) Number of points selected versus epochs using the SS method. Both methods were used to approximate the solution of the Fisher equation (Eq. $(\ref{Fisher})$) over the domain $\Omega_{x}\times\Omega_{t}=[-2000,8000]\times [0,3000]$ after 400,000 epochs.}
\label{fig:Fish_loss}
\end{figure}

Unlike the advection equation, the Fisher equation does not meet the requirements of Corollary \ref{corpde}. Therefore, Theorem \ref{theopde} only guarantees an improvement in optimizing the parameters of the first hidden layer of $U_{T}(x,t)$. However, as indicated in Figs. \ref{fig:Fish_ts} and \ref{fig:Fish_loss}, this improvement is enough to optimize the trial solution in the sense of requiring a smaller number of epochs to obtain a sufficient approximation of the exact solution. Not being able to extend the result of Theorem \ref{theopde} to all the parameters in $U_{T}(x,t)$ may cause more unpredictable dynamics for the loss function value through network training, as depicted in Fig. \ref{fig:F2}.

\subsection{Zeldovich Equation}

The Zeldovich equation employed in the theory of combustion and detonation of gases as a method to study the propagation of a two-dimensional flame \cite{zeldovich1959theorv,gilding2004travelling} can be expressed in its simplest form by

\begin{equation}
\label{Zeldovich}
\left\lbrace \begin{split}
& u_{t}=u_{xx}+u^{2}\left( 1-u\right),\\
&\lim_{x\rightarrow -\infty} u(x,t) = 1,\\
&\lim_{x\rightarrow \infty} u(x,t) = 0, \\
& u(x,0)= u_{0}(x).
\end{split}\right.
\end{equation}
The unknown $u$ represents temperature, and the second term, $u^2(1-u)$, on the right-hand side of the differential equation corresponds to the heat generation by combustion. 

Equation $(\ref{Zeldovich})$ admits precisely one wavefront solution when considering a traveling speed greater than or equal to $\frac{1}{\sqrt{2}}$ and no such solution for any wave speed smaller than $\tfrac{1}{\sqrt{2}}$. More precisely, the exact solutions of Eq. $(\ref{Zeldovich})$ has been  explicitly found in multiple articles with, for example in \cite{kudryashov1993exact}, where considering the initial condition 
\begin{equation*}
u_{0}(x)=\frac{1}{1+e^{\frac{x}{\sqrt{2}}}}    
\end{equation*}
found that the solution of Eq. \ref{Zeldovich} is given by 
\begin{equation}
\label{Zelsol}
u(x,t)=\frac{1}{1+e^{\frac{x-vt}{\sqrt{2}}}},
\end{equation}
where $v$ represents the wavefront traveling speed, which in this particular example is $v=\tfrac{1}{\sqrt{2}}$.

Table \ref{tabZel} presents the results obtained with three hidden layers and 40 neurons per layer to approximate the solution to Eq. $(\ref{Zeldovich})$ using the uniform CS and SS algorithms. The table lists the minimum value of the loss function with different domain sizes.

\begin{center}
\begin{tabular}{c c c c}
\hline
Domain & Epoch & Classical sampling & Stratified sampling\\
\hline
$\left[ -25,175\right]\times \left[ 0,80\right]$ & 20,000 & $1.43\times 10^{-5}$ & $2.29\times 10^{-6}$ \\
$\left[ -250,1750\right]\times \left[ 0,800\right]$ & 60,000 & $9.16\times 10^{-7}$ & $5.81\times 10^{-6}$  \\
$\left[ -2500,17500\right]\times \left[ 0,8000\right]$ & 240,000 & $2.47\times 10^{-10}$ & $1.13\times 10^{-7}$ \\
\hline
\end{tabular}
\captionof{table}{Minimum loss function value for the Zeldovich equation (Eq. $(\ref{Zeldovich})$) with various domain sizes $\left[ x_{L},x_{R}\right]\times \left[ t_{0},t_{F}\right]$. The network for this equation consists of three hidden layers with 40 neurons each.}
\label{tabZel}
\end{center}

Similar to the observations during the loss function analysis for the Fisher equation, Table \ref{tabZel} lists smaller values of the loss function when using CS instead of SS. However, Fig. \ref{fig:Zel_Loss} reveals that, even when the CS techniques produce significantly smaller loss function values than the SS techniques when plotting both approximations and comparing them with the exact solution given by Eq. $(\ref{Zelsol})$, SS approximates the analytical solution better than CS. On average, $200,000$ epochs were required to obtain an acceptable approximation of the exact solution of Eq. $(\ref{Zeldovich})$ in the domain $\left[ -2500,17500\right]\times \left[ 0,8000\right]$ using SS. However, this is not the case using CS techniques, as indicated in Fig. \ref{fig:Zel}, which presents the exact and trial solutions obtained using both methods. When using CS methods over the same domain, on average, more than $600,000$ epochs were needed to obtain an acceptable approximation of the solution to Eq. $(\ref{Zeldovich})$ given by Eq. $(\ref{Zelsol})$.

Figure \ref{fig:Zel} illustrates the trial solutions obtained with the uniform CS method (Fig. \ref{fig:Z1}) and the SS method (Fig. \ref{fig:Z2}). Figure \ref{fig:Z3} depicts the exact solution. For the results, the domain size is $\Omega_{x}\times\Omega_{t}=[-2500,17500]\times [0,8000]$ and the solutions were obtained in $240,000$ epochs. The trial solution with the proposed SS method yields an accurate traveling wave solution that is close to the exact solution. The traveling wave speed with the uniform CS method is greater than that of the exact solution.\\

\begin{figure}[h]
\centering
\begin{subfigure}{.33\textwidth}
  \centering
  \includegraphics[scale=.56]{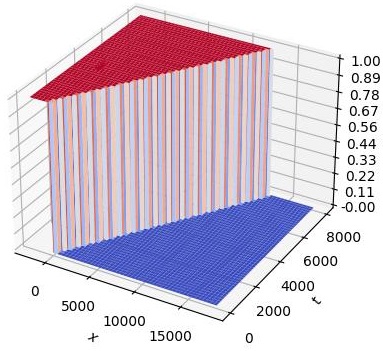}
  \caption{Trial solution using CS.}
  \label{fig:Z1}
\end{subfigure}%
\begin{subfigure}{.33\textwidth}
  \centering
  \includegraphics[scale=.56]{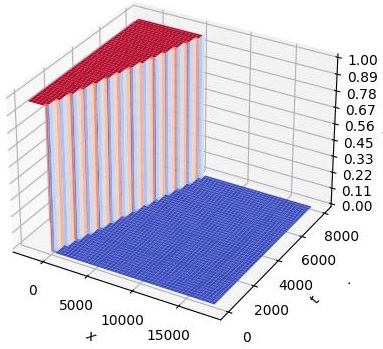}
  \caption{Trial solution using SS.}
  \label{fig:Z2}%
\end{subfigure}
\begin{subfigure}{.33\textwidth}
  \centering
  \includegraphics[scale=.56]{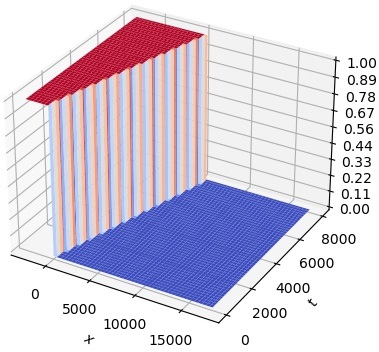}
  \caption{Exact solution.}
  \label{fig:Z3}
\end{subfigure}
\caption{Trial solutions obtained using the (a) classical uniform sampling method and (b) stratified sampling method after 240,000 epochs. (c) Exact solution
to the Zeldovich equation $(\ref{Zeldovich})$ 
over the domain $\Omega_{x}\times\Omega_{t}=[-2500,17500]\times [0,8000]$, given by Eq. $(\ref{Zelsol})$.}
\label{fig:Zel}
\end{figure}

Figure \ref{fig:Zel_IC} presents the initial condition and its approximation with the CS (Fig. \ref{fig:Zi5}) and SS (Fig. \ref{fig:Zi6}) methods, including a close view of these approximations of the initial condition. The dotted red line represents the exact initial condition, and the solid blue line approximates it. Both methods provide a good approximation for most of the domain of the PDE, only separating from the exact solution by a nonnegligible distance within the interval $(-20, 20)$. However, the proposed SS method yields a sufficient approximation of the initial condition compared to CS.\\

\begin{figure}[h]
\centering
\begin{subfigure}{.5\textwidth}
  \centering
  \includegraphics[scale=.56]{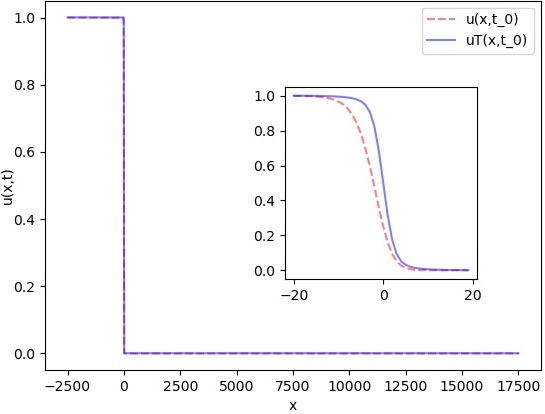}
  \caption{Initial condition using CS.}
  \label{fig:Zi5}
\end{subfigure}%
\begin{subfigure}{.5\textwidth}
  \centering
  \includegraphics[scale=.56]{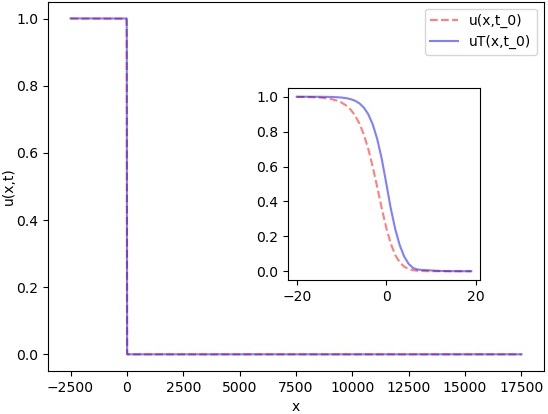}
  \caption{Initial condition using SS.}
  \label{fig:Zi6}
\end{subfigure}
\caption{Approximations of the initial condition of the Zeldovich equation (Eq. $(\ref{Zeldovich})$) over the domain $\Omega_{x}\times\Omega_{t}=[-2500,17500]\times [0,8000]$ obtained using the (a) classic sampling method and (b) stratified sampling method after 240,000 epochs. }
\label{fig:Zel_IC}
\end{figure}

Figure \ref{fig:Zel_Loss} illustrates the dynamic that the loss function value presents during the training progress using the CS (Fig. \ref{fig:Z5}) and SS (Fig. \ref{fig:Z6}) methods. The loss function fluctuates more with the CS techniques than the SS technique. Figure \ref{fig:Z7} depicts the corresponding MSE with respect to Eq. $(\ref{Zelsol})$ for the CS (dotted red line) and SS (solid blue line) methods. In the figure, the proposed SS method yields better convergence behavior than the CS method. Figure \ref{fig:z8} indicates how many sampling points were selected using SS as the training progressed, maintaining the same point density as the CS method, which used $80,000$ points. Figure \ref{fig:Zel_Loss} shows similar results to Fig. \ref{fig:Fish_loss}, this is because, like the Fisher equation, the Zeldovich equation does not meet the requirements of Corollary \ref{corpde}.

\begin{figure}[h]
\centering
\begin{subfigure}{.5\textwidth}
  \centering
  \includegraphics[scale=.55]{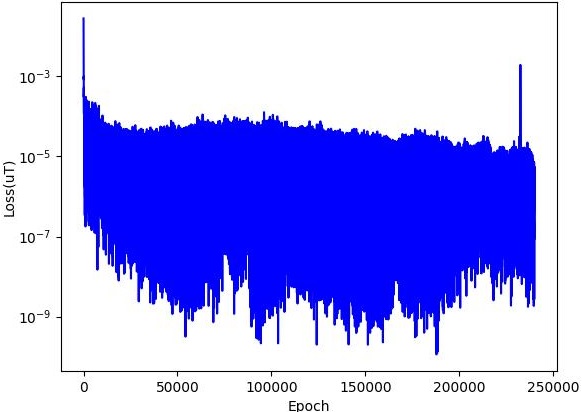}
  \caption{Initial condition using CS.}
  \label{fig:Z5}
\end{subfigure}%
\begin{subfigure}{.5\textwidth}
  \centering
  \includegraphics[scale=.55]{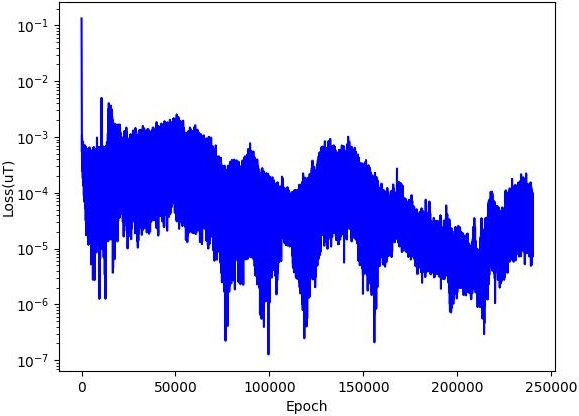}
  \caption{Initial condition using SS.}
  \label{fig:Z6}
\end{subfigure}
\begin{subfigure}{.5\textwidth}
  \centering
  \includegraphics[scale=.55]{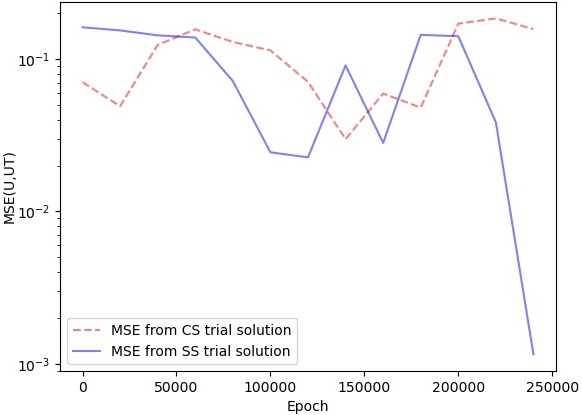}
  \caption{MSE respect to the exact solution.}
  \label{fig:Z7}
\end{subfigure}%
\begin{subfigure}{.5\textwidth}
  \centering
  \includegraphics[scale=.55]{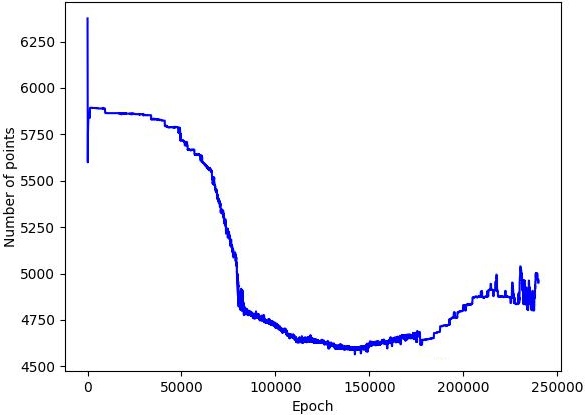}
  \caption{Sample set size using SS.}
  \label{fig:z8}
\end{subfigure}
\caption{Loss function versus epochs with the (a) classical sampling (CS) method and (b) stratified sampling (SS) method. (c) Mean squared error (MSE) for the exact solution (Eq. $(\ref{Zelsol})$) versus the epochs for the CS (dotted red line) and SS (solid blue line) methods. (d) Number of points selected versus epochs using the SS method. Both methods were used to approximate the solution of the Zeldovich equation (Eq. $(\ref{Zeldovich})$) over the domain $\Omega_{x}\times\Omega_{t}=[-2500,17500]\times [0,8000]$ after 240,000 epochs.}
\label{fig:Zel_Loss}
\end{figure}


\section{Conclusions}

This paper proposes a stratified sampling algorithm to enhance the training process of machine learning models in solving two-scale PDEs. This algorithm divides the domain of the PDE into two scales based on the notion of active and diminishing gradient zones of the trial solution that consider only the weights and biases in the first hidden layer of the neural network. The stratified sampling algorithm was used, splitting training into two stages. During the first stage, the algorithm was implemented to create a sample within the domain for the initial condition, and in the second stage, a sample was created considering the entire domain of the PDE. This approach was taken to employ a nonrandom initial configuration of the weights and biases in the network when starting the training for the trial solution in the entire domain of the PDE. The initial configuration provides a good approximation of the initial condition and a small loss function value. The numerical examples in Section 5 provide better approximations of the solutions to different PDEs when using the proposed stratified sampling algorithm than those obtained using the classical sampling method.

The idea behind the proposed technique is based on Theorem \ref{theopde} and Corollary \ref{corpde}, guaranteeing a decrease in the gradient of the loss function, considering the points within the diminishing gradient zone of a trial solution for the gradient obtained using a sample within the active gradient zone of the same trial solution. The proof of these results relies on the properties of the trial solution within the regions considered the active and diminishing gradient zones that consider the weights and biases of the first hidden layer only, obtained from Theorem \ref{Coric} and Corollary \ref{Cortheo1}.

The results of this work were obtained considering that the solution domain is part of $\mathbb{R}^{2}$ but can be extended to solutions whose domains are within $\mathbb{R}^{n}$. The results were achieved by considering a neural network whose input is in $\mathbb{R}^{n}$ (e.g., $\left( x_{1},\ldots,x_{n-1},t\right)$) as in the general case in Eq. (\ref{NNs}) and performing the same analysis for each $x_{i}$ as that performed for $x\in\mathbb{R}$ throughout this paper.


\section{Acknowledments}

\noindent The authors would like to acknowledge ACARUS from the Universidad de Sonora, for providing their support and facilities during the numerical computations. This work was supported by NRF of Korea under the grant number 2021R1A2C3009648 and NRF grant number NRF2021R1A6A1A1004294412. JHJ was also supported partially by NRF grant by the Korea government (MSIT) (RS-2023-00219980).



\appendix


\section{Pseudocodes}
\label{AppendixA}

Algorithm \ref{Algorithm:IC} shows the pseudo-code used to create the sample used during the optimization process for the verification of the initial condition. 


\begin{algorithm}
\caption{Sample selection algorithm used to train the initial condition.}
\label{Algorithm:IC}
\DontPrintSemicolon
\KwIn{Trial solution $U_{T} {(} x,t {)}$ and desire average density of points per square unit $d_{p}$}
\KwOut{Sample set $S_{IC}\in\left[ x_{L},x_{R}\right]$ generated over the active gradient zone of $U_{0} {(} x {)}$, with an average density of points per square unit of approximately $d_{p}$.}
\SetKwBlock{Begin}{function}{end function}
\Begin($\textrm{S\_{IC}} {(} U_{T} {(} x,t {)} ,d_{p}, \epsilon{)}$){
    \ \ \For{$j\in \left\lbrace 1,\ldots ,h_{1} \right\rbrace$}{
        \ \ $C^{(j)} \leftarrow -\dfrac{\left( t_{0}\omega_{1}^{(j,2)}+b_{1}^{(j)}\right)}{\omega_{1}^{(j,1)}}$\\
        $\delta^{(j)} \leftarrow \dfrac{\epsilon}{\omega_{1}^{(j,1)}}$\\
        $\left\langle x_{L}^{(j)},x_{R}^{(j)}\right\rangle \leftarrow \left\langle\max{\left\lbrace x_{L}, C^{(j)}-\delta^{(j)}\right\rbrace},\min{\left\lbrace x_{R}, C^{(j)}+\delta^{(j)}\right\rbrace}\right\rangle$
    }
    \ \ $\left\lbrace\left\langle x_{L}^{\left( j^{\prime}\right)},x_{R}^{\left( j^{\prime}\right)}\right\rangle\right\rbrace_{j^{\prime}=1}^{h_{1}} \leftarrow \mathrm{sort}_{x_{L}^{(j)}}\left( \left\lbrace\left\langle x_{L}^{(j)},x_{R}^{(j)}\right\rangle\right\rbrace_{j=1}^{h_{1}}\right)$\\
    $j^{\prime} \leftarrow 1$\\
    \While{$j^{\prime}\leq h_{1}$}{  
        \ \ $k \leftarrow 0$\\
        $h_{1}^{\prime} \leftarrow 0$\\
        \While{$k \leq L_{1}-j$ \textbf{{\normalfont\textbf{and}}} $x_{R}^{\left( j^{\prime}\right)} \geq x_{L}^{\left( j^{\prime}+k+1\right)}$}{
            \ \ $k \leftarrow k+1$
        }
        \ \ $n_{j^{\prime}} \leftarrow \left\lceil d_{p}\left( \max{\left\lbrace x_{R,i}^{\left( j^{\prime}\right)},x_{R,i}^{\left( j^{\prime}+k\right)}\right\rbrace} - x_{L,i}^{\left( j^{\prime}\right)}\right)\right\rceil$\\
        $S^{\left( h_{1}^{\prime}+1\right)} \leftarrow \left\lbrace (x_{l},t_{0})\right\rbrace_{l=1}^{n_{j^{\prime}}} \sim U\left(\left( x_{L}^{\left( j^{\prime}\right)},\max{\left\lbrace x_{R}^{\left( j^{\prime}\right)},x_{R}^{\left( j^{\prime}+k\right)}\right\rbrace}\right)\times\lbrace t_{0}\rbrace\right)$\\
        $j^{\prime} \leftarrow j^{\prime}+k+1$\\
        $h_{1}^{\prime} \leftarrow h_{1}^{\prime}+1$
    }
    \ \ $S_{IC} \leftarrow \bigcup_{j=1}^{h_{1}^{\prime}} S^{\left( j^{\prime}\right)}$\\
    \Return{$S_{IC}$}
}
\end{algorithm}

\newpage

Algorithm \ref{Algorithm:PDE} shows the pseudo-code used to create the sample used during the optimization process for the verification of the differential equation.

\begin{algorithm}
\caption{Sample selection algorithm used to train the PDE.}
\label{Algorithm:PDE}
\DontPrintSemicolon
\KwIn{Trial solution $U_{T} {(} x,t {)}$ and desire average density of points per square unit $d_{p}$}
\KwOut{Sample set $S_{PDE}\in\left( x_{L},x_{R}\right)\times\left( t_{0},t_{F}\right)$ generated over the active gradient zone of $U_{T} {(} x,t {)}$, with an average density of points per square unit of approximately $d_{p}$.}
\SetKwBlock{Begin}{function}{end function}
\Begin($\textrm{S\_PDE} {(} U_{T} {(} x,t {)} ,d_{p},\epsilon, N {)}$){
    \ \ $\Delta_{t} \leftarrow \dfrac{(t_{F}-t_{0})}{N}$\\
    \For{$j\in \left\lbrace1,\ldots ,h_{1}\right\rbrace$}{
        \ \ $\delta^{(j)} \leftarrow \dfrac{\epsilon}{\omega_{1}^{(j,1)}}$
    }
    \ \ \For{$i\in\left\lbrace 0,\ldots ,N-1\right\rbrace$}{
        \ \ $t_{i} \leftarrow  t_{0} + i\Delta_{t}$\\
        $t_{i+1} \leftarrow  t_{i} + \Delta_{t}$\\
        \For{$j\in \left\lbrace1,\ldots ,h_{1}\right\rbrace$}{
            \ \ $C_{i}^{(j)} \leftarrow -\dfrac{\left( t_{i}\omega_{1}^{(j,2)}+b_{1}^{(j)}\right)}{\omega_{1}^{(j,1)}}$\\
            $C_{i+1}^{(j)} \leftarrow -\dfrac{\left( t_{i+1}\omega_{1}^{(j,2)}+b_{1}^{(j)}\right)}{\omega_{1}^{(j,1)}}$\\
            $x_{L,i}^{(j)} \leftarrow \min{ \left\lbrace C_{i}^{(j)}-\delta^{(j)}, C_{i+1}^{(j)}-\delta^{(j)} \right\rbrace}$\\
            $x_{R,i}^{(j)} \leftarrow \max{ \left\lbrace C_{i}^{(j)}+\delta^{(j)}, C_{i+1}^{(j)}+\delta^{(j)} \right\rbrace}$\\
            $\left\langle x_{L,i}^{(j)},x_{R,i}^{(j)}\right\rangle \leftarrow \left\langle\max{\left\lbrace x_{L}, x_{L,i}^{(j)}\right\rbrace},\min{\left\lbrace x_{R}, x_{R,i}^{(j)}\right\rbrace}\right\rangle$
        }
        \ \ $\left\lbrace\left\langle x_{L,i}^{\left( j^{\prime}\right)},x_{R,i}^{\left( j^{\prime}\right)}\right\rangle\right\rbrace_{j^{\prime}=1}^{h_{1}} \leftarrow \mathrm{sort}_{x_{L,i}^{(j)}}\left( \left\lbrace\left\langle x_{L,i}^{(j)},x_{R,i}^{(j)}\right\rangle\right\rbrace_{j=1}^{h_{1}}\right)$\\
        $j^{\prime} \leftarrow 1$\\
        \While{$j^{\prime}\leq h_{1}$}{  
            \ \ $k \leftarrow 0$\\
            $h_{1,i}^{\prime} \leftarrow 0$\\
            \While{$k \leq L_{1}-j$ \textbf{{\normalfont\textbf{and}}} $x_{R,i}^{\left( j^{\prime}\right)} \geq x_{L,i}^{\left( j^{\prime}+k+1\right)}$}{
                \ \ $k \leftarrow k+1$
            }
            \ \ $n_{j^{\prime}} \leftarrow \left\lceil d_{p}\left( \max{\left\lbrace x_{R,i}^{\left( j^{\prime}\right)},x_{R,i}^{\left( j^{\prime}+k\right)}\right\rbrace} - x_{L,i}^{\left( j^{\prime}\right)}\right)\Delta_{t}\right\rceil$\\
            $S_{i}^{\left( h_{1,i}^{\prime}+1\right)} \leftarrow \left\lbrace \left( x_{l,i}, t_{l,i}\right)\right\rbrace_{l=1}^{n_{j^{\prime}}} \sim U\left(\left( x_{L,i}^{\left( j^{\prime}\right)},\max{\left\lbrace x_{R,i}^{\left( j^{\prime}\right)},x_{R,i}^{\left( j^{\prime}+k\right)}\right\rbrace}\right)\times\left( t_{i}, t_{i+1} \right]\right)$\\
            $j^{\prime} \leftarrow j^{\prime}+k+1$\\
            $h_{1,i}^{\prime} \leftarrow h_{1,i}^{\prime}+1$
        }
    }
    \ \ $S_{PDE} \leftarrow \bigcup_{i=1}^{N}\bigcup_{j=1}^{h_{1,i}^{\prime}} S_{i}^{\left( j\right)}$\\
    \Return{$S_{PDE}$}
}
\end{algorithm}





\bibliographystyle{elsarticle-num}
\bibliography{cas-refs}






\end{document}